\newcommand{\rrVert}{\Vert}
\newcommand{\rrvert}{\vert}
\newcommand{\llVert}{\Vert}
\newcommand{\llvert}{\vert}
\newcommand{\eqref}[1]{(\ref{#1})}
\newtheorem{theorem}{Theorem}[section]
\newtheorem{lemma}[theorem]{Lemma}
\newtheorem{proposition}[theorem]{Proposition}
\newcommand{\tta}{\theta}
\newcommand{\gL}{\Lambda}
\def\sig{\sigma}
\def\Om{\Omega}
\def\vphi{\varphi}
\renewcommand{\div}{\mathrm{div}}
\newcommand{\bE}{\mathbf{E}}
\newcommand{\bP}{\mathbf{P}}
\newcommand{\bR}{\mathbf{R}}
\newcommand{\bS}{\mathbf{S}}
\newcommand{\bZ}{\mathbf{Z}}
\newcommand{\bbT}{\mathbb{T}}
\newcommand{\cC}{\mathcal{C}}
\newcommand{\cD}{\mathcal{D}}
\newcommand{\cE}{\mathcal{E}}
\newcommand{\cI}{\mathcal{I}}
\newcommand{\cJ}{\mathcal{J}}
\newcommand{\cM}{\mathcal{M}}
\newcommand{\cX}{\mathcal{X}}
\newcommand{\tc}{\tilde{c}}
\newcommand{\tp}{\tilde{p}}
\newcommand{\tv}{\tilde{v}}
\newcommand{\tA}{\tilde{A}}
\begin{document}
\begin{frontmatter}

\title{Mean field limit for disordered diffusions with singular
interactions\thanksref{T2}}
\runtitle{Diffusions with singular interactions}

\begin{aug}
\author[a]{\fnms{Eric} \snm{Lu\c con}\corref{}\ead[label=e1]{eric.lucon@parisdescartes.fr}}
\and
\author[b]{\fnms{Wilhelm} \snm{Stannat}\ead[label=e2]{stannat@math.tu-berlin.de}}

\thankstext{T2}{Supported by the BMBF, FKZ 01GQ 1001B.}
\runauthor{E. Lu\c con and W. Stannat}

\affiliation{Technische Universit\"at, Berlin and  Bernstein Center\break for Computational Neuroscience}

\address[a]{Institut f\"ur Mathematik\\
Technische Universit\"at Berlin\\
Stra{\ss}e des 17. Juni 136\\
D-10623 Berlin\\
Germany\\
and\\
Bernstein Center for\\
\quad Computational Neuroscience\\
Philippstr. 13\\
D-10115 Berlin\\
Germany\\
Current address:\\
Laboratoire MAP5\\
Universit\'e Paris Descartes\\
45 rue des Saints P\`eres\\
75270 Paris Cedex 06\\
France\\
\printead{e1}}

\address[b]{Institut f\"ur Mathematik\\
Technische Universit\"at Berlin\\
Stra{\ss}e des 17. Juni 136\\
D-10623 Berlin\\
Germany\\
and\\
Bernstein Center for\\
\quad Computational Neuroscience\\
Philippstr. 13\\
D-10115 Berlin\\
Germany\\
\printead{e2}}

\end{aug}

\received{\smonth{1} \syear{2013}}
\revised{\smonth{6} \syear{2013}}

%
\begin{abstract}
Motivated by considerations from neuroscience (macroscopic behavior of
large ensembles of interacting neurons), we consider a population of
mean field interacting diffusions in $\bR^{m}$ in the presence of a
random environment and with spatial extension: each diffusion is
attached to one site of the lattice $\bZ^{d}$, and the interaction
between two diffusions is attenuated by a spatial weight that depends
on their positions. For a general class of singular weights (including
the case already considered in the physical literature when
interactions obey to a power-law of parameter $0<\alpha<d$), we address
the convergence as $N\to\infty$ of the empirical measure of the
diffusions to the solution of a deterministic McKean--Vlasov equation
and prove well-posedness of this equation, even in the degenerate case
without noise. We provide also precise estimates of the speed of this
convergence, in terms of an appropriate weighted Wasserstein distance,
exhibiting in particular nontrivial fluctuations in the power-law case
when $\frac{d}{2}\leq\alpha<d$. Our framework covers the case of
polynomially bounded monotone dynamics that are especially encountered
in the main models of neural oscillators.
\end{abstract}

%
\begin{keyword}[class=AMS]
\kwd[Primary ]{60K35}
\kwd{60G57}
\kwd[; secondary ]{60F15}
\kwd{35Q92}
\kwd{82C44}
\end{keyword}

\begin{keyword}
\kwd{Disordered models}
\kwd{weakly interacting diffusions}
\kwd{Wasserstein distance}
\kwd{spatially extended particle systems}
\kwd{dissipative systems}
\kwd{Kuramoto model}
\kwd{FitzHugh--Nagumo model}
\end{keyword}

\end{frontmatter}

\section{Introduction}\label{sec1}
The purpose of this paper is to provide a general convergence result
for the empirical distribution of spatially extended networks of mean
field coupled diffusions in a random environment. The main novelty of
the paper is to consider a family of interacting diffusions indexed by
the box $\gL_{N}:= [\![-N, \ldots, N]\!]^{d}$ of volume
$|\gL
_{N}|:=(2N+1)^{d}$ in the $d$-dimensional lattice $\bZ^{d}$ ($d\geq1$)
where the interaction between two diffusions in $\gL_{N}$ depends on
their relative positions. We are in particular interested in diffusions
modeling the spiking activity of neurons in a noisy environment. To
motivate the mathematical model we want to work with, let us consider,
as a particular example, a family of stochastic FitzHugh--Nagumo
neurons (see \cite{22657695,MR2674516} and references therein for
further neurophysiological insights on the model)
%
%
\begin{equation}
\label{eq:FHNintro} %
\cases{\displaystyle \mathrm{d}V_{i}(t) =
\biggl(V_{i}(t)-\frac{V_{i}(t)^{3}}{3} -w_{i}(t)+I \biggr)\,
\mathrm{d}t + \sig_{V} \,\mathrm{d}B_{i}^{V}(t),
\vspace*{2pt}
\cr
\displaystyle\mathrm{d}w_{i}(t) = \bigl(a_{i}
\bigl(b_{i} V_{i}(t)-w_{i}(t)\bigr) \bigr)\,
\mathrm{d}t + \sig _{w} \,\mathrm{d}B_{i}^{w}(t) }
\end{equation}
for $i\in\gL_{N}$, with exterior input current $I$. The variable
$V_{i}(t)$ denotes the voltage activity of the neuron, and $w_{i}(t)$
plays the role of a recovery variable. $  (B_{i}^{V}(t),
B_{i}^{w}(t) )$ are independent Brownian motions modeling exterior
stochastic forces. Depending on the parameters $(a_{i}, b_{i})\in\bR
^{2}$, the neurons exhibit an oscillatory, excitable or inhibitory
behavior. Suppose that the precise values of $\omega_{i}=(a_{i}, b_{i})$
are unknown, which will always be the case in real-world applications,
but rather are given as independent and identically distributed random
variables. From a point of view from statistical physics, this
additional randomness in \eqref{eq:FHNintro} may be considered as a
\emph{disorder}. For simplicity we suppose that the $\omega_{i}$ are
independent of the time $t$. Equation \eqref{eq:FHNintro} can be
written as
%
%
\begin{equation}
\mathrm{d}\tta_{i}(t)= c(\tta_{i}, \omega_{i})
\,\mathrm{d}t + \sig \cdot\mathrm{d} B_{i}(t),\qquad t\geq0, i\in
\gL_{N},
\end{equation}
using the shorthand notation $\tta=  (V, w )$, $\omega=(a,b)$,
$c(\tta, \omega) =  (V-\frac{V^{3}}{3} -w+I, a (b V-w) )$,
$B=
(B^{V}, B^{w} )$ and $\sig=
\bigl({
{\sig_{V} \atop 0}\enskip {0\atop\sig_{w}
}}\bigr)
$.
We suppose that the individual neurons are coupled with the help of a
possibly nonlinear and random coupling term $\Gamma (\tta_{i},
\omega
_{i}, \tta_{j}, \omega_{j} )$ ($i,j\in\gL_{N}$) modeling electrical
synapses between the neurons. The coupling intensity between neurons
$i$ and $j$ will depend additionally on some weight $\Psi_{N}(i, j)$
($\Psi_{N}$ may be thought as a function of the distance, but not
necessarily), so that the resulting system gets the following type:
%
%
\begin{eqnarray}
\label{eq:systintro} \mathrm{d}\tta_{i}(t)&=& c\bigl(\tta_{i}(t),
\omega_{i}\bigr)\,\mathrm{d}t
\nonumber\\
&&{}+ \frac{1}{|\gL
_{N}|}\sum_{j\in\gL_{N}} \Gamma \bigl(
\tta_{i}(t), \omega_{i}, \tta _{j}(t),
\omega_{j} \bigr) \Psi_{N}(i, j) \,\mathrm{d}t + \sig\cdot
\mathrm{d}B_{i}(t),\\
 \eqntext{t\geq0, i\in\gL_{N}.}
\end{eqnarray}
The purpose of the paper is to address the behavior of system \eqref
{eq:systintro} in large populations ($N\to\infty$), under general
assumptions on the dynamics $c$, the coupling $\Gamma$ and the spatial
constraint $\Psi_{N}$.

\subsection{Empirical measure and mean-field limit}
All the statistical information of the neural ensemble is contained in
its empirical distribution\break  of the diffusions $\tta_{j}$ (with disorder
$\omega_{j}$ and with renormalized position\vadjust{\goodbreak}\break  $x_{j}:= \frac{1}{2N}\in
[- \frac{1}{2}, \frac{1}{2} ]^{d}$)
%
%
\begin{equation}
\label{eq:nuNintro} \nu^{(N)}_{t}(\mathrm{d}\tta, \mathrm{d}
\omega, \mathrm{d}x):= \frac{1}{ \llvert
\gL_{N}
\rrvert } \sum_{j\in\gL_{N}}
\delta_{(\tta_{i}(t), \omega_{i},
x_{j})}(\mathrm{d}\tta, \mathrm{d}\omega, \mathrm{d}x),\qquad t\geq0
\end{equation}
that can be seen as a random probability measure.
%
%
\begin{remark}
\label{rem:renormalization}
The renormalization of the positions by $ \frac{1}{2N}$ maps $\gL
_{N}=[\![-N, \ldots, N]\!]^{d}$ to a discrete subset of
$
[- \frac{1}{2}, \frac{1}{2} ]^{d}$. The necessity of this
renormalization will become clear in the discussion on the spatial
constraints below in this \hyperref[sec1]{Introduction}.
\end{remark}
Since we are interested in the collective behavior of a large numbers
of neurons, as it is the case for neural ensembles in the brain,
understanding the asymptotic behavior of $\nu_{t}^{(N)}$ as $N\to
\infty
$ is important.

Under the assumption that
%
%
\begin{equation}
\label{eq:defPsi} \Psi_{N}(i, j)= \Psi \biggl(\frac{i}{2N},
\frac{j}{2N} \biggr)
\end{equation}
for a general class of functions $\Psi$ defined on $ [- \frac{1}{2},
\frac{1}{2} ]^{d} \times [- \frac{1}{2}, \frac{1}{2} ]^{d}$,
we prove, as part of our main results in this paper (see Theorems \ref
{theo:LLNPnn} and \ref{theo:LLNpowerinfd}), that $\nu_{t}^{(N)}$
converges to a deterministic measure $\nu_{t}(\mathrm{d}\tta,
\mathrm{d}\omega,
\mathrm{d}x)=
q_{t}(\tta, \omega, x)\,\mathrm{d}\tta\mu(\mathrm{d}\omega)\,\mathrm
{d}x$ where
$q_{t}$ is a weak
solution of the McKean--Vlasov equation
%
%
\begin{eqnarray}
\label{eq:MKVintro} \partial_{t}q_{t} &=& \frac{1}{2}
\div_{\tta} \bigl(\sig\sig ^{T}\nabla _{\tta}
q_{t} \bigr)\nonumber\\
&&{}- \div_{\tta} \biggl(q_{t} \biggl\{c(
\tta, \omega)\\
&&\hspace*{53pt}{} + \int\Gamma(\tta, \omega, \bar\tta,\bar\omega) \Psi(x, \bar
x)q_{t}(\bar\tta, \bar\omega, \bar x)\,\mathrm{d}\bar\tta\,\mathrm{d}
\mu(\bar\omega )\,\mathrm{d}\bar x \biggr\} \biggr).\nonumber
\end{eqnarray}
For a formal derivation of this equation, we refer to the end of
Section~\ref{subsec:MKV} below. The measure $\nu_{t}$ is called the
mean field limit of the system \eqref{eq:systintro}. Through
Theorems \ref{theo:LLNPnn} and \ref{theo:LLNpowerinfd}, we not only
prove the convergence $\nu^{(N)}_{t}$ toward $\nu_{t}$, but we also
provide some explicit estimates on the speed of convergence in terms of
an appropriate weighted Wasserstein distance.

\subsection{Existing literature and motivations}
\subsubsection{The nonspatial case: \texorpdfstring{$\Psi_{N}\equiv1$}{PsiNequiv1}}Of course,
since there is no spatial interaction in this case, indexing the
diffusions by a subset of $\bZ^{d}$ is not relevant.
Systems of type \eqref{eq:systintro} are called \emph{mean field
models} (or \emph{weakly interacting diffusions}) in statistical
physics and have attracted much attention in the past years (see, e.g.,
\cite{McKean1967,Gartner,Oelsch1984,Sznit84,daiPra96}), since they
are capable of modeling complex dynamical behavior of various types
of\vadjust{\goodbreak}
real-world models from physics to biology, like, for example,
synchronization of large populations of individuals, collective
behavior of social insects, emergence of synchrony in neural networks
\cite{22657695,1108.2414,MR2998591,1211.0299} and providing
particle approximations for various nonlinear PDEs appearing in physics
\cite{MR1410117,MR2834721,MR2731396,Malrieu2003,MR2280433}.

The most prominent example of such models is the Kuramoto model, which
has been widely considered in the literature as the main prototype for
synchronization phenomena (see, e.g., \cite
{Acebron2005,Lucon2011,1209.4537,GPP2012,Strogatz1991}),
%
%
\begin{eqnarray}
\label{eq:Kurintro} \mathrm{d}\tta_{i}(t)= \omega_{i} \,
\mathrm{d}t + \frac{K}{N}\sum_{j=1}^{N}
\sin (\tta _{j}-\tta_{i} )\,\mathrm{d}t + \sig\,
\mathrm{d}B_{i}(t),
\nonumber
\\[-8pt]
\\[-8pt]
\eqntext{t\geq0, i=1,\ldots, N,}
\end{eqnarray}
where $K\geq0$ is the intensity of interaction and $\tta_{i}\in\bS
:=\bR
/2\pi$.

In the context of weighted interactions, a notable attempt to go beyond
pure mean field interactions has been to consider \emph{moderately
interacting diffusions}; see \cite{MR779460,Meleard1987,Jourdain1998}.

\subsubsection{The spatial case}
\label{subsubsec:introspatialcase}
The motivation of going beyond pure mean-field interaction comes from
the biological observation that neurons do not interact in a mean-field
way (see, e.g., \cite{PhysRevLett.110.118101} and references therein),
and a vast literature exists in physics about synchronization on
general networks. In particular, several papers have already considered
model \eqref{eq:systintro} (in dimension $d=1$) for different choices
of spatial weight $\Psi$ defined in \eqref{eq:defPsi}. In this paper,
we will be more particularly interested in two classes of spatial weights:
\begin{longlist}[(1)]
\item[(1)] \textit{The $P$-nearest-neighbor model}: this model (see
\cite{PhysRevLett.106.234102,PhysRevE.85.026212}) concerns the case
where each diffusion $\tta_{i}\in\gL_{N}$ only interacts with its
neighbors within a box $\gL_{P}\subseteq\gL_{N}$, where $P$ is smaller
than $N$,
%
%
\begin{eqnarray}
\label{eq:defPnearestintro} \mathrm{d}\tta_{i}(t)= c(\tta_{i},
\omega_{i}) \,\mathrm{d}t + \frac
{1}{|\gL_{P}|} \mathop{\sum
_{j\in\gL_{P}}}_{ j\neq i} \Gamma (\tta_{i},
\omega_{i}, \tta _{j}, \omega_{j} ) \,
\mathrm{d}t+ \sig\cdot\mathrm {d}B_{i}(t),
\nonumber
\\[-8pt]
\\[-8pt]
 \eqntext{i\in\gL_{N}.}
\end{eqnarray}
We are concerned in this work with the case where $P$ is proportional
to $N$, that is,
%
%
\begin{equation}
\label{eq:PRNintro} P=RN
\end{equation}
for a fixed proportion $R\in(0, 1]$.
%
%
\begin{remark}
The case of $R=1$ corresponds to the mean field case.
Understanding the behavior of system \eqref{eq:defPnearestintro} in the
case of a pure local interaction (i.e., when $P\ll N$) does not enter
into the scope of this work. In particular, we will\vadjust{\goodbreak} not address the
question of $P$ of order smaller than $N$ (e.g., $P=RN^{\alpha}$ for
some $\alpha<1$), whose behavior as $N\to\infty$ seems to be quite different.
\end{remark}

Under assumption \eqref{eq:PRNintro}, the $P$-nearest-neighbor model
\eqref{eq:defPnearestintro} enters into the framework of \eqref
{eq:systintro} for the following choice of $\Psi$ in \eqref{eq:defPsi}:
%
%
\begin{equation}
\label{eq:weightnearest} \qquad\quad\forall x, y \in \biggl[- \frac{1}{2}, \frac{1}{2}
\biggr]^{d} \qquad\Psi (x, y):= \chi_{R}(x - y):=
\frac{1}{(2R)^{d}} \mathbh{1}_{[-R,
R]^{d}} (x-y ).
\end{equation}
\item[(2)] \textit{The power-law model}: this model also considered in
the physical literature (see \cite
{PhysRevE.82.016205,PhysRevE.85.066201,PhysRevE.66.011109,PhysRevE.54.R2193})
corresponds to the case where
$\Psi$ in \eqref{eq:defPsi} is given by
%
%
\begin{equation}
\label{eq:weightpower} \forall x, y \in \biggl[- \frac{1}{2}, \frac{1}{2}
\biggr]^{d}\qquad \Psi (x, y):= \frac{1}{\| x-y \|^{\alpha}}
\end{equation}
for some parameter $\alpha\geq0$, that is,
%
%
\begin{eqnarray}
\label{eq:defpowerintro} \mathrm{d}\tta_{i}(t)&=& c(\tta_{i},
\omega_{i}) \,\mathrm{d}t\nonumber \\
&&{}+ \frac
{1}{|\gL_{N}|} \mathop{\sum
_{j\in\gL_{N}}}_{ j\neq i} \Gamma (\tta_{i},
\omega_{i}, \tta _{j}, \omega_{j} ) \biggl\|
\frac{i-j}{2N} \biggr\|^{-\alpha} \,\mathrm{d}t+ \sig\cdot\mathrm{d}
B_{i}(t),\\
 \eqntext{i\in\gL_{N}.}
\end{eqnarray}

Note that the pure mean field case corresponds again to $\alpha=0$. As
observed in the articles mentioned above on the basis of numerical
simulations, it appears that the behavior of the system is strongly
dependent on the value of the parameter $\alpha$. The situation which
is considered in this paper corresponds to the subcritical case where
the parameter is smaller than the dimension
%
%
\begin{equation}
\label{hyp:alphaintro} \alpha<d.
\end{equation}
The case of $\alpha\geq d$ is much more delicate and will be the object
of future work. We refer to Remark~\ref{rem:alphageqd} below for
further explanations on this case.

It is easy to see that in the case of \eqref{hyp:alphaintro} the
renormalization of the positions by a factor $ \frac{1}{2N}$ in \eqref
{eq:defpowerintro} is necessary: by standard arguments, the diverging
series $\sum_{j\in\gL_{N}, j\neq i} \| i-j \|^{-\alpha}$ is of order
$N^{d-\alpha}$. Consequently, $ \frac{1}{ \llvert \gL_{N} \rrvert
}\sum_{j\in\gL_{N}, j\neq i}\| \frac {i-j}{2N} \|
^{-\alpha}$
is of order $ \frac{N^{\alpha}}{ \llvert \gL_{N} \rrvert }
N^{d-\alpha}= O(1)$, so that we should expect a nontrivial limit in
\eqref{eq:defpowerintro}, as $N\to\infty$.
\end{longlist}

\subsection{Main lines of proof and organization of the paper}
The strategy usually used in the literature on mean-field models (see
\cite{Gartner,Jourdain1998,Lucon2011,Oelsch1984}) for the
convergence\vspace*{1pt} of the empirical measure \eqref{eq:nuNintro} is the
following: first prove tightness of $(\nu^{(N)})_{N\geq1}$ in the set
of measure-valued continuous processes and second, prove uniqueness of
any possible limit points, that is, uniqueness in the McKean--Vlasov
equation \eqref{eq:MKVintro}.\vadjust{\goodbreak}

In our context, a priori uniqueness in \eqref{eq:MKVintro} appears
unclear, due the fact that our model includes \emph{singular spatial
weights} [discontinuous in \eqref{eq:weightnearest} and singular in
\eqref{eq:weightpower}] and also a class of dynamics with \emph{no
global-Lipschitz continuity} and \emph{polynomial growth}; recall the
FitzHugh--Nagumo case \eqref{eq:FHNintro}. Note that we are also
concerned with the case where $\sig$ is degenerate (even equally zero)
for which uniqueness in \eqref{eq:MKVintro} is also not clear.

To bypass this difficulty, we adopt a converse strategy: we first prove
\emph{existence} of a solution to the mean-field limit \eqref
{eq:MKVintro} (through an {ad-hoc} fixed point argument, using
ideas from Sznitman \cite{SznitSflour}). Second, via a propagator
method (see \cite{MR1741805} for related ideas), we prove the
convergence (with respect to a Wasserstein-like distance adapted to the
singularities of the interaction) of the empirical measure to \emph
{any} solution to \eqref{eq:MKVintro}. In particular, easy byproducts
of this method are \emph{uniqueness} of any solution to \eqref
{eq:MKVintro} as well as \emph{explicit rates of convergence} to the
McKean--Vlasov limit. In that sense, one of the main conclusions of the
paper is to exhibit a \emph{phase transition} in the size of the
fluctuations in the power-law case; see Theorem~\ref
{theo:LLNpowerinfd}. An actual central limit theorem in this case is of
course a natural perspective and is currently under investigation.

The paper is organized as follows: we give in Section~\ref{sec:results}
the main assumptions on the model and we state the main results
(Theorems \ref{theo:LLNPnn} and \ref{theo:LLNpowerinfd}).
Section~\ref{sec:nonlin} contains the proof of Proposition~\ref
{prop:existencenut}
concerning the existence of a solution to the McKean--Vlasov
equation \eqref{eq:MKVintro}. Section~\ref{sec:propagator} summarizes
the main ideas and results concerning the propagator method. The proofs
of the laws of large numbers are provided in Section~\ref{sec:Pnn} for
the $P$-nearest case and in Section~\ref{sec:PLinfd} for the power-law
case. An additional assumption of regularity is made from Section~\ref
{sec:propagator} to \ref{sec:PLinfd}, with is discarded in
Section~\ref{sec:loclip}.

\section{Mathematical set-up and main results}
\label{sec:results}
\subsection{The model}
Fix $N\geq1$, $T>0$, and let $\gL_{N}$ be the hypercube $[\![-N,
\ldots,\break   N]\!]^{d}\subset\bZ^{d}$ and $|\gL_{N}|=(2N+1)^{d}$
be its
volume. We consider $|\gL_{N}|$ diffusions on $[0, T]$ with values in
the state space\setcounter{footnote}{1}\footnote{Note that it is also possible to choose $\cX$
as the circle $\bS:= \bR/2\pi\bZ$ in the case of the Kuramoto model,
but we will stick to $\cX:=\bR^{m}$ for simplicity.} $\mathcal
{X}:=\bR
^{m}$ for a certain $m\geq1$.

Each diffusion $\tta_{i}$ is attached to the site $i$ of $\gL_{N}$. The
local dynamics of $\tta_{i}$ is governed by the following stochastic
differential equation which is perturbed by a random environment
represented by a vector $\omega_{i}\in\cE:=\bR^{n}$ ($n\geq1$):
%
%
\begin{equation}
\label{eq:singlepart} \mathrm{d}\tta_{i}(t)= c(\tta_{i},
\omega_{i}) \,\mathrm{d}t + \sig \cdot\mathrm{d} B_{i}(t),\qquad
0\leq t\leq T, i\in\gL_{N},
\end{equation}
where $\sig\in\bR^{m\times m}$ is the covariance matrix, $c(\cdot,
\cdot)$ is a function from $\mathcal{X}\times\cE$ to $\mathcal{X}$,
and $(B_{i})$ is a given sequence of independent Brownian motions in
$\cX$.

The vectors $(\omega_{i})_{i\in\gL_{N}}$ are supposed to be i.i.d.
realizations of a law $\mu$ and are hence seen as a random environment
for the diffusions.\vadjust{\goodbreak}

When connected to the others, the diffusions interact in a mean field
way with spatial extension,
%
%
\begin{eqnarray}
\label{eq:odegene} \mathrm{d}\tta_{i}(t)&=& c(\tta_{i},
\omega_{i})\,\mathrm{d}t\nonumber\\
&&{} + \frac
{1}{|\gL
_{N}|}\mathop{\sum
_{j\in\gL_{N}}}_{ j\neq i} \Gamma (\tta _{i}, \omega
_{i}, \tta_{j}, \omega_{j} ) \Psi \biggl(
\frac{i}{2N}, \frac
{j}{2N} \biggr) \,\mathrm{d}t + \sig\cdot
\mathrm{d}B_{i}(t),
\\
\eqntext{0\leq t\leq T, i\in \gL_{N},}
\end{eqnarray}
where $\Gamma$ is a function from $(\mathcal{X}\times\cE)^{2}$ to
$\mathcal{X}$,
and $(x,y)\mapsto\Psi(x,y)$ is a function from $ [- \frac{1}{2},
\frac{1}{2} ]^{d}\times [- \frac{1}{2}, \frac{1}{2}
]^{d}$ to
$[0, \infty)$. The required assumptions for the function $\Psi$ will be
made precise in Assumption~\ref{ass:psi} below. One should notice at
this point that $\Psi(x, y)$ does not need to depend on the difference $x-y$.

We suppose that, at time $t=0$, the variables $(\tta_{i}(0))_{1\leq
i\leq N}$ are independent and identically distributed according to a
probability distribution $\zeta(\mathrm{d}\tta)$ on~$\cX$.

%
\begin{remark}
\label{rem:boundcond}
Instead of considering diffusions on $\gL_{N}$, we can also suppose
periodic boundary conditions, that is, when $\gL_{N}$ is replaced by
$\gL_{N, \mathrm{per}}:=\bbT_{N}^{d}$, where $\bbT_{N}$ is the discrete
$N$-torus, that is,
$[\![-N, \ldots, N]\!]$ with $-N$ and $N$ identified. The
only thing that changes in what follows in the continuous model is that
one should replace $ [- \frac{1}{2}, \frac{1}{2} ]^{d}$ by
$\bbT
^{d}$ where $\bbT:= [- \frac{1}{2}, \frac{1}{2} ]/_{(-{1}/{2})\sim{1}/{2}}$. Since the corresponding changes in the proofs
of this paper remain marginal, we will restrict to the non periodic
case and let the interested reader make the appropriate modifications
in the periodic case.
\end{remark}

\subsection{Notation and assumptions}
\label{subsec:assumptions}
From now on, we will suppose that the following assumptions
(Assumptions \ref{ass:Gammac}, \ref{ass:muzeta} and \ref{ass:psi}) are
satisfied throughout the paper. In particular, saying that
Assumption~\ref{ass:psi} is true means that we are either in the
$P$-nearest-neighbor case or in the power-law case; see hypotheses (H1) and (H2) below.
%
%
\begin{assmp}[(Hypothesis on $\Gamma$ and $c$)]
\label{ass:Gammac}
We make the following assumptions:
\begin{itemize}
\item
The function $(\tta, \omega)\mapsto c(\tta, \omega)$ is supposed
to be
locally Lipschitz-continuous in~$\tta$ (for fixed $\omega$) and
satisfy a
one-sided Lipschitz condition w.r.t. the two variables $(\tta, \omega)$,
%
%
\begin{equation}
\label{eq:cgrowthcond}\qquad \forall(\tta, \omega), (\bar\tta, \bar\omega)\qquad \bigl\langle\tta -
\bar\tta, c(\tta, \omega)-c(\bar\tta, \bar\omega)\bigr\rangle \leq L \bigl(\|
\tta-\bar \tta \|^{2} + \| \omega -\bar\omega \| ^{2} \bigr)
\end{equation}
for some constant $L$ (not necessarily positive). We suppose also some
polynomial bound about the function $c$,
%
%
\begin{equation}
\label{eq:polgrowthc} \forall(\tta, \omega)\qquad
 \bigl\| C(\tta, \omega) \bigr\|\leq|\!|\!|c |\!|\!|
\bigl(1+\| \tta \|^{\kappa} + \| \omega \|^{\iota} \bigr)
\end{equation}
for some constant $|\!|\!|c |\!|\!|>0$ and where $\kappa\geq2$ and
$\iota\geq1$.\vadjust{\goodbreak}
\item The interaction term $\Gamma$ is supposed to be bounded by
$\| \Gamma \|_{\infty}$ and globally Lipschitz-continuous on $
(\cX\times\cE
)^{2}$, with a Lipschitz constant $\| \Gamma \|_{\mathrm{Lip}}$.
\end{itemize}
We also assume that for fixed $\bar\tta, \omega, \bar\omega$,
the functions
$\tta\mapsto c(\tta, \omega)$ and $\tta\mapsto\Gamma(\tta,
\omega, \bar\tta,
\bar\omega)$ are twice differentiable with continuous derivatives.
\end{assmp}

%
\begin{remark}
\label{rem:constantc}
Assumption~\ref{ass:Gammac} is in particular satisfied for the
FitzHugh--Nagumo case. One technical difficulty is the dynamics is not
\emph{globally} Lispchitz continuous. This will entail some technical
complications in the following. Note also that the constant $|\!|\!|c
|\!|\!|$
mentioned in \eqref{eq:cgrowthcond} does not take part in the estimates
of Sections~\ref{sec:propagator} to \ref{sec:PLinfd}. It only enters
into account in Section~\ref{sec:nonlin}.
\end{remark}

%
\begin{assmp}[(Assumptions on $\mu$ and $\zeta$)]
\label{ass:muzeta}
We suppose that the initial distribution $\zeta$ of $\tta$ satisfies
the following moment condition:
%
%
\begin{equation}
\label{eq:asszeta} \int_{\cX} \| \tta \|^{\kappa}\zeta(
\mathrm{d}\tta)<\infty,
\end{equation}
and that the law of the disorder $\mu$ satisfies the moment condition
%
%
\begin{equation}
\label{eq:assmu} \int_{\cE} \| \omega \|^{\iota}\mu(
\mathrm{d}\omega)<\infty,
\end{equation}
where the constants $\kappa$ and $\iota$ are given by \eqref
{eq:polgrowthc} in Assumption~\ref{ass:Gammac}.
\end{assmp}
%
%
\begin{assmp}[(Assumptions on the weight $\Psi$)]
\label{ass:psi}
In order to cover the case of both the $P$-nearest model and the
power-law interaction introduced in Section~\ref
{subsubsec:introspatialcase}, we suppose that either
hypotheses (H1) or~(H2) is true:
\begin{longlist}[(H1)]
\item[(H1)]
$P$-nearest-neighbor:
%
%
\begin{equation}
\label{eq:weightPnn} \forall x, y \in \bigl[- \tfrac{1}{2}, \tfrac{1}{2}
\bigr]^{d}\qquad \Psi (x, y):= \chi_{R}(x,y),
\end{equation}
where $\chi_{R}$ is defined in \eqref{eq:weightnearest}.
\item[(H2)]
Power-law: the function $\Psi$ is supposed to be a nonnegative function
on $ [- \frac{1}{2}, \frac{1}{2} ]^{d}\times [- \frac{1}{2},
\frac{1}{2} ]^{d}$ such that the following properties are satisfied:
%
%
\begin{eqnarray}
\cI_{1}(\Psi)&:=&\sup_{a, x\in  [- {1}/{2}, {1}/{2}
]^{d}} \| x-a
\|^{\alpha} \Psi(x, a) <\infty,\label{eq:discont}
\\
\cI_{2}(\Psi)&:= &\sup_{x, y \in [- {1}/{2},{1}/{2}
]^{d}}\frac{\int\llvert \Psi(x, \bar x) - \Psi(y, \bar x)
\rrvert \,\mathrm{d}\bar x}{\| x-y \|^{(d-\alpha)\wedge1}}<
\infty, \label{eq:intpsi}
\\
\label{eq:holderpsi} \qquad\cI_{3}(\Psi)&:=& \sup_{a, x, y \in [- {1}/{2}, {1}/{2} ]^{d}} \frac{\llvert \| x-a \|^{2\gamma} \Psi(x, a) -
\| y-a \|^{2\gamma} \Psi(y, a) \rrvert }{\| x-y \|^{(2\gamma
-\alpha
)\wedge
1}}
\nonumber
\\[-8pt]
\\[-8pt]
\nonumber
&<&
\infty
\end{eqnarray}
for some parameters $\alpha\in[0, d)$ and $\gamma$ chosen to be
%
%
\begin{equation}
\label{eq:assgamma} %
\cases{\displaystyle \gamma\in \biggl[\alpha, \frac{d}{2} \biggr), &\quad $
\mbox{if } \displaystyle\alpha\in \biggl[0, \frac{d}{2} \biggr),$\vspace*{2pt}
\cr
\displaystyle\gamma=
\frac{d}{2}, & \quad$\mbox{otherwise.}$} %
\end{equation}
\end{longlist}
\end{assmp}
%
%
\begin{remark}
\label{rem:gamma}
Note that we could have chosen simply $\gamma= \frac{d}{2}$ in any
case. But this would have led to worse convergence rates than the ones
that we obtain below in Theorem~\ref{theo:LLNpowerinfd}.
\end{remark}
Of course, the main prototype for hypothesis (H2) is
when $\Psi (x, y ) = \| x-y \|^{-\alpha}$, for $\alpha<d$
[recall \eqref{eq:weightpower}]. But, the assumptions made in (H2) cover a larger class of examples: the reader may
think of the general case of $\Psi(x, y):= \psi(x, y)\| x-y \|
^{-\alpha
}$, for a bounded Lipschitz-continuous function $\psi$. Note also that
the case of bounded Lispchitz interactions is also captured (take
$\alpha=0$).

%
\begin{remark}[(About the supercritical case)]
\label{rem:alphageqd}
The case of a power-law interaction with $\alpha\geq d$ is more
delicate and requires more attention. Note that, to our knowledge, no
proposition for any continuous limit has been made in the literature in
this case. We are only aware of \cite{PhysRevE.82.016205}, where system
\eqref{eq:supercrit} below is considered for finite $N$.

One trivial observation is that the series $\sum_{j\in\gL
_{N}, j\neq i}\| i- j \|^{-\alpha}$ is in this case already
convergent. Consequently, an interaction term of the form $\frac
{1}{|\gL
_{N}|}\times \sum_{j\in\gL_{N}, j\neq i} \Gamma (\tta
_{i}, \omega
_{i}, \tta_{j}, \omega_{j}  )\| i-j \|^{-\alpha}$ simply
vanishes to
$0$ as $N\to\infty$. Hence, the correct model in this case is where the
factor $ \frac{1}{ \llvert \gL_{N} \rrvert }$ is absent,
%
%
\begin{eqnarray}
\label{eq:supercrit} \mathrm{d}\tta_{i}(t)&=& c(\tta_{i},
\omega_{i}) \,\mathrm{d}t\nonumber\\
&&{} + \mathop{\sum_{j\in\gL
_{N}}}_{ j\neq i}
\Gamma (\tta_{i}, \omega_{i}, \tta_{j},
\omega_{j} ) \| i-j \|^{-\alpha} \,\mathrm{d}t + \sig\cdot
\mathrm{d}B_{i}(t),\\
 \eqntext{i\in \gL_{N}.}
\end{eqnarray}
The main difficulty for the derivation of the correct continuous limit
in the case of \eqref{eq:supercrit} lies in the fact that the
interaction term $\sum_{j\in\gL_{N}, j\neq i} \Gamma
(\tta_{i}, \omega_{i},\break  \tta_{j}, \omega_{j}  ) \| i-j \|
^{-\alpha}$ is not
sufficiently mixing: if it exists, the McKean--Vlasov limit in this
case should be random. We believe that the correct continuous limit
should be governed by a stochastic partial differential equation
instead of a deterministic PDE. This case is currently under
investigation and will be the object of a future work.
\end{remark}

\subsection{The empirical measure} Let us consider for fixed horizon
$T$ and time $t\in[0, T]$, the empirical measure $\nu^{(N)}_{t}$
[introduced in \eqref{eq:nuNintro}],
%
%
\begin{equation}
\nu_{t}^{(N)}(\mathrm{d}\tta, \mathrm{d}\omega,
\mathrm{d}x):= \frac{1}{ \llvert
\gL_{N}
\rrvert } \sum_{j}
\delta_{(\tta_{j}(t), \omega_{j},
x_{j})}(\mathrm{d}\tta, \mathrm{d}\omega, \mathrm{d}x)
\end{equation}
as a probability measure on $\cX\times\cE\times [- \frac{1}{2},
\frac{1}{2} ]^{d}$. Here
%
%
\begin{equation}
x_{j}:= \frac{j}{2N}\in \biggl[- \frac{1}{2},
\frac{1}{2} \biggr]^{d},\qquad j\in \gL_{N}.
\end{equation}
%
\subsection{The McKean--Vlasov equation}
\label{subsec:MKV}
The convergence of the empirical measure at $t=0$ is clear: since
$(\tta
_{i}(0), \omega_{i})_{1\leq i\leq N}$ are i.i.d. random variables sampled
according to $\zeta\otimes\mu$, the initial empirical measure $\nu
_{0}^{(N)}$ converges, as $N\to\infty$, to
%
%
\begin{equation}
\label{eq:nu0} \nu_{0}(\mathrm{d}\tta, \mathrm{d}\omega,
\mathrm{d}x):=\zeta (\mathrm{d}\tta)\mu(\mathrm{d} \omega)\,\mathrm{d}x.
\end{equation}
An application of It\^o's formula to \eqref{eq:odegene} [for any $(\tta,
\omega, x)\mapsto f(\tta, \omega, x)$ bounded function of class
$\cC^{2}$
w.r.t. $\tta$ with bounded derivatives] leads to the following martingale
representation for $\nu^{(N)}$:
%
%
\begin{eqnarray}
\label{eq:nuNt} \bigl\langle\nu_{t}^{(N)}, f\bigr\rangle& =&
\bigl\langle\nu_{0}^{(N)}, f\bigr\rangle + \int
_{0}^{t}\biggl\langle\nu _{s}^{(N)}
, \frac{1}{2}\div_{\tta} \bigl(\sig\sig^{T}\nabla
_{\tta }f \bigr) + \nabla_{\tta}f \cdot c(\cdot, \cdot)\biggr
\rangle\,\mathrm{d}s\nonumber
\\
&&{}+ \int_{0}^{t}\biggl\langle
\nu_{s}^{(N)}, \nabla_{\tta}f \cdot\int\Gamma (
\cdot, \cdot, \bar\tta,\bar\omega) \Psi(\cdot, \bar x)\nu _{s}^{(N)}(
\mathrm{d}\bar\tta, \mathrm{d}\bar\omega, \mathrm {d}\bar x)\biggr\rangle\,
\mathrm{d}s\\
&&{} + M^{(N)}_{t}(f),\nonumber
\end{eqnarray}
where $M^{(N)}_{t}(f):= \frac{1}{ \llvert \gL_{N} \rrvert }
\sum_{j} \int_{0}^{t} \nabla_{\tta} f(\tta_{j}(s), \omega_{j}, x_{j})
\cdot
\sig\,\mathrm{d}B_{j}(s)$ is a martingale. Note that we use here the usual
duality notation $\langle\nu, f\rangle=\int f \,\mathrm{d}\nu$ for
the integral of a
test function $f$ against a measure $\nu$.

Taking formally $N\to\infty$ in \eqref{eq:nuNt} shows that any limit
point of $\nu^{(N)}$ should satisfy the following nonlinear
McKean--Vlasov equation:
%
%
\begin{eqnarray}
\label{eq:nut} \partial_{t} \langle\nu_{t}, f\rangle &=&
\biggl\langle\nu_{t}, \frac
{1}{2}\div _{\tta} \bigl(
\sig\sig^{T}\nabla_{\tta}f \bigr) + \nabla_{\tta}f
\cdot c(\cdot, \cdot)\biggr\rangle
\nonumber
\\[-8pt]
\\[-8pt]
\nonumber
&&{}+ \biggl\langle\nu_{t}, \nabla_{\tta}f \cdot\int\Gamma (
\cdot, \cdot, \bar\tta,\bar \omega) \Psi(\cdot, \bar x)\nu_{t}(
\mathrm{d} \bar\tta, \mathrm {d}\bar \omega, \mathrm{d}\bar x)\biggr\rangle,
\end{eqnarray}
where $\Psi(\cdot, \cdot)$ is the weight function introduced either in
hypotheses (H1) or in~(H2).
%
%
\begin{remark}
An important remark about a priori properties of \eqref{eq:nut} is the
following: taking a test function $f$ in \eqref{eq:nut} that does not
depend on $\tta$ implies
\[
\langle\nu_{0}, f\rangle= \langle\nu_{t}, f\rangle\qquad
\forall t\in[0, T].
\]
In particular, the marginal distribution of $(\omega, x)$ w.r.t. the measure
$\nu_{t}$ is independent of $t$ and equal to $\mathrm{d}\mu\otimes
\mathrm{d}x$. This
implies that, for the class of singular weight we consider here, $\Psi$
is always integrable against $\nu_{t}$, for all $t$, since the function
$y\mapsto\| x-y \|^{-\alpha}$ is integrable w.r.t. to the Lebesgue measure
on $ [- \frac{1}{2}, \frac{1}{2} ]^{d}$.

Moreover, since the function $c$ is supposed to have a polynomial
growth [recall~\eqref{eq:polgrowthc}], one has to justify in particular
the term $ \langle\nu_{t}, \nabla_{\tta}f\cdot c(\cdot, \cdot
)\rangle$ in
\eqref
{eq:nut} (the others are easily integrable). Thus, one should look for
solutions $t\mapsto\nu_{t}$ having finite moment: for all $t\in[0, T]$,
$ \int_{\cX\times\cE} \| \tta \|^{\kappa} \| \omega \|^{\iota
}\nu_{t}(\mathrm{d}\tta,
\mathrm{d}\omega, \mathrm{d}x)<\infty$.

In particular, well-posedness in \eqref{eq:nut} will be addressed
within the class of all measure-valued processes satisfying the
properties mentioned above.
\end{remark}

Formally integrating by parts in equation \eqref{eq:nut} and assuming
the existence of a density $\nu_{t}(\mathrm{d}\tta, \mathrm
{d}\omega, \mathrm{d}x) =
q_{t}(\tta
, \omega, x)\,\mathrm{d}\tta\mu(\mathrm{d}\omega)\,\mathrm{d}x$,
$q_{t}$ satisfies
%
%
\begin{eqnarray}
\label{eq:nutstrong} %
 \partial_{t}
q_{t} &=& \frac{1}{2}\div_{\tta} \bigl(\sig\sig
^{T}\nabla _{\tta}q_{t} \bigr)-
\div_{\tta} \bigl(q_{t}(\tta, \omega, x)c(\tta, \omega )
\bigr)
\nonumber\\
&&{} -\div_{\tta} \biggl(q_{t}(\tta, \omega, x)\int \Gamma(
\tta, \omega, \bar\tta,\bar\omega) \Psi(x, \bar x)q_{t}(\bar\tta, \bar
\omega, \bar x)\,\mathrm{d} \bar\tta\mu(\mathrm{d}\bar\omega)\,\mathrm{d}\bar x
\biggr),\\
 \eqntext{t>0. }
\end{eqnarray}
In the case where $\sig$ is nondegenerate, one can make this
integration by parts rigorous: using the same arguments as in \cite{GLP2011},
Appendix A, one can show that for any measure-valued initial
condition in \eqref{eq:nut}, by the regularizing properties of the heat
kernel, the solution of \eqref{eq:nut} has a regular density $q_{t}$
for all positive time that solves \eqref{eq:nutstrong}. We refer to
\cite{GLP2011}, Proposition A.1, for further details. But of course, if
$\sig$ is degenerate, the strong formulation \eqref{eq:nutstrong} does
not necessarily make sense, and one has to restrict to the weak
formulation \eqref{eq:nut} in that case.

\subsection{Results}
\label{subsec:resultsLLN}
The first result of this paper, whose proof is given in Section~\ref
{sec:nonlin}, concerns the existence of a weak solution to the
McKean--Vlasov equation \eqref{eq:nut}:
%
%
\begin{proposition}
\label{prop:existencenut}
Under Assumptions \ref{ass:Gammac}, \ref{ass:muzeta} and \ref{ass:psi},
for any initial condition $\nu_{0}(\mathrm{d}\tta, \mathrm{d}\omega
, \mathrm{d}
x)=\zeta(\mathrm{d}\tta
)\mu(\mathrm{d}\omega)\,\mathrm{d}x$, there exists a solution
$t\mapsto\nu_{t}$
to \eqref{eq:nut}.
\end{proposition}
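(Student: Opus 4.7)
The plan is to follow a Sznitman-type fixed-point construction for the nonlinear McKean--Vlasov process (cf.~\cite{SznitSflour}). The basic observation enabling this is that although $\Psi$ may be singular, it enters the drift only multiplied by the bounded kernel $\Gamma$, and $C_{\Psi}:=\sup_{x}\int \Psi(x,\bar x)\dd\bar x$ is finite in both cases~\ref{it:Pnearest} (trivially) and~\ref{it:powerlawinfd} (because $\N{\cdot}^{-\alpha}$ is integrable on $\big[-\frac{1}{2},\frac{1}{2}\big]^{d}$ when $\alpha<d$, as a consequence of~\eqref{eq:discont}). Consequently the interaction contribution to the single-particle drift is bounded and globally Lipschitz in $\tta$, so the full drift inherits the one-sided Lipschitz property~\eqref{eq:cgrowthcond} and the polynomial growth~\eqref{eq:polgrowthc} of $c$, placing us in the monotone-coefficient SDE framework which accommodates possibly degenerate $\sig$.

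Concretely, fix on a common probability space mutually independent variables $\tta_{0}\sim\zeta$, $\om\sim\mu$, $x_{0}$ uniformly distributed on $\big[-\frac{1}{2},\frac{1}{2}\big]^{d}$, and a standard Brownian motion $B$ in $\cX$. Let $\cH_{T}$ denote the space of square-integrable, continuous $\cX$-valued processes $\bar\tta$ adapted to this data, with $\bar\tta(0)=\tta_{0}$. For $\bar\tta\in\cH_{T}$, let $(\bar\tta',\tta'_{0},\om',x'_{0},B')$ be an independent copy of $(\bar\tta,\tta_{0},\om,x_{0},B)$, and define $\Phi(\bar\tta):=\vtta$ where $\vtta$ is the strong solution of
\begin{equation*}
\dd\vtta(t)=c(\vtta(t),\om)\dd t+\bbE'\!\left[\Gamma(\vtta(t),\om,\bar\tta'(t),\om')\Psi(x_{0},x'_{0})\right]\dd t+\sig\cdot\dd B(t),
\end{equation*}
with $\vtta(0)=\tta_{0}$ and $\bbE'$ denoting expectation over the primed variables alone. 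Existence and pathwise uniqueness of $\vtta$ follow from Assumption~\ref{ass:Gammac}: the bracketed interaction drift is bounded by $\Ninf{\Gamma}C_{\Psi}$ and globally Lipschitz in $\vtta$ with constant $\N{\Gamma}_{Lip}C_{\Psi}$, so the total drift is locally Lipschitz, one-sided Lipschitz, and of polynomial growth in $\vtta$, and monotone-coefficient SDE theory applies (\eg of Gy\"ongy--Krylov type) without any nondegeneracy assumption on $\sig$. Applying Ito to $\N{\vtta(t)}^{\kappa}$, using~\eqref{eq:polgrowthc} together with Gronwall and Assumption~\ref{ass:muzeta}, one obtains a uniform moment bound $\sup_{\bar\tta}\sup_{t\leq T}\bbE[\N{\vtta(t)}^{\kappa}]<\infty$, so $\Phi$ stabilizes a suitable ball of $\cH_{T}$.

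For the contraction step, given $\bar\tta^{1},\bar\tta^{2}\in\cH_{T}$ with $\vtta^{i}=\Phi(\bar\tta^{i})$, set $\Delta:=\vtta^{1}-\vtta^{2}$ and $\tilde\Delta:=\bar\tta^{1}-\bar\tta^{2}$ (the Brownian part cancels thanks to the common driver). Combining~\eqref{eq:cgrowthcond}, the Lipschitz continuity of $\Gamma$, Young's inequality and the weighted Cauchy--Schwarz bound
\begin{equation*}
\bigl(\bbE'\!\left[|\tilde\Delta'(t)|\Psi(x_{0},x'_{0})\right]\bigr)^{2}\leq \bbE'[\Psi(x_{0},x'_{0})]\,\bbE'\!\left[|\tilde\Delta'(t)|^{2}\Psi(x_{0},x'_{0})\right],
\end{equation*}
and then taking expectation over $x_{0}$ using the statistical independence of $\bar\tta'$ from $x_{0}$ (so that the singular weight is effectively integrated against Lebesgue measure and reduces to $C_{\Psi}$), one derives an estimate of the form
\begin{equation*}
\frac{\dd}{\dd t}\bbE\N{\Delta(t)}^{2}\leq K\bbE\N{\Delta(t)}^{2}+K'\bbE\N{\tilde\Delta(t)}^{2}.
\end{equation*}
Gronwall then yields $\sup_{t\leq\tau}\bbE\N{\Delta(t)}^{2}\leq K'\tau e^{K\tau}\sup_{t\leq\tau}\bbE\N{\tilde\Delta(t)}^{2}$, which is a strict contraction for $\tau$ small enough; Picard iteration furnishes a fixed point $\vtta^{*}$ of $\Phi$ on $[0,\tau]$. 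The moment bound from the previous step lets one restart at $t=\tau$ with a fresh initial law and concatenate finitely many such steps to cover $[0,T]$. Setting $\nu_{t}:=\mathrm{Law}(\vtta^{*}(t),\om,x_{0})$ and applying Ito to test functions $f(\tta,\om,x)$ shows that $\nu$ solves~\eqref{eq:nut}.

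The main obstacle is the simultaneous interplay of three difficulties: the polynomial growth of $c$, which precludes a pure global-Lipschitz Picard argument and forces one to work throughout with the one-sided Lipschitz condition~\eqref{eq:cgrowthcond} and with $\N{\cdot}^{\kappa}$-moment bounds; the possibly degenerate matrix $\sig$, which rules out any regularizing effect of the heat kernel and imposes a pathwise, coupling-based analysis; and the singularity of the weight $\Psi$, which is neutralized only through the boundedness and Lipschitz continuity of $\Gamma$ and through the independence of $\bar\tta'$ from $x_{0}$, producing the finite constant $C_{\Psi}$ in the contraction estimate. The FitzHugh--Nagumo dynamics~\eqref{eq:FHNintro}, with their cubic nonlinearity, are the prototypical case illustrating why a global Lipschitz hypothesis on $c$ cannot simply be substituted for~\eqref{eq:cgrowthcond}.
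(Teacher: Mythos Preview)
Your argument is correct and follows the same Sznitman-type fixed-point scheme as the paper, but the execution differs in two places worth recording.

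First, the paper works at the level of measures on path space rather than processes: it equips the set $\cM$ of measures admitting a disintegration $m^{\om,x}(\dd\tta)\mu(\dd\om)\dd x$ with a Wasserstein-type metric $\delta_{T}$ and proves the integral estimate $\delta_{t}(\Theta(m_{1}),\Theta(m_{2}))^{\kappa}\leq C_{T}\int_{0}^{t}\delta_{u}(m_{1},m_{2})^{\kappa}\dd u$. Iterating this yields $\delta_{T}(\Theta^{k+1}(\nu_{0}),\Theta^{k}(\nu_{0}))^{\kappa}\leq C_{T}^{k}T^{k}/k!$, hence a Cauchy sequence on all of $[0,T]$ without any small-time restart. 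Your short-time contraction plus concatenation is equivalent, but note that at the restart time $\tau$ the triple $(\vtta^{*}(\tau),\om,x_{0})$ is no longer a product, so the setup must be recast with a general joint initial law whose $x$-marginal is still Lebesgue; this is harmless but should be said explicitly. Replacing your Gronwall conclusion by its integral form and iterating as the paper does avoids the issue altogether.

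Second, and more interestingly, the two proofs handle the singular weight differently. To control the term coming from the difference of the two input measures, the paper applies H\"older with the conjugate pair $(p,q)$ of~\eqref{eq:p}, chosen so that $q\alpha<d$ makes $\Psi(x,\cdot)^{q}$ integrable; this forces $p>2$ when $\alpha\geq d/2$ and is why the metric $\delta_{T}$ carries the exponent $p$. Your weighted Cauchy--Schwarz $(\bbE'[|\tilde\Delta'|\Psi])^{2}\leq\bbE'[\Psi]\,\bbE'[|\tilde\Delta'|^{2}\Psi]$, followed by averaging over the independent uniform $x_{0}$, needs only $\sup_{y}\int\Psi(x,y)\dd x<\infty$, which holds under both~\ref{it:Pnearest} and~\ref{it:powerlawinfd}. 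So for Proposition~\ref{prop:existencenut} in isolation your route is the more economical one: one can stay at $p=2$ throughout. The paper's choice of $p$ is really dictated by the later law-of-large-numbers estimates (see~\eqref{eq:normpinfd}), not by the existence argument itself.
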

Having proven the existence of at least one such solution in the
general case, we turn to the issue of the convergence of the empirical
measure to \emph{any} of such solution. From now on, we specify the
problem to the case of hypothesis (H1) (Section~\ref
{subsubsec:Pnncase}) and of hypothesis (H2)
(Section~\ref{subsubsec:Plawcase}). For each case, in order to state
the convergence result, one needs to define an appropriate distance
between two random measures that is basically the supremum over
evaluations against a set of test functions. Such a space of test
functions must incorporate the kind of singularities that are present
either in hypotheses (H1) or~(H2).
\subsubsection{The $P$-nearest-neighbor case}
\label{subsubsec:Pnncase}Suppose that the weight function $\Psi$
satisfies hypothesis (H1) of Assumption~\ref{ass:psi}.
%
%
\begin{definition}[(Test functions for $P$-nearest-neighbor)]
\label{def:spacePnearest}
For fixed $R\in(0,1]$ and $a\in [-\frac{1}{2}, \frac{1}{2}
]^{d}$, let $\cC_{R, a}$ be the set of functions $f$ on $\cX\times
\cE
\times [- \frac{1}{2}, \frac{1}{2} ]^{d}$ of the form
\[
f\dvtx(\tta, \omega, x)\mapsto g(\tta, \omega) \cdot\chi _{R} (x-a
),
\]
where $\chi_{R}$ is given in \eqref{eq:weightnearest} and $g$ is
globally Lipschitz-continuous w.r.t. $(\tta, \omega)$
%
%
\begin{eqnarray}
\label{eq:PnnLipschitz} \exists C>0,
\forall(\tta, \omega, \bar\tta, \bar\omega)
\nonumber
\\[-8pt]
\\[-8pt]
\eqntext{\displaystyle\bigl \| g(
\tta, \omega) - g(\bar\tta, \bar\omega) \bigr\| \leq C \bigl(\llVert \tta- \bar\tta
\rrVert + \llVert \omega- \bar\omega\rrVert \bigr).}
\end{eqnarray}
Let
\[
\| f \|_{R,a}:= \sup_{\tta, \bar\tta, \omega, \bar\omega} \frac{ \| g(\tta, \omega ) - g(\bar\tta, \bar\omega) \|}{ \|
\tta-\bar\tta \| + \| \omega -\bar\omega \|}
\]
be the corresponding seminorm.
\end{definition}
%
%
\begin{remark}
\label{rem:nablafR}
Note that for any $f\in\cC_{R, a}$ that is $\cC^{1}$ in the variable
$\tta$, the following estimate holds:
%
%
\begin{equation}
\label{eq:nablafR} \forall\tta, \omega, x\qquad\bigl \| \nabla_{\tta} f(\tta, \omega,
x) \bigr\| \leq\| f \|_{R,
a} \chi_{R} (x-a ).
\end{equation}
\end{remark}
We now turn to the appropriate distance between two random measures:
%
%
\begin{definition}[(Distance for $P$-nearest-neighbor)]
\label{def:distancePnn}
For random probability measures $\lambda$ and $\nu$ on $\cX\times
\cE\times
[- \frac{1}{2}, \frac{1}{2} ]^{d}$, let
\[
d_{R}(\lambda, \nu):= \sup_{f} \bigl(\bE\bigl\|
\langle f, \lambda \rangle - \langle f, \nu\rangle \bigr\|^{2}
\bigr)^{1/2},
\]
where the supremum is taken over all functions $f\in\bigcup_{a\in
[-1,1]^{d}}\cC_{R, a}$, such that $\| f \|_{R, a}\leq1$, $\| f \|
_{\infty}\leq1$.
\end{definition}

Our convergence result is given in the following:
%
%
\begin{theorem}[(Law of large numbers)]
\label{theo:LLNPnn}
Under Assumptions \ref{ass:Gammac}, \ref{ass:muzeta} and
hypothesis \textup{(H1)} of Assumption~\ref{ass:psi}, for all $R\in(0, 1]$, for
any arbitrary solution $\nu$ to the mean-field equation \eqref{eq:nut},
we have
%
%
\begin{equation}
\sup_{0\leq t\leq T} d_{R}\bigl(\nu^{(N)}_{t},
\nu_{t}\bigr)\leq \frac{C}{N^{ 1\wedge{d}/{2}}},
\end{equation}
where the constant $C>0$ only depends on $T$, $\Gamma$, $R$ and $c$.
\end{theorem}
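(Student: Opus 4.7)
The plan is to implement the propagator method sketched in \S1.3. Fix an arbitrary solution $\nu$ to \eqref{eq:nut}---whose existence is granted by Proposition~\ref{prop:existencenut}---and associate to it the \emph{linear} time-inhomogeneous operator
\[
L^{\nu}_{s}\phi(\tta,\om,x):=\tfrac{1}{2}\div_{\tta}\!\bigl(\sig\sig^{T}\nabla_{\tta}\phi\bigr)+\nabla_{\tta}\phi\cdot c(\tta,\om)+\nabla_{\tta}\phi\cdot\!\int\!\Gamma(\tta,\om,\bar\tta,\bar\om)\chi_{R}(x-\bar x)\,\nu_{s}(\dd\bar\tta,\dd\bar\om,\dd\bar x),
\]
together with its backward propagator: for a terminal condition $f\in\cC_{R,a}$ with $\N{f}_{R,a},\Ninf{f}\leq 1$, the function $u_{s}:=P^{\nu}_{s,t}f$ solves $\partial_{s}u_{s}+L^{\nu}_{s}u_{s}=0$ on $[0,t]$ with $u_{t}=f$. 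The first step---the content of Section~\ref{sec:propagator}---is to show that this propagator preserves $\cC_{R,a}$ with a controlled seminorm: since $L^{\nu}_{s}$ differentiates only in the $\tta$ variable, the spatial factor $\chi_{R}(\cdot-a)$ is transported \emph{unchanged} through the flow, while the Lipschitz dependence in $(\tta,\om)$ is propagated via a Gronwall estimate exploiting the global Lipschitz continuity of $\Gamma$ and the one-sided condition \eqref{eq:cgrowthcond} on $c$. This yields a constant $C_{T}$ such that $\N{P^{\nu}_{s,t}f}_{R,a}\leq C_{T}$ together with the pointwise bound $\N{\nabla_{\tta}u_{s}(\tta,\om,x)}\leq C_{T}\,\chi_{R}(x-a)$ for $0\leq s\leq t\leq T$.

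Equipped with the propagator, It\^o's formula applied to $s\mapsto\cro{\nu^{(N)}_{s}}{u_{s}}$, combined with the tautological identity $\cro{\nu_{t}}{f}=\cro{\nu_{0}}{u_{0}}$ for the McKean--Vlasov solution (a consequence of $\partial_{s}\cro{\nu_{s}}{u_{s}}=0$), delivers the fundamental decomposition
\[
\cro{\nu^{(N)}_{t}}{f}-\cro{\nu_{t}}{f}=\cro{\nu^{(N)}_{0}-\nu_{0}}{u_{0}}+\int_{0}^{t}\!\cro{\nu^{(N)}_{s}}{\nabla_{\tta}u_{s}\cdot\Delta^{(N)}_{s}}\dd s+M^{(N)}_{t}(u)+r_{N},
\]
where $\Delta^{(N)}_{s}(\tta,\om,x):=\int\Gamma(\tta,\om,\bar\tta,\bar\om)\chi_{R}(x-\bar x)(\nu^{(N)}_{s}-\nu_{s})(\dd\bar\tta,\dd\bar\om,\dd\bar x)$ and $r_{N}=O(|\gL_{N}|^{-1})$ absorbs the diagonal exclusion $j\neq i$ in \eqref{eq:odegene}.

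Each piece is then controlled separately. Independence of the $(\tta_{i}(0),\om_{i})$ together with $\Ninf{u_{0}}\leq C_{T}$ gives a stochastic contribution of order $|\gL_{N}|^{-1}\sum_{i}\chi_{R}(x_{i}-a)^{2}\asymp N^{-d}$ to $\bE|\cro{\nu^{(N)}_{0}-\nu_{0}}{u_{0}}|^{2}$; the deterministic replacement of the Riemann sum $\tfrac{1}{|\gL_{N}|}\sum_{i}\chi_{R}(x_{i}-a)$ by $\int\chi_{R}(x-a)\dd x$ adds a boundary error of order $N^{-1}$ due to the discontinuity of $\chi_{R}$, so $(\bE|\cro{\nu^{(N)}_{0}-\nu_{0}}{u_{0}}|^{2})^{1/2}\leq C(N^{-d/2}+N^{-1})$. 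The martingale bracket is $\tfrac{1}{|\gL_{N}|^{2}}\sum_{j}\int_{0}^{t}|\sig^{T}\nabla_{\tta}u_{s}(\tta_{j},\om_{j},x_{j})|^{2}\dd s\leq CN^{-d}$ by the $\chi_{R}$-pointwise bound on $\nabla_{\tta}u_{s}$ and the same counting argument, yielding $\bE|M^{(N)}_{t}(u)|^{2}\leq CN^{-d}$. Finally, the drift term is where the self-consistency enters: for each tagged particle $(\tta_{i},\om_{i},x_{i})$ the function $(\bar\tta,\bar\om,\bar x)\mapsto\Gamma(\tta_{i},\om_{i},\bar\tta,\bar\om)\chi_{R}(x_{i}-\bar x)$ belongs to $\cC_{R,x_{i}}$ with seminorm and supremum bounded uniformly in $i$ by $\N{\Gamma}_{Lip}+\Ninf{\Gamma}$, so $(\bE|\Delta^{(N)}_{s}(\tta_{i},\om_{i},x_{i})|^{2})^{1/2}\leq C\,d_{R}(\nu^{(N)}_{s},\nu_{s})$; Cauchy--Schwarz (with weight $\chi_{R}$) and Minkowski then give $\bigl(\bE\bigl|\int_{0}^{t}\cro{\nu^{(N)}_{s}}{\nabla_{\tta}u_{s}\cdot\Delta^{(N)}_{s}}\dd s\bigr|^{2}\bigr)^{1/2}\leq C\int_{0}^{t}d_{R}(\nu^{(N)}_{s},\nu_{s})\dd s$.

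Combining the three estimates and taking the supremum over admissible terminal conditions $f$ yields a self-consistent inequality to which Gronwall's lemma applies, producing $\sup_{0\leq t\leq T}d_{R}(\nu^{(N)}_{t},\nu_{t})\leq C(N^{-d/2}+N^{-1})\leq C/N^{1\wedge d/2}$, which is the announced rate. The main obstacle is the stability claim in the first paragraph: the test functions in $\cC_{R,a}$ are merely Lipschitz in $(\tta,\om)$ and \emph{discontinuous} in $x$, and this regularity must be propagated through the linearized McKean--Vlasov flow despite the fact that $c$ is only \emph{locally} Lipschitz with polynomial growth (recall \eqref{eq:polgrowthc}) and that $\sig$ is allowed to be degenerate. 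This is handled by a truncation/approximation scheme coupled with the polynomial moment bounds delivered by \eqref{eq:cgrowthcond} and Assumption~\ref{ass:muzeta}, which is precisely the content of Sections~\ref{sec:propagator}--\ref{sec:loclip}.
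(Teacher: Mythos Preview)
Your proposal follows essentially the same strategy as the paper: the propagator/backward Kolmogorov identity (the paper's Lemma~\ref{lem:varf}), stability of the class $\cC_{R,a}$ under $P^{\nu}_{s,t}$ via the one-sided Lipschitz condition on $c$ (Lemma~\ref{lem:propagatorLip}), separate control of the initial, martingale and drift contributions (Proposition~\ref{prop:initialcondPnn} and the proof in Section~\ref{sec:Pnn}), closure by Gronwall, and finally removal of the global Lipschitz hypothesis by approximation (Yosida, in Section~\ref{sec:loclip}). The only cosmetic differences are that you keep track of the diagonal exclusion $j\neq i$ explicitly as a residual $r_{N}$ (the paper absorbs it silently) and that you run Gronwall on $d_{R}$ via Minkowski rather than on $d_{R}^{2}$ via H\"older; note also a slip in your variance line, where the stochastic contribution should read $|\gL_{N}|^{-2}\sum_{i}\chi_{R}(x_{i}-a)^{2}\asymp N^{-d}$ rather than $|\gL_{N}|^{-1}\sum_{i}\chi_{R}(x_{i}-a)^{2}$.
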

%
\subsubsection{The case of the power-law interaction}
\label{subsubsec:Plawcase}
Assume that the weight function $\Psi$ satisfies hypothesis (H2). In view of the form of $\Psi$ in this case (recall
Assumption~\ref{ass:psi}), the main idea is to consider test functions
$(\tta, \omega, x)\mapsto f(\tta, \omega, x)$ that become regular when
renormalized by $\| x-a \|^{\alpha}$. The seminorm $\| \cdot \|_{a}$
introduced in \eqref{eq:seminormplaw} below should therefore be thought
of as a \emph{weighted H\"older seminorm}.
%
%
\begin{definition}[(Test functions for power-law interaction)]
\label{def:spacepowerinfd}
For fixed $\alpha$ and $\gamma$ as in Assumption~\ref{ass:psi} and for
fixed $a\in [- \frac{1}{2}, \frac{1}{2} ]^{d}$, let $\cC
_{a}$ be
the set of functions $(\tta, \omega, x)\mapsto f(\tta, \omega,
x)$ on $\cX
\times\cE\times [- \frac{1}{2}, \frac{1}{2} ]^{d}$ satisfying:
\begin{itemize}
\item regularity w.r.t. $(\tta, \omega)$: $(\tta, \omega)\mapsto\|
x-a \|^{\alpha
} f(\tta, \omega, x)$ is globally Lipschitz-continuous on $\cX
\times\cE$,
uniformly in $x$, that is,
%
%
\begin{eqnarray}
\label{eq:fLipttaom} \exists C>0,
\forall(\tta, \omega, \bar\tta, \bar\omega)
\nonumber
\\[-8pt]
\\[-8pt]
\eqntext{\displaystyle \| x-a
\|^{\alpha}\bigl \| f(\tta, \omega, x) - f(\bar\tta, \bar\omega, x) \bigr\|
 \leq C
\bigl(\llVert \tta- \bar\tta\rrVert + \llVert \omega- \bar\omega \rrVert \bigr);}
\end{eqnarray}
\item regularity w.r.t. $x$: $x\mapsto\| x-a \|^{\alpha} f(\tta,
\omega, x)$
is uniformly bounded
%
%
\begin{equation}
\label{eq:falphabounded} \exists C>0 \qquad\| x-a \|^{\alpha} \bigl\|
 f(\tta, \omega, x)\bigr \|
\leq C,
\end{equation}
and $x\mapsto\llvert  x-a \rrvert ^{2\gamma} f(\tta, \omega
, x)$ is
globally $(2\gamma-\alpha)\wedge1$-H\"older, uniformly in $(\tta,
\omega)$
%
%
\begin{eqnarray}
\label{eq:falphaHolder} \exists C>0
\nonumber
\\[-8pt]
\\[-8pt]
\eqntext{\displaystyle
\bigl\| \| x-a \|^{2\gamma} f(\tta, \omega, x) - \|
y-a \| ^{2\gamma} f(\tta, \omega, y) \bigr\| \leq C \| x-y \|^{(2\gamma-\alpha
)\wedge1}.}
\end{eqnarray}
\end{itemize}
Denote by
%
%
\begin{eqnarray}
\label{eq:seminormplaw} \| f \|_{a}&:=& \sup_{\tta, \bar\tta, \omega, \bar\omega, x}
\frac{ \| x-a \|^{\alpha}\| f(\tta, \omega, x) - f(\bar\tta, \bar
\omega, x) \|}{ \| \tta -\bar\tta \| + \| \omega-\bar\omega \|} \nonumber\\
&&{}+ \sup_{\tta, \omega,
x} \| x-a \|^{\alpha}\bigl \| f(
\tta, \omega, x) \bigr\|
\\
&&{}+ \sup_{\tta,
\omega, x, y} \frac{ \| \| x-a \|^{2\gamma } f(\tta, \omega, x) -
\| y-a \|^{2\gamma} f(\tta, \omega, y) \|}{\| x-y \|^{(2\gamma
-\alpha)\wedge1}}\nonumber
\end{eqnarray}
the corresponding seminorm.
\end{definition}

%
\begin{remark}
\label{rem:nablafalpha}
Note that for any $f\in\cC_{a}$ that is $\cC^{1}$ in the variable
$\tta
$, the following holds:
%
%
\begin{equation}
\label{eq:nablafalpha} \forall\tta,
\omega, x \qquad\bigl\| \nabla_{\tta} f(\tta, \omega,
x) \bigr\| \leq\frac{\| f \|_{a}}{\| x-a \|^{\alpha}}.
\end{equation}
\end{remark}
The corresponding definition of the distance between two random
measures is similar to Definition~\ref{def:distancePnn} given in the
$P$-nearest-neighbor case. The main difference here is that one needs
to take care of test functions with singularities. Since those
singularities happen at points of the form $ \frac{i}{2N}$ (for some
$i$ and $N$) that are regularly distributed on $ [- \frac{1}{2},
\frac{1}{2} ]^{d}$, we first need to introduce some further
notation: for all integers $K\geq1$, we denote by $\cD_{K}$ the regular
discretization of $ [- \frac{1}{2}, \frac{1}{2} ]^{d}$ with mesh
of length $\frac{1}{2K}$
%
%
\begin{eqnarray}
\label{eq:defIK} \cD_{K}&:=& \biggl\{ \biggl(\frac{j_{1}}{2K}, \ldots,
\frac
{j_{d}}{2K} \biggr); -K\leq j_{1}\leq K, \ldots, -K\leq
j_{d}\leq K\biggr\}
\nonumber
\\[-8pt]
\\[-8pt]
\nonumber
& \subset& \biggl[- \frac
{1}{2},
\frac{1}{2} \biggr]^{d}.
\end{eqnarray}
The appropriate distance between two random measures is then:
%
%
\begin{definition}[(Distance for power-law interaction)]
\label{def:distancePLinfd}
Let $\alpha<d$ and $p\geq2$ be defined by
%
%
\begin{equation}
\label{eq:p} p:= %
\cases{ 2,& \quad$\mbox{if } \displaystyle\alpha\in\biggl[0,
\frac{d}{2}\biggr)$,\vspace*{2pt}
\cr
\displaystyle\biggl\lceil\frac{d}{d-\alpha}\biggr\rceil,&\quad $
\mbox{if } \displaystyle\alpha\in\biggl[\frac
{d}{2}, d\biggr)$, } %
\end{equation}
where $\lceil x\rceil$ stands for the smallest integer strictly larger
than $x$.
On the set of random probability measures on $\cX\times\cE\times
[- \frac{1}{2}, \frac{1}{2} ]^{d}$, let us define a sequence of
distances $  (d^{(p)}_{K}(\cdot, \cdot) )_{K\geq1} $ indexed
by $K\geq1$, between two elements $\lambda$ and $\nu$ by
\[
d^{(p)}_{K}(\lambda, \nu)= \sup_{f}
\bigl(\bE\bigl\| \langle f, \lambda \rangle - \langle f, \nu\rangle \bigr\|^{p}
\bigr)^{1/p},
\]
where the supremum is taken over all the functions $f\in
\bigcup_{a\in\cD_{K'}, 1\leq K'\leq K}\cC_{a}$, such that $\| f \|
_{a}\leq1$. Let us then define the distance $d_{\infty
}^{(p)}(\cdot,
\cdot)$ by
%
%
\begin{equation}
d^{(p)}_{\infty}(\lambda, \nu):= \sum
_{K\geq1} \frac
{1}{2^{K}}\frac
{e^{-CK^{ {dp}/{q}}}}{K^{2d}}
\bigl(d^{(p)}_{K}(\lambda, \nu )\wedge 1 \bigr)
\end{equation}
for a sufficiently large constant $C$\vspace*{1pt} (that depends on the parameters
of our model) and where $q$ is the conjugate of $p$: $ \frac{1}{p} +
\frac{1}{q}=1$. For a precise estimate on $C$, we refer to
Proposition~\ref{prop:distanceKN} below.
\end{definition}
Apart from the weight $ \frac{e^{-CK^{ {dp}/{q}}}}{K^{2d}}$ (which
is precisely here to compensate the estimate that we find in
Proposition~\ref{prop:distanceKN} below), the definition of
$d^{(p)}_{\infty}(\cdot, \cdot)$ exactly follows the usual Fr\'echet
construction; see, for example, \cite{Gelfand1964}.
%
%
\begin{remark}
\label{rem:p}
The choice of the integer $p$ in \eqref{eq:p} is made for integrability
reasons that will become clear in the proof of Theorem~\ref
{theo:LLNpowerinfd}. One only has to notice here that $p$ has been
precisely defined so that its conjugate $q$ always satisfies $q\alpha
<d$.
\end{remark}
The main result of this work is the following:
%
%
\begin{theorem}[(Law of large numbers in the power-law case)]
\label{theo:LLNpowerinfd}
\mbox{}
Under Assumptions \ref{ass:Gammac}, \ref{ass:muzeta} and
hypothesis \textup{(H2)} of Assumption~\ref{ass:psi}, for any arbitrary
solution $\nu$ to the mean-field equation \eqref{eq:nut}, we have
%
%
\begin{equation}
\sup_{0\leq t\leq T} d_{\infty}^{(p)}\bigl(
\nu^{(N)}_{t}, \nu _{t}\bigr)\leq C %
\cases{\displaystyle \frac{1}{N^{\gamma\wedge1}}, & \quad
$\mbox{if } \displaystyle\alpha\in \biggl[0, \frac
{d}{2} \biggr),$
\vspace*{2pt}
\cr
\displaystyle\frac{\ln N}{N^{{d}/{2}\wedge1}}, & \quad $\mbox{if } \displaystyle\alpha=\frac
{d}{2},$
\vspace*{2pt}
\cr
\displaystyle\frac{\ln N}{N^{(d-\alpha)\wedge1}}, & \quad$\mbox{if } \alpha\in \biggl(
\displaystyle\frac
{d}{2}, d \biggr),$} %
\end{equation}
where the constant $C>0$ only depends on $T$, $\Gamma$, $\Psi$,
$\alpha
$ and $c$.
\end{theorem}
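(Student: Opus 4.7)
The plan is to implement the propagator method announced in the introduction: fix an arbitrary solution $\nu$ to \eqref{eq:nut} and, for every test function $f\in\cC_a$, build a backward evolution $U_{s,t}f$ whose action turns the identity $\cro{\nu^{(N)}_t - \nu_t}{f}$ into three controllable pieces. Concretely, for frozen parameters $(\om,x)$ and initial datum $\tta$, I would introduce the nonlinear but linearly-driven auxiliary SDE
\begin{equation*}
\dd\phi^{s,r}_{\tta,\om,x} = c(\phi^{s,r}_{\tta,\om,x}, \om)\dd r + \int\Gamma(\phi^{s,r}_{\tta,\om,x}, \om, \bar\tta, \bar\om)\Psi(x, \bar x)\nu_r(\dd\bar\tta, \dd\bar\om, \dd\bar x)\dd r + \sig\cdot\dd B_r
\end{equation*}
on $r\in[s,t]$ with $\phi^{s,s}_{\tta,\om,x}=\tta$, and set $U_{s,t}f(\tta,\om,x):=\bbE[f(\phi^{s,t}_{\tta,\om,x},\om,x)]$. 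Applying It\^o's formula to $r\mapsto\cro{\nu^{(N)}_r}{U_{r,t}f}$ and using the fact that $U_{s,t}f$ satisfies the corresponding backward Kolmogorov equation along $\nu$, the non-backward-evolved part collapses and yields the decomposition
\begin{equation*}
\cro{\nu^{(N)}_t - \nu_t}{f} = \cro{\nu^{(N)}_0 - \nu_0}{U_{0,t}f} + \cM^{(N)}_t + \int_0^t \cro{\nu^{(N)}_s}{\nabla_\tta U_{s,t}f\cdot H^{(N)}_s}\dd s,
\end{equation*}
where $\cM^{(N)}_t$ is a Brownian martingale and $H^{(N)}_s(\tta,\om,x):=\int\Gamma(\tta,\om,\bar\tta,\bar\om)\Psi(x,\bar x)(\nu^{(N)}_s-\nu_s)(\dd\bar\tta,\dd\bar\om,\dd\bar x)$ measures the interaction discrepancy.

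The heart of the argument is then a uniform stability estimate $\N{U_{s,t}f}_a\leq C_T\N{f}_a$ on $[0,T]$, with $C_T$ depending only on $T,\Gamma,c$ and $\cI_1(\Psi),\cI_2(\Psi),\cI_3(\Psi)$. The Lipschitz control in $(\tta,\om)$ is obtained by differentiating the auxiliary SDE with respect to initial datum and parameter, using the one-sided Lipschitz bound \eqref{eq:cgrowthcond} on $c$ together with the global Lipschitz control of $\Gamma$ from Assumption~\ref{ass:Gammac}. The H\"older control in the spatial parameter $x$ is more delicate because $x$ enters the auxiliary drift only through $\Psi(x,\bar x)$: regularity of $U_{s,t}f$ in $x$ is then inherited from \eqref{eq:intpsi}--\eqref{eq:holderpsi}, and the weight $\N{x-a}^{2\gamma}$ in \eqref{eq:falphaHolder} is tailored precisely to match the singularity structure of $\Psi$, which is exactly why the choice of $\gamma$ is split in \eqref{eq:assgamma}. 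In addition, differentiating in $\tta$ gives the pointwise gradient bound $\N{\nabla_\tta U_{s,t}f(\tta,\om,x)}\leq C_T\N{x-a}^{-\alpha}$, which is the propagator analogue of Remark~\ref{rem:nablafalpha} and drives all subsequent estimates.

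With the propagator bound in hand I would treat the three terms separately. The initial term $\cro{\nu^{(N)}_0 - \nu_0}{U_{0,t}f}$ is a centred sum of i.i.d.\ contributions in $(\tta_i(0),\om_i)$ evaluated at the \emph{deterministic} sites $x_i=i/(2N)$; by Marcinkiewicz-Zygmund applied at exponent $p$, its $L^p$-norm is bounded by $N^{-1/2}\bigl(|\gL_N|^{-1}\sum_i\N{x_i-a}^{-q\alpha}\bigr)^{1/q}$, which is finite precisely because $p$ has been chosen in \eqref{eq:p} so that $q\alpha<d$ (Remark~\ref{rem:p}). The martingale term is handled identically via Burkholder-Davis-Gundy together with the gradient bound above. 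The drift error couples the unknown $\nu^{(N)}_s-\nu_s$ to itself via $\Psi$: after a H\"older decomposition in $x$ and the use of \eqref{eq:intpsi}, it splits into a self-referential piece controlled by $\sup_{a\in\cD_{K'},K'\leq N}d^{(p)}_{K'}(\nu^{(N)}_s,\nu_s)$, plus a Riemann-sum discretization error of order $N^{-(d-\alpha)\wedge 1}$ (with a logarithm when $\alpha\geq d/2$). Closing via Gronwall yields each of the three rates in the theorem; the Fr\'echet weights $e^{-CK^{dp/q}}/K^{2d}$ are introduced exactly to absorb the factor $|\cD_K|^{1/q}\sim K^{d/q}$ together with the exponential-in-$K$ constants produced by the BDG estimates against singular test functions indexed by $a\in\cD_K$.

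The main obstacles I expect are two: first, the uniform propagator estimate $\N{U_{s,t}f}_a\leq C_T$ when $c$ is only \emph{locally} Lipschitz with polynomial growth — this forces one to first prove everything under an auxiliary global-Lipschitz assumption (as announced in the organisation of Section~\ref{sec:loclip}) and then remove it via truncation together with moment bounds on the particle system that are uniform in $N$, the latter themselves relying on the one-sided Lipschitz bound \eqref{eq:cgrowthcond} and Assumption~\ref{ass:muzeta}; and second, the sharp transition at $\alpha=d/2$, where the borderline integrability of $\N{x-a}^{-q\alpha}$ against the Lebesgue measure forces both the jump $p=2\to p>2$ in the moment exponent and the appearance of the logarithmic correction — threading this critical exponent correctly through the self-referential Gronwall step is where the proof has to be most careful.
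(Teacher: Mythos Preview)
Your approach is the paper's: the propagator $U_{s,t}$ is $P_{s,t}$ of \eqref{eq:propagator}, the three-term decomposition is Lemma~\ref{lem:varf}, the stability estimate $\N{U_{s,t}f}_a\leq C_T\N{f}_a$ is Lemma~\ref{lem:propagatorLip}, and the Gronwall closure with the $K$-dependent constants absorbed by the Fr\'echet weights is Proposition~\ref{prop:distanceKN}.

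One substantive correction: you have misplaced the Riemann-sum discretization error. In the paper the drift term is \emph{purely} self-referential --- the point is that for each $k$ the function $(\bar\tta,\bar\om,\bar x)\mapsto\Gamma(\tta_k,\om_k,\bar\tta,\bar\om)\Psi(x_k,\bar x)$ already lies in $\cC_{x_k}$ with $x_k\in\cD_N$, so $\bE\N{H^{(N)}_s(\tta_k,\om_k,x_k)}^p$ is bounded directly by $d^{(p)}_K(\nu^{(N)}_s,\nu_s)^p$ with no additive residual. The discretization error lives instead in the \emph{initial} term $\cro{\nu^{(N)}_0-\nu_0}{U_{0,t}f}$, which is not merely a centred i.i.d.\ sum: the sites $x_i$ are deterministic lattice points whereas $\nu_0$ carries Lebesgue measure $\dd x$, so one splits (Proposition~\ref{prop:initialcond}) into an i.i.d.\ fluctuation $A_N$ of order $N^{-d/2}$ (not $N^{-1/2}$) and a deterministic Riemann error $B_N$. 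It is $B_N$ that produces the actual rates $N^{-\gamma\wedge1}$ and $N^{-(d-\alpha)\wedge1}$, and its control is exactly where the weighted H\"older condition \eqref{eq:falphaHolder} is used --- the estimate \eqref{eq:estimPhi} and the sums $S_N^{(1)},S_N^{(2)},S_N^{(3)}$ are the heart of the argument, and without them you do not recover the exponents in the statement. A minor point on the localisation step: the paper removes the global-Lipschitz hypothesis on $c$ via Yosida approximation (Section~\ref{sec:loclip}) rather than truncation, the advantage being that the one-sided Lipschitz constant $L$ --- the only feature of $c$ entering $\vvvert P\vvvert$ in Lemma~\ref{lem:propagatorLip} --- is preserved uniformly in the approximation parameter.
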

Note that the speed of convergence found in Theorem~\ref
{theo:LLNpowerinfd} is never smaller than $ N^{-{d}/{2}}$ which is
the optimal speed for the case without spatial extension; recall the
CLT results in the mean field case in \cite{Lucon2011}. Note also that,
in the case where $0\leq\alpha< \frac{d}{2}$, we have obtained a speed
of convergence which is arbitrarily close to $N^{-({d}/{2}\wedge
1)}$ (since in that case $\gamma$ is arbitrarily close to $ \frac
{d}{2}$). We believe that the optimal speed in this case should be
\emph
{exactly} $N^{- ({d}/{2}\wedge1)}$, but the proof we propose in
this work does not seem to reach this optimal result.

Nevertheless, in the case where we only consider a \emph{bounded
Lispchitz-continuous} weight function $\Psi$ (i.e., with no singularity
at all), the proof of Theorem~\ref{theo:LLNpowerinfd} can be
considerably simplified and one obtains a speed that is $N^{- {d}/{2}}$.

Note also that the fluctuations when $\alpha\in [\frac{d}{2},
d )$ appear to be nontrivial. A natural perspective of this work
would be to prove a precise central limit theorem in this case and to
study the limiting fluctuation process in details.

\subsection{Well-posedness of the McKean--Vlasov equation}
A straightforward corollary of Theorems \ref{theo:LLNPnn} and \ref
{theo:LLNpowerinfd} is that uniqueness holds for the McKean--Vlasov
equation \eqref{eq:nut}:
%
%
\begin{proposition}[(Well-posedness of the McKean--Vlasov equation)]
Under Assumptions \ref{ass:Gammac}, \ref{ass:muzeta} and \ref{ass:psi},
for every initial condition $\nu_{0}(\mathrm{d}\tta,\break  \mathrm
{d}\omega, \mathrm{d}x)=
\zeta(\mathrm{d}
\tta)\mu(\mathrm{d}\omega)\,\mathrm{d}x$, there exists a unique solution
$t\mapsto\nu
_{t}\in\cM_{1} (\cC([0, T],\break  \cX)\times\cE\times [-
\frac
{1}{2}, \frac{1}{2} ]^{d} )$ to the McKean--Vlasov
equation \eqref{eq:nut}.
\end{proposition}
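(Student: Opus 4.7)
The existence part is already given by Proposition~\ref{prop:existencenut}, so the whole content of the statement lies in uniqueness. The plan is to exploit the fact that Theorems~\ref{theo:LLNPnn} and~\ref{theo:LLNpowerinfd} were proved against an \emph{arbitrary} solution of~\eqref{eq:nut}: the empirical measure of the particle system~\eqref{eq:odegene} converges to every such solution. This immediately turns the laws of large numbers into a uniqueness device.

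Concretely, let $\nu^{1}$ and $\nu^{2}$ be two solutions of~\eqref{eq:nut} with common initial condition $\nu_0 = \zeta(\dd\tta)\mu(\dd\om)\dd x$. Build a single particle system~\eqref{eq:odegene} on one probability space, with empirical measure $\nu^{(N)}$, and denote by $d$ either $d_{R}$ (in the $P$-nearest case, for fixed $R \in (0,1]$) or $d_{\infty}^{(p)}$ (in the power-law case). Applying the relevant LLN theorem twice yields
\[
\sup_{0\leq t\leq T} d(\nu^{(N)}_t, \nu^{1}_t) \xrightarrow[N\to\infty]{} 0 \quad \text{and} \quad \sup_{0\leq t\leq T} d(\nu^{(N)}_t, \nu^{2}_t) \xrightarrow[N\to\infty]{} 0.
\]
Both $d_R$ and $d_\infty^{(p)}$ satisfy the triangle inequality (they are suprema of $L^p$-type functionals against a class of test functions), hence $\sup_{0\leq t\leq T} d(\nu^{1}_t, \nu^{2}_t) = 0$. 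Since $\nu^1$ and $\nu^2$ are deterministic, this reads as $\langle f, \nu^{1}_t\rangle = \langle f, \nu^{2}_t\rangle$ for every admissible test function $f$ and every $t \in [0,T]$.

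The only remaining point, and the main (mild) obstacle, is to check that this equality of integrals against the admissible classes $\cC_{R,a}$ or $\cC_{a}$ actually forces $\nu^{1}_{t}=\nu^{2}_{t}$ as measures on $\cX\times\cE\times[-\tfrac12,\tfrac12]^d$. Both $\nu^1_t$ and $\nu^2_t$ have fixed marginal $\mu(\dd\om)\dd x$ on $(\om,x)$, so it suffices to separate their conditional laws on $\tta$ given $(\om,x)$. In the $P$-nearest case, testing $f(\tta,\om,x) = g(\tta,\om)\chi_R(x-a)$ for $g$ bounded Lipschitz and letting $R\to 0$ recovers, by Lebesgue differentiation along the continuous marginal in $x$, the integral $\int g(\tta,\om)\,\nu^{i}_t(\dd\tta,\dd\om,\{a\})$ for almost every $a$; this pins down the conditional law. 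In the power-law case, the class $\cC_{a}$ already contains all bounded Lipschitz $g(\tta,\om)$ multiplied by any bounded Lipschitz function of $x$ vanishing at $a$, so along $a\in \bigcup_{K}\cD_{K}$ (which is dense in $[-\tfrac12,\tfrac12]^d$) we can approximate any continuous compactly supported $h(\tta,\om,x)$ and conclude weak equality $\nu^{1}_t = \nu^{2}_t$ for every $t\in[0,T]$. Existence and uniqueness of the trajectory-valued measure then follow from the standard argument of reconstructing the path law from the nonlinear martingale problem associated with~\eqref{eq:nut}, as already carried out in Section~\ref{sec:nonlin}.
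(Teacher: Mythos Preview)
Your core approach---existence from Proposition~\ref{prop:existencenut}, uniqueness by applying the LLN to two arbitrary solutions and using the triangle inequality---is exactly the argument the paper has in mind (the proposition is stated there as a ``straightforward corollary'' of Theorems~\ref{theo:LLNPnn} and~\ref{theo:LLNpowerinfd} with no further proof).

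You go beyond the paper in checking that $d(\nu^1_t,\nu^2_t)=0$ actually forces $\nu^1_t=\nu^2_t$, which is a legitimate point the paper leaves implicit. Your treatment of the power-law case is essentially correct; in fact every bounded Lipschitz function of $(\tta,\om,x)$ already belongs to each $\cC_a$ up to normalization (no vanishing at $a$ is needed), and bounded Lipschitz functions separate probability measures. In the $P$-nearest case, however, there is a gap: you propose to ``let $R\to 0$'' and invoke Lebesgue differentiation, but $R$ is the fixed parameter of the model (the $R$ appearing in $\Psi=\chi_R$), so it cannot be varied. A correct argument runs as follows. For fixed bounded Lipschitz $g$ set
\[
\phi(x):=\int g(\tta,\om)\,\nu^{1,x}_t(\dd\tta,\dd\om)-\int g(\tta,\om)\,\nu^{2,x}_t(\dd\tta,\dd\om),
\]
a bounded function on $[-\tfrac12,\tfrac12]^d$, extended by $0$ to $\bR^d$. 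The identity $d_R(\nu^1_t,\nu^2_t)=0$ gives $(\phi*\chi_R)(a)=0$ for every $a$, hence $\hat\phi\cdot\widehat{\chi_R}\equiv 0$. Since $\phi$ is compactly supported, $\hat\phi$ is real-analytic, while $\widehat{\chi_R}$ (a product of sinc functions) vanishes only on a closed set with empty interior; thus $\hat\phi\equiv 0$, $\phi=0$ a.e., and $\nu^1_t=\nu^2_t$.
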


\section{The nonlinear process and the existence of a continuous-limit}
\label{sec:nonlin}
The purpose of this paragraph is to prove Proposition~\ref
{prop:existencenut} concerning the existence of a solution to the
McKean--Vlasov equation \eqref{eq:nut}. This part is reminiscent of the
techniques used by Sznitman \cite{SznitSflour} in order to prove
propagation of chaos for nondisordered models.

\subsection{Distance on probability measures}
Let us first consider the set $\cM_{\cX}$ of probability measures on
$\cC([0,T], \cX)$ with finite moments of order $\kappa$ [where
$\kappa
\geq2$ is given in \eqref{eq:polgrowthc}] and endow this set with the
Wasserstein metric
%
%
\begin{equation}
\label{eq:wasserstein} \delta_{\cX}^{(T)}(p_1,
p_2):=\inf \Bigl\{\bE \Bigl(\sup_{s\leq
T}\bigl\|
\vartheta_{s}^{(1)}-\vartheta_{s}^{(2)}
\bigr\|^{\kappa} \Bigr)^{
{1}/{\kappa}} \Bigr\},
\end{equation}
where the infimum in \eqref{eq:wasserstein} is considered over all
couplings $  (\vartheta^{(1)}, \vartheta^{(2)} )$ with
respective marginals $p_{1}$ and $p_{2}$. Here, the $\vartheta^{(i)}$
are understood as random variables on a certain probability space
$
( \Om, \bP )$. Note, however, that the definition of \eqref
{eq:wasserstein} does not depend on its particular choice. Equation~\eqref
{eq:wasserstein} defines a complete metric on $\cM_{\cX}$ encoding the
topology of convergence in law with convergence of moments up to order
$\kappa$; see \cite{MR2459454}, Theorem~6.9, page 96. We endow $\cM
_{\cX
}$ with the corresponding Borel $\sig$-field.

Fix some probability measure $m$ on $\cC([0, T], \cX)\times\cE
\times
[- \frac{1}{2}, \frac{1}{2} ]^{d}$ (endowed with its Borel $\sig
$-field) such that its marginal on $\cE\times [- \frac{1}{2},
\frac
{1}{2} ]^{d}$ is absolutely continuous w.r.t. $\mu(\mathrm
{d}\omega
)\otimes\mathrm{d}
x$. Thanks to a usual disintegration result (see, e.g., \cite{MR1932358},
Theorem~10.2.2) one can write $m$ as
\[
m(\mathrm{d}\tta, \mathrm{d}\omega, \mathrm{d}x)= m^{\omega,
x}(\mathrm{d}
\tta) \mu(\mathrm{d} \omega)\,\mathrm{d}x,
\]
where $(\omega, x)\mapsto m^{\omega, x}(\mathrm{d}\tta)$ is a measurable
map from
$\cE\times [- \frac{1}{2}, \frac{1}{2} ]^{d}$ (endowed with its
Borel $\sig$-field) into $\cM_{\cX}$. We consider the set $\cM$ of such
measures $m$ such that for all $(\omega, x)$, $m^{\omega, x}$
belongs to $\cM
_{\cX}$, endowed with the following metric:
%
%
\begin{definition}
\label{def:wass}
Fix $p$ to be equal to $2$ in the case of hypothesis (H1)
or as in \eqref{eq:p} in the case of hypothesis (H2).
Then define
%
%
\begin{eqnarray}
\forall m_{1}, m_{2}\in\cM
\nonumber
\\[-8pt]
\\[-8pt]
\eqntext{\displaystyle \delta_{T}(m_{1},
m_{2}):= \biggl[\int_{\cE\times [- {1}/{2}, {1}/{2} ]^{d}} \bigl(\delta
_{\cX
}^{(T)} \bigl(m_{1}^{\omega, x},
m_{2}^{\omega, x} \bigr) \bigr)^{p}\mu(\mathrm{d}\omega
)\,\mathrm{d}x \biggr]^{ {1}/{p}}.}
\end{eqnarray}
The space $\cM$ endowed with $\delta_{T}$ is a complete metric space;
see \cite{SznitSflour}, page~173.
\end{definition}

Note that, by construction [see \eqref{eq:nu0}], the initial condition
$\mathrm{d}\nu_{0}(\tta, \omega, x)= \zeta(\mathrm{d}\tta)\mu
(\mathrm{d}\omega
)\,\mathrm{d}x$ belongs to
$\cM$.

\subsection{The nonlinear process}
The proof of Proposition~\ref{prop:existencenut} is based on a Picard
iteration in the space $\cM$ endowed with the metric introduced in
Definition~\ref{def:wass}.
For fixed $\omega\in\cE$ and Brownian motion $B$ in $\cX$,
independent of
the sequence $(B_{k})_{k\geq1}$, and for a fixed $m\in\cM$, consider
the following stochastic differential equation in~$\cX$:
%
%
\begin{eqnarray}
\label{eq:odem} \mathrm{d}\tta(t) &=& c\bigl(\tta(t), \omega\bigr)\,\mathrm{d}t
\nonumber
\\[-8pt]
\\[-8pt]
\nonumber
&&{}+
\int\Gamma \bigl(\tta(t), \omega, \bar\tta,\bar\omega\bigr) \Psi(x, \bar x)
m_{t}(\mathrm{d}\bar\tta,\mathrm {d}\bar\omega, \mathrm{d}\bar x)\,
\mathrm{d}t + \sig\cdot\mathrm{d}B(t),
\end{eqnarray}
with initial condition $\tta(0)\sim\zeta$. Note here that for all
$t\geq0$, $m_{t}(\mathrm{d}\tta, \mathrm{d}\omega, \mathrm{d}x)$,
probability measure
on $\cX
\times\cE\times [- \frac{1}{2}, \frac{1}{2} ]^{d}$, stands for
the projection of $m$ at time $t$.
The integral term in \eqref{eq:odem} is well defined since
\begin{eqnarray*}
&&\int\bigl\| \Gamma\bigl(\tta(t), \omega, \bar\tta,\bar\omega\bigr) \bigr\| \Psi (x, \bar
x) m_{t}(\mathrm{d}\bar\tta,\mathrm{d}\bar\omega, \mathrm{d}\bar x)
\\
&&\qquad\leq\| \Gamma \|_{\infty} \int_{
[- {1}/{2}, {1}/{2} ]^{d}}\Psi(x, \bar x)
\underbrace {\int_{\cX\times\cE}m_{t}^{\bar\omega, \bar x}(
\mathrm{d}\bar\tta )\mu(\mathrm{d} \bar\omega )}_{=1}\,\mathrm{d}\bar x
\leq\| \Gamma \|_{\infty} S(\Psi),
\end{eqnarray*}
where the quantity
%
%
\begin{equation}
\label{eq:SPsi} S(\Psi):=\sup_{x} \int_{ [- {1}/{2}, {1}/{2}
]^{d}}
\Psi (x, \bar x)\,\mathrm{d}\bar x
\end{equation}
is smaller than $1$ in case of hypothesis (H1) and smaller
that $\cI_{1}(\Psi)$ [using \eqref{eq:discont}] in the case of
hypothesis (H2). Moreover, thanks to the regularity
properties of $\Gamma$ and $c$, equation \eqref{eq:odem} has a unique
(strong) solution.

Let us denote by $\Theta\dvtx\cM\to\cM$ the functional which maps any
measure $m(\mathrm{d}\tta, \mathrm{d}\omega, \mathrm{d}x)\in\cM$
to the law $\Theta
(m)$ of
$(\tta,
\omega, x)$ where $(\tta_t)_{0\leq t\leq T}$ is the unique solution to
\eqref{eq:odem}. Note that the functional $\Theta$ effectively
preserves the set $\cM$. Proposition~\ref{prop:existencenut} is a
direct consequence of the following lemma:
%
%
\begin{lemma}
\label{lem:fixedpoint}
The functional $\Theta$ admits a fixed point $\bar\nu$ in $\cM$.
\end{lemma}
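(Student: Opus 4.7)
The plan is to show that some iterate $\Theta^n$ is a strict contraction on the complete metric space $(\cM,\delta_T)$, after which the iterated Banach fixed point theorem applied to $\Theta^n$ yields a unique fixed point that is automatically a fixed point of $\Theta$. The preservation $\Theta(\cM)\subset\cM$ (already noted in the text) follows from standard moment estimates on~\eqref{eq:odem}, combining the one-sided Lipschitz condition~\eqref{eq:cgrowthcond}, the polynomial growth~\eqref{eq:polgrowthc}, the boundedness of $\Gamma$, and the moment hypotheses~\eqref{eq:asszeta}--\eqref{eq:assmu}.

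To establish the contraction, fix $m_1,m_2\in\cM$ and $(\om,x)$, and consider the \emph{synchronous coupling}: let $\tta^{(1)},\tta^{(2)}$ be the two solutions of~\eqref{eq:odem} driven by the same Brownian motion $B$ with the same initial condition $\tta(0)\sim\zeta$. Since $\sig$ is constant, the difference $Z_t:=\tta^{(1)}(t)-\tta^{(2)}(t)$ is absolutely continuous in $t$ with $Z_0=0$. Differentiating $\N{Z_t}^\kappa$, invoking \eqref{eq:cgrowthcond}, and decomposing
$$G(\tta^{(1)},m_1)-G(\tta^{(2)},m_2)=\bigl[G(\tta^{(1)},m_1)-G(\tta^{(2)},m_1)\bigr]+\bigl[G(\tta^{(2)},m_1)-G(\tta^{(2)},m_2)\bigr]$$
with $G(\tta,m):=\int\Gamma(\tta,\om,\bar\tta,\bar\om)\Psi(x,\bar x)\,m(d\bar\tta,d\bar\om,d\bar x)$: the first bracket is bounded by $\N{\Gamma}_{Lip}S(\Psi)\N{Z_t}$ using Lipschitz continuity of $\Gamma$ in $\tta$, while the second is bounded by
$$F(x,t):=\N{\Gamma}_{Lip}\int\delta_{\cX}^{(t)}\bigl(m_1^{\bar\om,\bar x},m_2^{\bar\om,\bar x}\bigr)\Psi(x,\bar x)\,\mu(d\bar\om)\,d\bar x$$
via the Kantorovich--Rubinstein duality applied to the $\N{\Gamma}_{Lip}$-Lipschitz map $\bar\tta\mapsto\Gamma(\tta^{(2)},\om,\bar\tta,\bar\om)$. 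A Young inequality absorbs the cross term $\N{Z_t}^{\kappa-1}F(x,t)\leq\tfrac{\kappa-1}{\kappa}\N{Z_t}^\kappa+\tfrac{1}{\kappa}F(x,t)^\kappa$ and Gronwall then yields the pathwise bound $\sup_{s\leq T}\N{Z_s}^\kappa\leq C_T\int_0^T F(x,s)^\kappa\,ds$.

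The main obstacle lies in controlling $F(x,s)$ in the presence of the singular weight $\Psi$. By Hölder's inequality with conjugate exponents $(p,q)$, with $p$ chosen as in~\eqref{eq:p},
$$F(x,s)\leq\N{\Gamma}_{Lip}\bigl(\textstyle\int\Psi(x,\bar x)^q\,d\bar x\bigr)^{1/q}\delta_s(m_1,m_2),$$
and one needs $\sup_x\N{\Psi(x,\cdot)}_{L^q}<\infty$: under Hypothesis~\ref{it:Pnearest} this is trivial since $\Psi$ is bounded, while under Hypothesis~\ref{it:powerlawinfd} the tailored choice~\eqref{eq:p} ensuring $q\alpha<d$ (see Remark~\ref{rem:p}) combined with the pointwise bound $\Psi(x,\bar x)\leq\cI_1(\Psi)\N{x-\bar x}^{-\alpha}$ furnishes the required integrability.

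Using $\delta_{\cX}^{(t)}(\Theta(m_1)^{\om,x},\Theta(m_2)^{\om,x})\leq(\bE\sup_{s\leq t}\N{Z_s}^\kappa)^{1/\kappa}$ from the synchronous coupling, raising to the $p$-th power, and integrating against the probability measure $\mu\otimes dx$ yields
$$\delta_t(\Theta(m_1),\Theta(m_2))^\kappa\leq C\int_0^t\delta_s(m_1,m_2)^\kappa\,ds.$$
Iterating this inequality $n$ times and using the monotonicity of $s\mapsto\delta_s(m_1,m_2)$ gives $\delta_T(\Theta^n(m_1),\Theta^n(m_2))^\kappa\leq\frac{(CT)^n}{n!}\delta_T(m_1,m_2)^\kappa$, a strict contraction for $n$ large enough, which concludes the proof.
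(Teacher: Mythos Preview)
Your proof is correct and follows essentially the same approach as the paper: synchronous coupling with identical noise and initial data, the one-sided Lipschitz condition~\eqref{eq:cgrowthcond} to handle $c$, the H\"older split with exponents $(p,q)$ (and the choice of $p$ from~\eqref{eq:p}) to make $\Psi^q$ integrable under Hypothesis~\ref{it:powerlawinfd}, Gronwall, and then iteration to get the factorial decay. The only cosmetic differences are that you differentiate $\N{Z_t}^\kappa$ directly and use Young's inequality on the cross term, whereas the paper first bounds $\N{Z_t}^2$ via Gronwall and then raises to the power $\kappa/2$; and you invoke the iterated Banach fixed-point theorem ($\Theta^n$ a contraction for large $n$), whereas the paper writes the equivalent Cauchy-sequence argument for $(\Theta^k(\nu_0))_{k\geq1}$ explicitly.
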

\begin{pf}
As in \cite{SznitSflour}, we prove the following:
%
%
\begin{eqnarray}
\label{eq:Thetacontr} \forall m_{1}, m_{2}\in\cM, \forall t\leq
T
\nonumber
\\[-8pt]
\\[-8pt]
\eqntext{\displaystyle\delta_t\bigl(\Theta(m_1), \Theta(m_2)
\bigr)^{\kappa}\leq C_T \int_0^t
\delta_u(m_1, m_2)^{\kappa
}\,
\mathrm{d}u.}
\end{eqnarray}
If \eqref{eq:Thetacontr} is proved, the proof of Proposition~\ref
{prop:existencenut} will be finished since in that case, one can
iterate this inequality and find
\[
\forall k\geq1 \qquad\delta_T\bigl(\Theta^{k+1}(
\nu_{0}), \Theta^k(\nu _{0})
\bigr)^{\kappa} \leq C_T^k\frac{T^k}{k!}
\delta_T\bigl(\Theta(\nu_{0}), \nu _{0}
\bigr)^{\kappa},
\]
which gives that $ ( \Theta^k(\nu_{0})  )_{k\geq1}$ is a
Cauchy sequence, and thus converges to some fixed-point $\bar\nu$ of
$\Theta$. Let us now prove \eqref{eq:Thetacontr}. The key calculation
is the following: there exists a constant $C>0$ such that for all
$\theta_{1}, \theta_{2}\in\cX$, $\omega\in\cE$, $x\in [-
\frac{1}{2},
\frac{1}{2} ]^{d}$, for all $m_{1}, m_{2}\in\cM$,
%
%
\begin{eqnarray}
\label{aux:GammaTheta} \delta\Gamma&:=&\biggl\| \int\Gamma (\theta_{1}, \omega,
\cdot, \cdot ) \Psi(x, \cdot) \,\mathrm{d}m_{1, t} - \int\Gamma (
\theta_{2}, \omega, \cdot, \cdot ) \Psi(x, \cdot) \,\mathrm
{d}m_{2, t} \biggr\|
\nonumber
\\[-8pt]
\\[-8pt]
\nonumber
&\leq &C \bigl(\| \tta _{2}-\tta_{1} \|\wedge1 +
\delta_{t}(m_{1}, m_{2}) \bigr).
\end{eqnarray}
Indeed,
%
%
\begin{eqnarray}
\label{aux:Theta1} \delta\Gamma&\leq&\biggl\| \int\Gamma (\theta_{1}, \omega,
\cdot, \cdot ) \Psi(x, \cdot) \,\mathrm{d}m_{1, t} - \int\Gamma (\theta
_{2}, \omega, \cdot, \cdot ) \Psi(x, \cdot) \,\mathrm{d}m_{1, t}
\biggr\|
\nonumber\\
&&{} +\biggl\| \int\Gamma (\theta_{2}, \omega, \cdot, \cdot ) \Psi(x, \cdot)
\,\mathrm{d}m_{1, t} - \int\Gamma (\theta_{2}, \omega, \cdot,
\cdot ) \Psi(x, \cdot) \,\mathrm{d}m_{2, t} \biggr\|\\
&:=& \delta
\Gamma_{1} + \delta \Gamma_{2}.
\nonumber
\end{eqnarray}
The first term $\delta\Gamma_{1}$ in \eqref{aux:Theta1} is easily
bounded by $\| \Gamma \|_{\mathrm{Lip}}S(\Psi)\| \tta_{2}- \tta
_{1} \|$, where
$S(\Psi)$ is defined by \eqref{eq:SPsi}. The second term $\delta
\Gamma
_{2}$ in \eqref{aux:Theta1} can be successively bounded by
\begin{eqnarray*}
\delta\Gamma_{2}&=& \biggl\| \int_{ [- {1}/{2}, {1}/{2}
]^{d}\times\cE} \Psi(x, \bar x)
\biggl(\int\Gamma (\theta _{2}, \omega, \bar\tta, \bar\omega )
m_{1, t}^{\bar\omega,
\bar x}(\mathrm{d} \bar\tta) \\
&&\hspace*{108pt}{}- \int \Gamma (
\theta_{2}, \omega, \bar\tta, \bar\omega ) m_{2, t}^{\bar\omega, \bar
x}(
\mathrm{d}\bar\tta) \biggr)\,\mathrm{d}\bar x\mu(\mathrm{d}\bar \omega) \biggr\|
\\
&\leq& \biggl(\int_{ [- {1}/{2}, {1}/{2} ]^{d}} \Psi(x, \bar x)^{q}\,
\mathrm{d}\bar x \biggr)^{ {1}/{q}}
\\
&&{}\times \biggl(\int_{ [- {1}/{2}, {1}/{2} ]^{d}\times\cE} \biggl\| \int \Gamma (\theta_{2},
\omega, \bar\tta, \bar\omega ) m_{1, t}^{\bar\omega, \bar x}(\mathrm{d}\bar
\tta) \\
&&\hspace*{85pt}{}- \int\Gamma (\theta_{2}, \omega, \bar \tta, \bar\omega )
m_{2,
t}^{\bar\omega, \bar x}(\mathrm{d}\bar\tta)\biggr \|^{p}\,
\mathrm{d}\bar x\mu(\mathrm{d}\bar \omega) \biggr)^{ {1}/{p}}.
\end{eqnarray*}
Note that the first term in the last inequality is always bounded: it
is straightforward in the $P$-nearest-neighbor case and comes from
Remark~\ref{rem:p} in the power-law case. Indeed, $q$ has been
precisely chosen so that $q\alpha<d$, so that $\Psi(x, \cdot)^{q}$ is
integrable.

Using the Lipschitz-continuity of $\Gamma$, we see that, for \emph{any}
coupling\break  $m^{\omega, x}(\mathrm{d}\vartheta_{1},  \mathrm
{d}\vartheta_{2})$ of
$m_{1}^{\omega, x}$ and $m_{2}^{\omega, x}$,
\begin{eqnarray*}
\delta\Gamma_{2}&\leq& C \| \Gamma \|_{\mathrm{Lip}} \biggl(\int
_{ [-
{1}/{2}, {1}/{2} ]^{d}\times\cE} \bigl(\bE_{m^{\omega,
x}}\bigl\| \vartheta
_{1}(t)-\vartheta_{2}(t)\bigr \| \bigr)^{p}\,\mathrm
{d}\bar x\mu(\mathrm{d}\bar\omega ) \biggr)^{{1}/{p}}
\\
&\leq& C \| \Gamma \|_{\mathrm{Lip}} \biggl(\int_{ [- {1}/{2}, {1}/{2}
]^{d}\times\cE}
\bigl( \bigl[\bE_{m^{\omega, x}}\bigl\| \vartheta _{1}(t)-\vartheta
_{2}(t) \bigr\|^{\kappa} \bigr]^{ {1}/{\kappa
}} \bigr)^{p}
\,\mathrm{d}\bar x\mu(\mathrm{d} \bar\omega) \biggr)^{ {1}/{p}}.
\end{eqnarray*}
By Definition~\ref{def:wass}, this gives $\delta\Gamma_{2}\leq C \|
\Gamma \|_{\mathrm{Lip}} \delta_{t}(m_{1}, m_{2})$, which proves
\eqref
{aux:GammaTheta}.
We are now in position to prove \eqref{eq:Thetacontr}. Let us consider
$(\tta_{1}, \omega, x)$ and $(\tta_2, \omega, x)$ solutions to
\eqref
{eq:odem} for two different measures $m_{1}$ and $m_{2}$ in $\cM$
driven by the same Brownian motion, with the same initial condition. We
have for all $0\leq t\leq T$,
\begin{eqnarray*}
&&\bigl\| \tta_{1}(t)-\tta_{2}(t) \bigr\|^{2}\\
&&\qquad= 2\int
_0^t \bigl\langle\tta _{1}(s)-\tta
_{2}(s), c\bigl(\tta_{1}(s), \omega\bigr)-c\bigl(
\tta_{2}(s), \omega \bigr)\bigr\rangle\,\mathrm{d}s
\\
&&\quad\qquad{}+ 2\int_{0}^{t} \biggl\langle
\tta_{1}(s)-\tta_{2}(s), \int\Gamma \bigl(
\theta_{1}(s), \omega, \cdot, \cdot \bigr) \Psi(x, \cdot) \,
\mathrm{d}m_{1} \\
&&\quad\qquad\hspace*{88pt}{}- \int\Gamma \bigl(\theta _{2}(s), \omega,
\cdot, \cdot \bigr) \Psi(x, \cdot) \,\mathrm{d} m_{2}\biggr\rangle\,
\mathrm{d}s.
\end{eqnarray*}
Using the one-sided Lipschitz condition \eqref{eq:cgrowthcond} and
\eqref{aux:GammaTheta}, we obtain
\begin{eqnarray*}
&&\bigl\| \tta_{1}(t)-\tta_{2}(t) \bigr\|^{2}\\
&&\qquad\leq C \int
_0^t \bigl\| \tta _{1}(s)-\tta
_{2}(s) \bigr\|^{2} \,\mathrm{d}s + C\int_0^t
\bigl\| \tta _{1}(s)-\tta_{2}(s)\bigr \| \delta _{s}(m_{1},
m_{2})\,\mathrm{d}s
\\
&&\qquad\leq C \int_0^t \bigl\| \tta_{1}(s)-
\tta_{2}(s)\bigr \|^{2}\,\mathrm{d}s + C\int_0^t
\delta _{s}(m_{1}, m_{2})^{2}\,
\mathrm{d}s.
\end{eqnarray*}
Consequently, using Gronwall's lemma,
\[
\sup_{s\leq t}\bigl\| \tta_{1}(s)-\tta_{2}(s)
\bigr\|^{2}\leq C e^{CT}\int_0^t
\delta_{s}(m_{1}, m_{2})^{2}\,
\mathrm{d}s.
\]
Elevating this inequality to the power $ \frac{\kappa}{2}\geq1$ gives
\begin{eqnarray*}
\sup_{s\leq t}\bigl\| \tta_{1}(s)-\tta_{2}(s)
\bigr\|^{\kappa}&\leq& \bigl(C e^{CT} \bigr)^{ {\kappa}/{2}} \biggl(\int
_0^t \delta_{s}(m_{1},
m_{2})^{2}\,\mathrm{d}s \biggr)^{ {\kappa}/{2}}
\\
&\leq& \bigl(C e^{CT} \bigr)^{ {\kappa}/{2}} T^{ {(\kappa-2)}/{2}}\int
_0^t \delta_{s}(m_{1},
m_{2})^{\kappa}\,\mathrm{d}s,
\end{eqnarray*}
which gives
\[
\delta_{\cX}^{(t)}\bigl(\Theta(m_{1})^{\omega, x},
\Theta (m_{2})^{\omega,
x}\bigr)\leq \bigl(C e^{CT}
\bigr)^{ {1}/{2}} T^{ {(\kappa
-2)}/{2\kappa}} \biggl(\int_0^t
\delta_s(m_1, m_2)^{\kappa}\,
\mathrm {d}s \biggr)^{
{1}/{\kappa}}.
\]
Elevating this inequality to the power $p$ and integrating over
$\omega$
and $x$ leads to the desired result \eqref{eq:Thetacontr}. Lemma~\ref
{lem:fixedpoint} is proved.
\end{pf}
We are now in position to prove Proposition~\ref{prop:existencenut}.
\begin{pf*}{Proof of Proposition~\ref{prop:existencenut}}
It remains to prove that if $\bar\nu$ is a fixed point of $\Theta$,
then $\bar\nu$ is a solution to the weak formulation of the continuous
limit \eqref{eq:nut}. Indeed if $\bar\nu=\Theta(\bar\nu)$, one can
write $\bar\nu(\mathrm{d}\tta, \mathrm{d}\omega, \mathrm{d}x)=
\bar\nu^{\omega,
x}(\mathrm{d}\tta)\mu
(\mathrm{d}\omega)\,\mathrm{d}x$ where, for fixed $\omega, x$, $\bar
\nu
^{\omega, x}(\mathrm{d}\tta)$
is the law of the process solution to \eqref{eq:odem}. Applying It\^o's
formula, one obtains for all $f(\tta, \omega, x)$, $\cC^{2}$ w.r.t.
$\tta$
with bounded derivatives,
%
%
\begin{eqnarray}
\label{eq:Itofixedpoint} f\bigl(\tta(t), \omega, x\bigr)&=& f(\tta_{0}, \omega,
x) + \frac{1}{2} \int_{0}^{t} \div
_{\tta} \bigl(\sig\sig^{T}\nabla_{\tta}f \bigr)
\bigl(\tta(s), \omega, x\bigr)\,\mathrm{d}s\nonumber \\
&&{}+\int_{0}^{t}
\nabla_{\tta}f \cdot c\bigl(\tta(s), \omega\bigr)\,\mathrm {d}s
\nonumber
\\[-8pt]
\\[-8pt]
\nonumber
&&{}+ \int_{0}^{t}\nabla_{\tta}f \cdot\int
\Gamma\bigl(\tta(t), \omega, \bar \tta,\bar\omega \bigr) \Psi(x, \bar x) \bar
\nu_{t}^{\bar\omega, \bar x}(\mathrm{d}\bar \tta )\mu(\mathrm{d}\bar \omega)
\,\mathrm{d}\bar x\,\mathrm{d}s \\
&&{}+ \int_{0}^{t}
\nabla_{\tta} f\bigl(\tta(s), \omega, x\bigr) \cdot (\sig\,
\mathrm{d}B_{s}).\nonumber
\end{eqnarray}
Taking the expectation in \eqref{eq:Itofixedpoint} leads to \eqref
{eq:nut}. But in order to do so, we need to know that the term $\nabla
_{\tta}f(\tta, \omega, x) \cdot c(\tta, \omega)$ is integrable
w.r.t. the
measure $\bar\nu^{\omega, x}(\mathrm{d}\tta)\mu(\mathrm{d}\omega
)\,\mathrm{d}x$
(the other terms
are integrable, by assumptions on $f$). This is ensured by \eqref
{eq:asszeta}, the fact that (by construction) $\bar\nu^{\omega,
x}(\mathrm{d}\tta
)$ has finite moments up to order $\kappa$, and the fact that $\mu$ has
finite moment of order $\iota$; recall \eqref{eq:assmu}.
\end{pf*}

The rest of the document is devoted to provide a proof for
Theorems \ref
{theo:LLNPnn} and~\ref{theo:LLNpowerinfd}.

\section{Definition and properties of the propagator}
\label{sec:propagator}
For reasons that will be made clear in Remark~\ref{rem:backKolm} below,
we make in this section, as well as in Sections~\ref{sec:Pnn} and \ref
{sec:PLinfd}, some supplementary assumption on the regularity on the
dynamics $c$:

%
\begin{assmp}[(Additional regularity on $c$)]
\label{ass:cGlobal}
We assume that for all $\omega$, the function $\tta\mapsto c(\tta,
\omega)$
is globally Lispchitz continuous.
\end{assmp}
Of course, the FitzHugh--Nagumo case does not enter into the framework
of Assumption~\ref{ass:cGlobal}. Assumption~\ref{ass:cGlobal} is made
in order to ensure the existence of a backward Kolmogorov equation; see
Remark~\ref{rem:backKolm}. The purpose of Section~\ref{sec:loclip} will
be to discard this assumption.

In this section, the function $\Psi$ is either defined as in
hypotheses (H1) or as in~(H2).
We know from Proposition~\ref{prop:existencenut} that there exists at
least one measure-valued solution $t\mapsto\nu_{t}$ to the continuous
equation \eqref{eq:nut}. We fix once and for all one such solution. We
can then consider the stochastic differential equation
%
%
\begin{eqnarray}
\label{eq:odemeanfield} \mathrm{d}\tta(t) &=& c\bigl(\tta(t),
 \omega\bigr)\,\mathrm{d}t \nonumber\\
 &&{}+
\int\Gamma \bigl(\tta(t), \omega, \bar\tta,\bar\omega\bigr) \Psi(x, \bar x)
\nu_{t}(\mathrm{d}\bar\tta,\mathrm {d}\bar \omega, \mathrm{d}\bar x)\,
\mathrm{d} t + \sig\cdot\mathrm{d}B(t)
\\
&=:& c\bigl(\tta(t), \omega\bigr)\,\mathrm{d}t + v\bigl(t, \tta(t), \omega, x
\bigr)\,\mathrm{d}t+ \sig \cdot\mathrm{d}B(t),
\nonumber
\end{eqnarray}
where $\tta(0)\sim\zeta$. Thanks to the regularity properties of
$\Gamma
$ and $c$ and to the integrability of $\Psi$, \eqref{eq:odemeanfield}
has a unique solution. Define the propagator corresponding to~\eqref
{eq:odemeanfield}
%
%
\begin{equation}
\label{eq:propagator} \forall s,t\in[0, T]\qquad P_{s,t}f(\tta, \omega, x):=
\bE_{B} f\bigl(\Phi _{s}^{t}(\tta; \omega, x),
\omega, x\bigr),
\end{equation}
where $\bE_{B}$ is the expectation w.r.t. the Brownian motion $B$, $f$ is
a bounded measurable function on $\cX\times\cE\times [- \frac{1}{2},
\frac{1}{2} ]^{d}$, $0\leq s\leq t$ and $t\mapsto\Phi
_{s}^{t}(\tta;
\omega, x)$ is the unique solution to \eqref{eq:odemeanfield} such that
$\Phi_{s}^{s}(\tta; \omega, x)=\tta$.
%
%
\begin{remark}
\label{rem:backKolm}
If $f$ is $\cC^{2}$ w.r.t. the variable $\tta$, under Assumptions
\ref
{ass:Gammac} and \ref{ass:cGlobal} made about $c$ and $\Gamma$, it is
standard to see that the function $P_{s, t}f$ is of class $\cC^{2}$ in
$\tta$ and $\cC^{1}$ in $s$ and satisfies the backward Kolmogorov
equation (see, e.g., \cite{doi10.108007362999808809576}, Remark~2.3)
%
%
\begin{eqnarray}
\label{eq:backKolm} &&\forall(\tta, \omega, x, s, t)\qquad \partial_{s}
P_{s, t}f (\tta, \omega, x) + \tfrac{1}{2} \div_{\tta}
\bigl(\sig\sig^{T}\nabla_{\tta
}P_{s,t} \bigr) (\tta,
\omega, x)
\nonumber\\
&&\quad\hspace*{85pt}{}
+ \bigl( \bigl[c(\tta, \omega) + v(t, \tta, \omega, x) \bigr]\cdot
\nabla_{\tta} \bigr)P_{s,t}f(\tta, \omega, x) \\
&&\qquad\hspace*{85pt}{}=0.\nonumber
\end{eqnarray}
The main problem which motivates the work of Section~\ref{sec:loclip}
at the end of this paper is that proving similar Kolmogorov when
Assumption~\ref{ass:cGlobal} is discarded appears to be difficult; see,
in particular, the recent work in this direction \cite{1209.6035}.
Nevertheless, we work in this section under this additional hypothesis,
and we provide in Section~\ref{sec:loclip} a way to bypass this
technical difficulty.
\end{remark}
The key calculation of this work is the object of Lemma~\ref{lem:varf}:
%
%
\begin{lemma}
\label{lem:varf}
Let $f\dvtx\cX\times\cE\times [- \frac{1}{2}, \frac
{1}{2}
]^{d}\to\bR$ be $\cC^{2}$ w.r.t. the variable $\tta$. Then
%
%
\begin{eqnarray}
\label{eq:varf} \bigl\langle f, \nu_{T}^{(N)}-
\nu_{T}\bigr\rangle &=& \bigl\langle P_{0, T}f, \nu
_{0}^{(N)}-\nu _{0}\bigr\rangle \nonumber\\
&&{}+
\frac{1}{ \llvert \gL_{N} \rrvert } \sum_{k}\int
_{0}^{T} \nabla_{\tta}
(P_{t, T}f ) \bigl(\tta_{k}(t), \omega_{k},
x_{k} \bigr) \cdot\sig\,\mathrm{d}B_{k}(t)
\nonumber
\\[-8pt]
\\[-8pt]
\nonumber
&&{}+ \frac{1}{ \llvert
\gL_{N}
\rrvert } \sum_{k}\int
_{0}^{T} \nabla_{\tta}
(P_{t,
T}f ) \bigl(\tta_{k}(t), \omega_{k},
x_{k} \bigr) \\
&&\hspace*{70pt}{}\times  \bigl[\bigl\langle\Gamma (\tta_{k},
\omega_{k}, \cdot, \cdot )\Psi (x_{k}, \cdot ),
\nu_{t}^{(N)} - \nu_{t}\bigr\rangle \bigr]\,
\mathrm{d}t.\nonumber
\end{eqnarray}
\end{lemma}
\begin{pf}
An application of It\^o's formula gives the following: for all $k$ and $0<t<T$,
\begin{eqnarray*}
P_{t, T}f \bigl(\tta_{k}(t), \omega_{k},
x_{k} \bigr) &=& P_{0, T}f \bigl(\tta_{k}(0),
\omega_{k}, x_{k} \bigr) + \int_{0}^{t}
\partial _{s} P_{s,
T}f \bigl(\tta_{k}(s),
\omega_{k}, x_{k} \bigr) \,\mathrm{d}s
\\
&&{}+ \int_{0}^{t} \nabla_{\tta}
P_{s, T}f \bigl(\tta_{k}(s), \omega_{k},
x_{k} \bigr) \cdot \,\mathrm{d}\tta_{k}(s)\\
&&{} +
\frac{1}{2} \int_{0}^{t}
\div_{\tta} \bigl(\sig \sig ^{T} \nabla_{\tta}P_{s, T}f
\bigr) \bigl(\tta_{k}(s), \omega_{k}, x_{k}
\bigr) \,\mathrm{d}s.
\end{eqnarray*}
Using the definition of $\tta_{k}$ [recall \eqref{eq:odegene}] and
\eqref{eq:backKolm} we obtain
\begin{eqnarray*}
&&P_{t, T}f \bigl(\tta_{k}(t), \omega_{k},
x_{k} \bigr)\\
&&\qquad= P_{0, T}f \bigl(\tta_{k}(0),
\omega_{k}, x_{k} \bigr) \\
&&\qquad\quad{}- \int_{0}^{t}
v\bigl(s, \tta _{k}(s), \omega _{k}, x_{k}
\bigr)\cdot\nabla_{\tta} P_{s, T}f \bigl(\tta_{k}(s),
\omega_{k}, x_{k} \bigr) \,\mathrm{d}s
\\
&&\qquad\quad{}+ \int_{0}^{t} \nabla_{\tta}
P_{s, T}f \bigl(\tta _{k}(s), \omega_{k},
x_{k} \bigr) \cdot\bigl\langle\Gamma (\tta _{k}, \omega
_{k}, \cdot, \cdot )\Psi (x_{k}, \cdot ), \nu
_{s}^{(N)}\bigr\rangle\,\mathrm{d}s
\\
&&\qquad\quad{}+ \int_{0}^{t} \nabla_{\tta}
P_{s,t} f\bigl(\tta_{k}(s), \omega_{k},
x_{k}\bigr)\cdot\bigl(\sig\,\mathrm{d}B_{k}(s)\bigr).
\end{eqnarray*}
Then, using the definition of $v(\cdot)$ [recall \eqref
{eq:odemeanfield}] and summing over $k$ lead to
\begin{eqnarray*}
\bigl\langle P_{t, T}f, \nu_{t}^{(N)}\bigr\rangle
&=& \bigl\langle P_{0, T}f, \nu _{0}^{(N)}\bigr
\rangle + \frac
{1}{ \llvert \gL_{N} \rrvert }\sum_{k} \int
_{0}^{t} \nabla _{\tta}
P_{s,t} f\bigl(\tta_{k}(s), \omega_{k},
x_{k}\bigr)\cdot\bigl(\sig\,\mathrm {d}B_{k}(s)\bigr)
\\
&&{}+ \frac
{1}{ \llvert \gL_{N} \rrvert }\sum_{k}\int
_{0}^{t}\nabla _{\tta
}P_{s, T}f
\bigl(\tta_{k}(s), \omega_{k}, x_{k} \bigr)
\\
&&\hspace*{69pt}{}\times \bigl\langle \Gamma (\tta_{k}, \omega_{k}, \cdot,
\cdot )\Psi (x_{k}, \cdot ), \nu_{s}^{(N)} -
\nu_{s}\bigr\rangle \,\mathrm{d}s.
\end{eqnarray*}
A straightforward calculation using \eqref{eq:backKolm} shows that
$\partial_{t}\langle P_{t, T}f, \nu_{t}\rangle=0$. Using this and
the previous
equality, one obtains the desired result (choose $t=T$ and recall that
$P_{T,T}f=f$). Lemma~\ref{lem:varf} is proved.
\end{pf}
The purpose of the following lemma is to establish regularity
properties of the propagator $P_{t, T}$:
%
%
\begin{lemma}[(Estimates on the propagator $P_{t, T}$)]
\label{lem:propagatorLip}
Fix $T>0$, $0<t<T$ and $a\in [- \frac{1}{2}, \frac{1}{2} ]^{d}$.
\begin{longlist}[(1)]
\item[(1)] Assume $\Psi$ satisfies hypothesis \textup{(H1)}. For
any $R\in(0, 1]$ and any $f$ in $\cC_{R, a}$, $P_{t, T}f$ is also in
$\cC_{R, a}$, and one has the following estimate:
%
%
\begin{equation}
\label{eq:propLipPnn} \| P_{t, T}f \|_{R, a}\leq\sqrt{2}e^{|\!|\!|P|\!|\!|(T-t)}
\| f \| _{R, a}
\end{equation}
for some constant $|\!|\!|P|\!|\!|$ [that can be chosen equal to $L +
3/2 \| \Gamma \|_{\mathrm{Lip}}$; recall~\eqref{eq:cgrowthcond}].
\item[(2)] Assume $\Psi$ satisfies hypothesis \textup{(H2)}.
For every $a\in [- \frac{1}{2}, \frac{1}{2} ]^{d}$, for any $f$
in $\cC_{a}$, $P_{t, T}f$ is also in $\cC_{a}$, and one has the
following estimate:
%
%
\begin{equation}
\label{eq:propLipPLinfd} \| P_{t, T}f \|_{a}\leq|\!|\!|P|\!|
\!|e^{|\!|\!|P|\!|\!|(T-t)} \| f \|_{a}
\end{equation}
for some constant $|\!|\!|P|\!|\!|$ (that only depends on $\Gamma$,
$\Psi$ and $c$).
\end{longlist}
\end{lemma}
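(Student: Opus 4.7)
Both estimates rest on standard Gronwall-type sensitivity analysis of the flow $\Phi_{t}^{s}$ of \eqref{eq:odemeanfield}, but applied to three different perturbations: the initial state $\tta$, the disorder $\om$, and (in the power-law case) the parameter $x$. Throughout, I would couple the perturbed flows through the same Brownian motion.

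\textbf{Step 1 (flow sensitivity in $(\tta,\om)$ at fixed $x$).} Writing $\Phi^{s} := \Phi_{t}^{s}(\tta;\om,x)$, $\bar\Phi^{s} := \Phi_{t}^{s}(\bar\tta;\bar\om,x)$, Ito applied to $\N{\Phi^{s}-\bar\Phi^{s}}^{2}$ (the martingale and Ito-correction terms cancel since $\sigma$ is the same) combined with the one-sided Lipschitz bound \eqref{eq:cgrowthcond} on $c$, the global Lipschitzness of $\Gamma$, and the fact that $\int\Psi(x,\cdot)\,\dd\bar x\leq S(\Psi)<\infty$ (with $S(\Psi)$ as in \eqref{eq:SPsi}; bounded by $1$ under~\ref{it:Pnearest} and by $\cI_{1}(\Psi)$ under~\ref{it:powerlawinfd}), yields
\begin{equation*}
\tfrac{\dd}{\dd s}\bE\N{\Phi^{s}-\bar\Phi^{s}}^{2}\leq C_{1}\bE\N{\Phi^{s}-\bar\Phi^{s}}^{2}+C_{2}\N{\om-\bar\om}^{2},
\end{equation*}
and Gronwall gives $\bE\N{\Phi^{T}-\bar\Phi^{T}}^{2}\leq C\,e^{C(T-t)}(\N{\tta-\bar\tta}^{2}+\N{\om-\bar\om}^{2})$, where the constant $C_{1}$ can be tracked as $L+3/2\,\N{\Gamma}_{\mathrm{Lip}}$ using $2ab\leq a^{2}+b^{2}$.

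\textbf{Step 2 (the $P$-nearest neighbor case).} Since $f(\tta,\om,x)=g(\tta,\om)\chi_{R}(x-a)$, the factor $\chi_{R}(x-a)$ is independent of the Brownian motion and factors out:
\begin{equation*}
P_{t,T}f(\tta,\om,x)=\chi_{R}(x-a)\,\bE g(\Phi_{t}^{T}(\tta;\om,x),\om).
\end{equation*}
Using the Lipschitz bound on $g$ (which equals $\N{f}_{R,a}$), Step~1, Cauchy--Schwarz, and $\sqrt{a^{2}+b^{2}}\leq\N{\tta-\bar\tta}+\N{\om-\bar\om}$, one obtains the desired bound \eqref{eq:propLipPnn}, where the factor $\sqrt{2}$ absorbs the extra $\N{\om-\bar\om}$ term picked up from $f$ being Lipschitz in $\om$.

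\textbf{Step 3 (the power-law case, three contributions to $\N{P_{t,T}f}_{a}$).} The uniform bound \eqref{eq:falphabounded} and the $(\tta,\om)$-Lipschitz bound \eqref{eq:fLipttaom} transfer from $f$ to $P_{t,T}f$ essentially verbatim: the first by Jensen, the second by Step~1, after multiplication by $\N{x-a}^{\alpha}$. The third, namely the $(2\gamma-\alpha)\wedge 1$-H\"older bound in $x$, is the only delicate point. Decompose, with $\Phi^{x}:=\Phi_{t}^{T}(\tta;\om,x)$:
\begin{equation*}
\N{x-a}^{2\gamma}P_{t,T}f(\cdot,x)-\N{y-a}^{2\gamma}P_{t,T}f(\cdot,y)=I_{1}+I_{2},
\end{equation*}
where $I_{2}=\bE[\N{x-a}^{2\gamma}f(\Phi^{y},\om,x)-\N{y-a}^{2\gamma}f(\Phi^{y},\om,y)]$ is directly bounded by $\N{f}_{a}\N{x-y}^{(2\gamma-\alpha)\wedge 1}$ via \eqref{eq:falphaHolder}, whereas
\begin{equation*}
|I_{1}|\leq\N{x-a}^{2\gamma-\alpha}\,\N{f}_{a}\,\bE\N{\Phi^{x}-\Phi^{y}}
\end{equation*}
via the weighted Lipschitz property \eqref{eq:fLipttaom}.

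\textbf{Step 4 (flow sensitivity in $x$, the main obstacle).} The residual task is to bound $\N{x-a}^{2\gamma-\alpha}\,\bE\N{\Phi^{x}-\Phi^{y}}$ by $C\N{x-y}^{(2\gamma-\alpha)\wedge 1}$. By Gronwall applied to the flow difference, this reduces to bounding the drift difference
\begin{equation*}
v(s,\tta,\om,x)-v(s,\tta,\om,y)=\int\Gamma(\tta,\om,\bar\tta,\bar\om)[\Psi(x,\bar x)-\Psi(y,\bar x)]\nu_{s}(\dd\bar\tta,\dd\bar\om,\dd\bar x),
\end{equation*}
in the norm weighted by $\N{x-a}^{2\gamma-\alpha}$. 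The naive bound using \eqref{eq:intpsi} gives only the exponent $(d-\alpha)\wedge 1$. The sharper exponent $(2\gamma-\alpha)\wedge 1$ required by \eqref{eq:falphaHolder} is obtained precisely by invoking~\eqref{eq:holderpsi}: after isolating the singular contribution of $\bar x$ near $a$, the quantity $\N{x-a}^{2\gamma}\Psi(x,\bar x)-\N{y-a}^{2\gamma}\Psi(y,\bar x)$ is H\"older in $x$ with exponent $(2\gamma-\alpha)\wedge 1$ by assumption, and this H\"older exponent propagates through the Gronwall estimate. The main obstacle is thus to organize this decomposition cleanly so that the weight $\N{x-a}^{2\gamma-\alpha}$ is absorbed using exactly the structural assumption \eqref{eq:holderpsi}, explaining why this seemingly ad hoc Hölder condition is built into Assumption~\ref{ass:psi} and why the threshold $\gamma=d/2$ appears when $\alpha\geq d/2$.
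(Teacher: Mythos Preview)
Your Steps~1--3 match the paper's approach closely and are correct. The issue is in Step~4, where you have overcomplicated what is actually the easiest part of the argument, and in doing so misidentified the role of assumption~\eqref{eq:holderpsi}.

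You write that the ``naive'' bound via $\cI_{2}(\Psi)$ gives only the exponent $(d-\alpha)\wedge 1$, which you claim is insufficient, and that one must invoke $\cI_{3}(\Psi)$ to reach $(2\gamma-\alpha)\wedge 1$. This is backwards. By the very definition of $\gamma$ in~\eqref{eq:assgamma} one always has $\gamma\leq d/2$, hence $2\gamma-\alpha\leq d-\alpha$ and therefore $(d-\alpha)\wedge 1\geq(2\gamma-\alpha)\wedge 1$. Since $x,y$ range over the bounded cube $[-\tfrac12,\tfrac12]^{d}$, the bound $\N{x-y}^{(d-\alpha)\wedge 1}\leq C\,\N{x-y}^{(2\gamma-\alpha)\wedge 1}$ follows trivially. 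The paper does exactly this: it uses only $\cI_{2}(\Psi)$ in the Gronwall estimate for $\N{\Phi^{x}-\Phi^{y}}$, obtains the exponent $(d-\alpha)\wedge 1$, and then observes that this dominates $(2\gamma-\alpha)\wedge 1$. The prefactor $\N{x-a}^{2\gamma-\alpha}$ in your $I_{1}$ is simply bounded (since $2\gamma-\alpha>0$) and absorbed into the constant; no weighted estimate on the drift is needed.

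Moreover, your proposed use of $\cI_{3}(\Psi)$ does not work as sketched: that hypothesis controls $\N{x-\bar a}^{2\gamma}\Psi(x,\bar a)-\N{y-\bar a}^{2\gamma}\Psi(y,\bar a)$ with the weight centered at the \emph{second} argument of $\Psi$, whereas in the drift difference you integrate over $\bar x$ and the weight $\N{x-a}^{2\gamma}$ you need is centered at the fixed point $a$ coming from the test function, not at $\bar x$. Assumption~\eqref{eq:holderpsi} is not used in this lemma at all; its role in the paper is later (proof of Proposition~\ref{prop:distanceKN}), to check that the interaction kernel $(\tta,\om,x)\mapsto\Gamma(\tta_{k},\om_{k},\tta,\om)\Psi(x_{k},x)$ itself belongs to the test-function class $\cC_{x_{k}}$.
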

\begin{pf}
Note that, by a usual density argument, one only needs to prove~\eqref
{eq:propLipPnn} and \eqref{eq:propLipPLinfd} for test functions $f$
that are $\cC^{2}$ w.r.t. $\tta$.
Fix $T>0$, $0<t<T$, $a\in [- \frac{1}{2}, \frac{1}{2} ]^{d}$ and
consider two different flows for \eqref{eq:odemeanfield} $\Phi
_{s}^{t}(\tta_{i}; \omega_{i}, x)$, for $i=1,2$, with different initial
condition and parameter but at the same site $x$, with the same
Brownian motion. For simplicity,\vadjust{\goodbreak} we write $\Phi_{s}^{t}(i)$ instead of
$\Phi_{s}^{t}(\tta_{i}; \omega_{i}, x)$. Then, using the one-sided
Lipschitz condition \eqref{eq:cgrowthcond} on $c$, we obtain
\begin{eqnarray*}
&&\bigl\| \Phi_{s}^{t}(2)-\Phi_{s}^{t}(1)
\bigr\|^{2}\\
&&\qquad= \| \tta_{2}-\tta _{1} \| ^{2}
+ 2 \int_{s}^{t} \bigl\langle
\Phi_{s}^{u}(2)-\Phi_{s}^{u}(1), c
\bigl(\Phi _{s}^{u}(2), \omega_{2}\bigr) - c
\bigl(\Phi_{s}^{u}(1), \omega_{1}\bigr)\bigr
\rangle \,\mathrm{d}u
\\
&&\qquad\quad{}+ 2 \int_{s}^{t}\bigl\langle
\Phi_{s}^{u}(2)-\Phi_{s}^{u}(1), v
\bigl(u, \Phi _{s}^{u}(2), \omega_{2}, x\bigr) -
v\bigl(u, \Phi_{s}^{u}(1), \omega _{1}, x\bigr)
\bigr\rangle\,\mathrm{d}u
\\
&&\qquad\leq\| \tta_{2}-\tta_{1} \|^{2} + 2L \int
_{s}^{t} \bigl(\bigl\| \Phi _{s}^{u}(2)-
\Phi_{s}^{u}(1)\bigr \|^{2} + \| \omega_{2}-
\omega_{1} \|^{2} \bigr)\,\mathrm{d}u
\\
&&\qquad\quad{}+ 2 \int_{s}^{t}\bigl\| \Phi_{s}^{u}(2)-
\Phi_{s}^{u}(1) \bigr\| \underbrace {\bigl\| v\bigl(u,
\Phi_{s}^{u}(2), \omega_{2}, x\bigr) - v\bigl(u,
\Phi_{s}^{u}(1), \omega_{1}, x\bigr)
\bigr\|}_{:=\delta v(u)}\,\mathrm{d}u,
\end{eqnarray*}
where the definition of $v(\cdot)$ is given in \eqref{eq:odemeanfield}.
The Lipschitz-continuity of $\Gamma$ implies
\begin{eqnarray*}
\delta v(u)&\leq&\int\bigl\| \Gamma\bigl(\Phi_{s}^{u}(2),
\omega_{2}, \bar \tta, \bar \omega\bigr) - \Gamma\bigl(
\Phi_{s}^{u}(1), \omega_{1}, \bar \tta, \bar
\omega\bigr) \bigr\|\Psi(x, \bar x) \nu_{u}^{\bar\omega, \bar x}(\mathrm{d}\bar
\tta) \mu(\mathrm {d}\bar \omega) \,\mathrm{d}\bar x
\\
&\leq&\| \Gamma \|_{\mathrm{Lip}}S(\Psi) \bigl(\bigl\| \Phi_{s}^{u}(2)-
\Phi_{s}^{u}(1) \bigr\| + \| \omega_{2}-
\omega_{1} \| \bigr),
\end{eqnarray*}
where $S(\Psi)$ has already been defined in \eqref{eq:SPsi}.
Putting things together we see that, for $C= 2L + 3 \| \Gamma \|
_{\mathrm{Lip}}S(\Psi)$,
%
%
\begin{eqnarray}
\bigl\| \Phi_{s}^{t}(2)-\Phi_{s}^{t}(1)
\bigr\|^{2}&\leq&\| \tta_{2}-\tta _{1} \|^{2}
\nonumber
\\[-8pt]
\\[-8pt]
\nonumber
&&{}+ C \int_{s}^{t} \bigl(\bigl\| \Phi_{s}^{u}(2)-
\Phi_{s}^{u}(1)\bigr \|^{2} + \| \omega
_{2}-\omega_{1} \|^{2} \bigr)\,\mathrm{d}u.
\end{eqnarray}
An application of Gronwall's lemma leads to
%
%
\begin{eqnarray}
&&\bigl\| \Phi_{s}^{t}(\tta_{2}, \omega_{2},
x)-\Phi_{s}^{t}(\tta_{1}, \omega_{1},
x) \bigr\|^{2} + \| \omega_{2}-\omega_{1}
\|^{2}
\nonumber
\\[-8pt]
\\[-8pt]
\nonumber
&&\qquad\leq e^{C(t-s)} \bigl(\| \tta_{2} -
\tta_{1} \|^{2} + \| \omega _{2}-
\omega_{1} \|^{2} \bigr).
\end{eqnarray}
Then, in the case where $\Psi$ satisfies hypothesis (H1),
we have $P_{t, T}f(\tta, \omega, x)= \chi_{R}(x-a) g(\Phi
_{t}^{T}(\tta; \omega
, x), \omega)$, when $f(\tta, \omega, x)= \chi_{R}(x-a) g(\tta,
\omega)$. But then,
\begin{eqnarray*}
&&\bigl\| g\bigl(\Phi_{t}^{T}(\tta_{2};
\omega_{2}, x), \omega_{2}\bigr) - g\bigl(\Phi
_{t}^{T}(\tta _{1}; \omega_{1}, x),
\omega_{1}\bigr) \bigr\|^{2}\\
&&\qquad \leq\| f \| _{R, a}^{2}
\bigl(\bigl\| \Phi _{t}^{T}(2) - \Phi_{t}^{T}(1)
\bigr\| + \bigl\| \omega _{2}-\omega_{1} \bigr\| \bigr)^{2}
\\
&&\qquad\leq2\| f \|_{R, a}^{2} \bigl(\bigl\| \Phi_{t}^{T}(2)
- \Phi_{t}^{T}(1) \bigr\|^{2} + \|
\omega_{2}-\omega_{1} \|^{2} \bigr)
\\
&&\qquad\leq2\| f \|_{R, a}^{2} e^{C(T-t)} \bigl(\|
\tta_{2} - \tta _{1} \| ^{2} + \|
\omega_{2}-\omega_{1} \|^{2} \bigr),
\end{eqnarray*}
so that
\begin{eqnarray*}
&&\bigl\| g\bigl(\Phi_{t}^{T}(\tta_{2};
\omega_{2}, x), \omega_{2}\bigr) - g\bigl(\Phi
_{t}^{T}(\tta _{1}; \omega_{1}, x),
\omega_{1}\bigr) \bigr\|\\
&&\qquad\leq\sqrt{2}\| f \|_{R, a}e^{
({C}/{2})(T-t)}
\bigl(\| \tta_{2} - \tta_{1} \| + \| \omega _{2}-
\omega _{1} \| \bigr),
\end{eqnarray*}
which is the desired estimate \eqref{eq:PnnLipschitz} and gives \eqref
{eq:propLipPnn}. The same kind of calculation in the case of
hypothesis (H2) leads to the estimate \eqref
{eq:fLipttaom} for $P_{t,T}f$.

Thus, it remains to prove estimates \eqref{eq:falphabounded} and
\eqref
{eq:falphaHolder} for $P_{t,T}f$ in the case of hypothesis (H2). The case of \eqref{eq:falphabounded} is
straightforward. As far as \eqref{eq:falphaHolder} is concerned, the
same kind of calculation with two different flows $\Phi
_{s}^{t}(x):=\Phi
_{s}^{t}(\tta; \omega, x)$ and $\Phi_{s}^{t}(y):=\Phi_{s}^{t}(\tta
; \omega,
y)$, with the same $\tta$ and $\omega$ but at different sites $x$
and $y$
leads to
\begin{eqnarray*}
&&\bigl\| \Phi_{s}^{t}(x)-\Phi_{s}^{t}(y)
\bigr\|^{2}\\
&&\qquad\leq 2L \int_{s}^{t}\bigl\| \Phi
_{s}^{u}(x)-\Phi_{s}^{u}(y)
\bigr\|^{2}\,\mathrm{d}u
\\
&&\qquad\quad{}+ 2 \int_{s}^{t}\bigl\| \Phi _{s}^{u}(x)-
\Phi_{s}^{u}(y) \bigr\| \underbrace{\bigl\| v\bigl(u,
\Phi_{s}^{u}(x), \omega, x\bigr) - v\bigl(u,
\Phi_{s}^{u}(y), \omega, y\bigr) \bigr\| }_{:=\delta v(u, x, y)}\,
\mathrm{d}u,
\end{eqnarray*}
with
\begin{eqnarray*}
&&\delta v(u, x, y)\\[-2pt ]
&&\qquad\leq\int\bigl\| \Gamma\bigl(\Phi_{s}^{u}(x),
\omega, \bar \tta, \bar\omega\bigr)\Psi(x, \bar x) \\[-2pt ]
&&\hspace*{47pt}{}- \Gamma\bigl(
\Phi_{s}^{u}(y), \omega, \bar\tta, \bar\omega \bigr)\Psi(y,
\bar x) \bigr\| \nu_{u}^{\bar\omega,
\bar x}(\mathrm{d}\bar\tta) \mu (\mathrm{d}
\bar\omega )\,\mathrm{d}\bar x
\\[-2pt ]
&&\qquad\leq\int\bigl\| \Gamma\bigl(\Phi_{s}^{u}(x), \omega, \bar\tta,
\bar \omega\bigr) - \Gamma \bigl(\Phi_{s}^{u}(y), \omega,
\bar\tta, \bar\omega \bigr) \bigr\|\Psi(x, \bar x) \nu_{u}^{\bar
\omega, \bar x}(
\mathrm{d}\bar\tta) \mu(\mathrm{d}\bar\omega )\,\mathrm{d}\bar x
\\[-2pt ]
&&\qquad\quad{}+\int\bigl\| \Gamma\bigl(\Phi_{s}^{u}(y), \omega, \bar\tta,
\bar\omega \bigr) \bigr\|\bigl\vert\Psi(x, \bar x) - \Psi(y, \bar x)\bigr\vert
\nu_{u}^{\bar\omega, \bar x}(\mathrm{d} \bar\tta) \mu(\mathrm{d}\bar\omega)\,
\mathrm{d}\bar x
\\[-2pt ]
&&\qquad\leq\| \Gamma \|_{\mathrm{Lip}} S(\Psi) \bigl(\bigl\| \Phi_{s}^{u}(x)-
\Phi_{s}^{u}(y) \bigr\| \bigr)
\\[-2pt ]
&&\qquad\quad{}+\| \Gamma \|_{\infty} \int_{[-1, 1]^{d}} \bigl\vert\Psi(x, \bar
x) - \Psi (y, \bar x)\bigr\vert\underbrace{\int_{\cX\times\cE}
\nu_{u}^{\bar
\omega,
\bar x}(\mathrm{d}\bar\tta) \mu(\mathrm{d}\bar\omega
)}_{=1}\,\mathrm{d}\bar x
\\[-2pt ]
&&\qquad\leq\| \Gamma \|_{\mathrm{Lip}}S(\Psi)\bigl\| \Phi_{s}^{u}(x)-
\Phi _{s}^{u}(y) \bigr\|+\cI _{2}(\Psi)\| \Gamma
\|_{\infty}\| x- y \|^{(d-\alpha)\wedge1},
\end{eqnarray*}
where $S(\Psi)$ is defined in \eqref{eq:SPsi} and where we used
assumption \eqref{eq:intpsi}. This gives, for $C=2L + 2\| \Gamma \|
_{\mathrm{Lip}}S(\Psi) + \cI_{2}(\Psi) \| \Gamma \|_{\infty}$,
\begin{eqnarray*}
&&\bigl\| \Phi_{s}^{t}(x)-\Phi_{s}^{t}(y)
\bigr\|^{2}\\
&&\qquad\leq C\int_{s}^{t}\bigl\| \Phi
_{s}^{u}(x)-\Phi_{s}^{u}(y)
\bigr\|^{2}\,\mathrm{d}u +\cI_{2}(\Psi)\| \Gamma
\|_{\infty} (t-s) \| x- y \|^{2((d-\alpha)\wedge1)}.
\end{eqnarray*}
Consequently, by Gronwall's lemma,
%
%
\begin{eqnarray}
\label{eq:gronwallPhixy}&& \bigl\| \Phi_{s}^{t}(\tta; \omega, x)-
\Phi_{s}^{t}(\tta; \omega, y)\bigr \| ^{2}
\nonumber
\\[-8pt]
\\[-8pt]
\nonumber
&&\qquad\leq\cI
_{2}(\Psi)\| \Gamma \|_{\infty} (t-s)e^{C(t-s)}\| x-y \|
^{2((d-\alpha)\wedge1)}.
\end{eqnarray}
Then, for any $0<t\leq T$, we have
%
%
\begin{eqnarray}
\| \delta P_{t,T}f \|^{2}&:=&\bigl\| \| x-a \|^{2\gamma}
P_{t,T}f(\tta, \omega, x) - \| y-a \|^{2\gamma}
P_{t,T}f(\tta, \omega, y) \bigr\| ^{2}
\nonumber
\\
&= &\bigl\| \| x-a \|^{2\gamma} f\bigl(\Phi_{t}^{T}(\tta;
\omega, x), \omega, x\bigr) \nonumber\\
&&\hspace*{4pt}{}- \| y-a \|^{2\gamma } f\bigl(
\Phi_{t}^{T}(\tta; \omega, y), \omega, y\bigr)
\bigr\|^{2}
\nonumber
\\
&\leq& \bigl(\| x-a \|^{2\gamma} \bigl\| f\bigl(\Phi_{t}^{T}(
\tta; \omega, x), \omega, x\bigr) - f\bigl(\Phi_{t}^{T}(
\tta; \omega, y), \omega, x\bigr) \bigr\|
\nonumber
\\
&&{}+\bigl\| \| x-a \|^{2\gamma } f\bigl(\Phi_{t}^{T}(\tta;
\omega, y), \omega, x\bigr)\nonumber\\
\label{eq:ineqfalpha}&&\hspace*{104pt}{} - \| y-a \|^{2\gamma } f\bigl(\Phi
_{t}^{T}(\tta; \omega, y), \omega, y\bigr) \bigr\|
\bigr)^{2}
\nonumber
\\
&\leq&\| f \|_{a}^{2} \bigl(\bigl\| \Phi_{t}^{T}(x)
- \Phi_{t}^{T}(y) \bigr\| + \| x-y \|^{(2\gamma-\alpha)\wedge1}
\bigr)^{2}
\\
\label{eq:ineqfalpha2}
&\leq&2\| f \|_{a}^{2} \bigl(\bigl\| \Phi_{t}^{T}(x)
- \Phi _{t}^{T}(y) \bigr\| ^{2} + \| x-y
\|^{2((2\gamma-\alpha)\wedge1)} \bigr)
\nonumber\\
&\leq&2\| f \|_{a}^{2} \bigl(\cI_{2}(\Psi)\|
\Gamma \|_{\infty} (T-t)\vee 1 \bigr)e^{{C(T-t)}}
\nonumber
\\[-8pt]
\\[-8pt]
\nonumber
&&{}\times \bigl( \| x-y
\|^{2((d-\alpha)\wedge1)} + \| x-y \| ^{2((2\gamma-\alpha)\wedge1)} \bigr),\nonumber
\end{eqnarray}
where we used assumptions \eqref{eq:falphabounded} and \eqref
{eq:falphaHolder} in \eqref{eq:ineqfalpha} and estimation \eqref
{eq:gronwallPhixy} in~\eqref{eq:ineqfalpha2}. Using the definition of
$\gamma$ [recall \eqref{eq:assgamma}], it is always true that
$d-\alpha
\geq2\gamma-\alpha$. Consequently,
\begin{eqnarray*}
&&\bigl\| \| x-a \|^{2\gamma} P_{t,T}f(\tta, \omega, x) - \| y-a \|
^{2\gamma} P_{t,T}f(\tta, \omega, y) \bigr\|
\\
&&\qquad\leq2 \bigl(T\cI _{2}(\Psi)\| \Gamma \|_{\infty}\vee 1
\bigr)^{ {1}/{2}}e^{{ ({C}/{2})(T-t)}}\| f \|_{a} \| x-y \|
^{(2\gamma-\alpha)\wedge1},
\end{eqnarray*}
which leads to \eqref{eq:falphaHolder}. Lemma~\ref{lem:propagatorLip}
is proved.
\end{pf}

%
\begin{remark}
One could wonder why we have not simply used in the calculation above
the global Lipschitz assumption about $c$ (recall Assumption~\ref
{ass:cGlobal}), instead of the more involved one-sided Lipschitz
inequality used here. The crucial reason for this is that in order to
be able to discard Assumption~\ref{ass:cGlobal} in Section~\ref
{sec:loclip} below, we need to ensure that the estimates of Lemma~\ref
{lem:propagatorLip} do not depend on the modulus of continuity of $c$,
but only on its one-sided Lipschitz constant $L$.
\end{remark}
Using \eqref{eq:propLipPnn} [resp., \eqref{eq:propLipPLinfd}] in
\eqref
{eq:varf}, we easily see that for every $a\in [- \frac{1}{2},
\frac
{1}{2} ]^{d}$, for any given $f\in\cC_{R, a}$ with $\| f \|_{R,
a}\leq
1$ (resp., $f\in\cC_{a}$ with $\| f \|_{a}\leq1$), we have
%
%
\begin{eqnarray}
\label{eq:estimfnut}&& \bigl\| \bigl\langle f, \nu_{T}^{(N)}\bigr\rangle
- \langle f, \nu_{T}\rangle\bigr \| \nonumber\\
&&\qquad\leq\bigl\| \bigl\langle P_{0, T}f
, \nu _{0}^{(N)}\bigr\rangle - \langle P_{0, T}f,
\nu_{0}\rangle \bigr\|\nonumber
\nonumber
\\[-8pt]
\\[-8pt]
\nonumber
&&\qquad\quad{}+ \biggl\| \frac{1}{ \llvert \gL _{N} \rrvert } \sum_{k}\int
_{0}^{T} \nabla_{\tta}
(P_{t, T}f ) \bigl(\tta_{k}(t), \omega_{k},
x_{k} \bigr) \cdot\bigl(\sig\, \mathrm {d} B_{k}(t)\bigr)
\biggr\|
\\
&&\qquad\quad{}+ \frac{1}{\llvert \gL_{N} \rrvert } \sum_{k}\int
_{0}^{T} \| \nabla_{\tta}P_{t, T}f
\| \bigl\| \bigl\langle\Gamma (\tta _{k}, \omega _{k}, \cdot,
\cdot )\Psi (x_{k}, \cdot ), \nu _{t}^{(N)} -
\nu_{t}\bigr\rangle \bigr\|\,\mathrm{d}t.\nonumber
\end{eqnarray}
Using \eqref{eq:nablafR} and \eqref{eq:propLipPnn} [resp., \eqref
{eq:nablafalpha} and \eqref{eq:propLipPLinfd}], the term $\| \nabla
_{\tta}P_{t, T}f \| (\tta_{k}(t),\break  \omega_{k}, x_{k} )$ in
the third
summand of \eqref{eq:estimfnut} can be bounded by $\sqrt{2}e^{|\!|\!|P
|\!|\!|(T-t)}\| \chi_{R} \|_{\infty}$ in case of hypothesis (H1)
and by $\| x_{k} - a \|^{-\alpha}|\!|\!|P |\!|\!|e^{|\!|\!|P |\!|\!|
(T-t)}$ in case of hypothesis (H2). In both cases, the
bound that we find can be written in the form\looseness=-1
%
%
\begin{equation}
\label{eq:rho} \| \nabla_{\tta}P_{t, T}f \| \bigl(
\tta_{k}(t), \omega_{k}, x_{k} \bigr)\leq
e^{|\!|\!|P |\!|\!|(T-t)} \rho(x_{k})
\end{equation}\looseness=0
($\rho$ is a constant in the first case and proportional to $\| x_{k}
- a \|^{-\alpha}$ in the second). In particular, it is uniform in $f$ and
$(\tta_{k}, \omega_{k})$. Let us now fix the integer $p$ equal to
$2$ in
the case of hypothesis (H1) or defined as in \eqref{eq:p}
in the case of hypothesis (H2). Elevating inequality
\eqref{eq:estimfnut} to the power $p$ and taking the expectation lead to
%
%
\begin{eqnarray}
\label{eq:normpK} &&\hspace*{-6pt}\frac{1}{3^{p-1}} \bE\bigl\| \bigl\langle f, \nu_{T}^{(N)}-
\nu_{T}\bigr\rangle \bigr\|^{p}\nonumber \\
&&\hspace*{-10pt}\qquad\leq \bE\bigl\| \bigl\langle
P_{0, T}f, \nu_{0}^{(N)}-\nu_{0}\bigr
\rangle \bigr\|^{p}
\nonumber
\\[-4pt]
\\[-10pt]
\nonumber
&&\hspace*{-10pt}\qquad\quad{}+ \bE\biggl\| \frac {1}{ \llvert \gL_{N} \rrvert } \sum_{k}\int
_{0}^{T} \nabla _{\tta}
(P_{t, T}f ) \bigl(\tta_{k}(t), \omega_{k},
x_{k} \bigr) \cdot \bigl(\sig\,\mathrm{d}B_{k}(t)\bigr)
\biggr\|^{p}
\\
&&\hspace*{-10pt}\qquad\quad{}+ \bE\biggl\llvert \frac{1}{ \llvert
\gL
_{N} \rrvert } \sum_{k}
\int_{0}^{T} e^{|\!|\!|P |\!|\!|(T-t)} \rho
(x_{k}) \bigl\| \bigl\langle\Gamma (\tta_{k},
\omega_{k}, \cdot, \cdot )\Psi (x_{k}, \cdot ),
\nu_{t}^{(N)} - \nu _{t}\bigr\rangle \bigr\|\,
\mathrm{d}t \biggr\rrvert ^{p}.
\nonumber
\end{eqnarray}
Let us concentrate on the third term of the last inequality that we
denote by $D_{N}$. By successive use of H\"older's inequality (recall
that $ \frac{1}{p}+ \frac{1}{q}=1$), one has
%
%
\begin{eqnarray}
\label{aux:DN} D_{N}&\leq& \biggl(\int_{0}^{T}
e^{q |\!|\!|P |\!|\!|(T-t)}\,\mathrm{d} t \biggr)^{ {p}/{q}}\nonumber\\
&&{}\times \bE\int
_{0}^{T} \biggl\llvert \frac{1}{ \llvert
\gL
_{N} \rrvert } \sum
_{k} \rho(x_{k})\bigl \| \bigl\langle\Gamma
(\tta_{k}, \omega_{k}, \cdot, \cdot )\Psi
(x_{k}, \cdot ), \nu _{t}^{(N)} -
\nu_{t}\bigr\rangle \bigr\|\biggr\rrvert ^{p}\,\mathrm{d}t
\nonumber
\\[-8pt]
\\[-8pt]
\nonumber
&\leq& \biggl(\frac{e^{q|\!|\!|P |\!|\!|T}-1}{q |\!|\!|P |\!|\!|
} \biggr)^{ {p}/{q}} \biggl(\frac{1}{ \llvert \gL_{N} \rrvert }
\sum_{k} \rho(x_{k})^{q}
\biggr)^{ {p}/{q}}\\
&&{}\times \int_{0}^{T}
\frac{1}{
\llvert \gL_{N} \rrvert } \sum_{k} \bE\bigl\| \bigl\langle
\Gamma (\tta_{k}, \omega_{k}, \cdot, \cdot )\Psi
(x_{k}, \cdot ), \nu _{t}^{(N)} -
\nu_{t}\bigr\rangle \bigr\|^{p}\,\mathrm{d}t.\nonumber
\end{eqnarray}

At this point, here are the main steps of proof that we will follow in
the remainder of this paper: we have built the spaces of test functions
(recall Definitions \ref{def:spacePnearest} and \ref
{def:spacepowerinfd}) in such a way that they precisely include the
functions $(\tta, \omega, x)\mapsto\Gamma (\tta_{k}, \omega
_{k}, \tta,
\omega )\Psi (x_{k}, x  )$ for all $k$ (in this
case, $a$
is equal to $x_{k}$). Since the distances between two random measures
introduced in Definitions \ref{def:distancePnn} and~\ref
{def:distancePLinfd} are exactly the suprema of evaluations over all
such test functions, we are thus able to bound the term within the
integral in \eqref{aux:DN} in terms of the distance between $\nu^{(N)}$
and $\nu$.

The second point of the proof is to obtain an estimate (uniform in $f$)
of the speed of convergence to $0$ of the two first terms in \eqref
{eq:normpK}. Taking the supremum over all test functions $f$ and
applying Gronwall's lemma lead to the conclusion.

Those steps are somehow easy to follow in the $P$-nearest-neighbor case
(see Section~\ref{sec:Pnn}) but are more technically demanding in the
power-law case; see Section~\ref{sec:PLinfd}.

\section{Law of large numbers in the $P$-nearest-neighbor case}
\label{sec:Pnn}
The purpose of this section is to prove Theorem~\ref{theo:LLNPnn}. Thus
throughout this section, we suppose that $\Psi$ satisfies
hypothesis (H1) for some $R\in(0, 1]$. In this case, the
integer $p$ introduced in \eqref{eq:normpK} is equal to $2$, and the
function $\rho$ in \eqref{eq:rho} is bounded (equal to $\sqrt {2}\| \chi_{R} \|_{\infty}$). In particular, the two terms in front
of the integral in
\eqref{aux:DN} are trivially bounded by a constant, equal to $\frac
{e^{2|\!|\!|P |\!|\!|T}-1}{2|\!|\!|P |\!|\!|} \| \chi_{R} \|_{\infty}^{2}$.

The following proposition proves the convergence to $0$ of the first
term in \eqref{eq:normpK} together with explicit rates:
%
%
\begin{proposition}[(Convergence of the initial condition)]
\label{prop:initialcondPnn}
There exists a numerical constant $C_{1}>0$ (independent of $R$) such
that for all $f\in\break  \bigcup_{a\in [- {1}/{2}, {1}/{2}
]^{d}}\cC_{R, a}$ with $\| f \|_{R, a}\leq1$ and $\| f \|_{\infty
}\leq1$,
%
%
\begin{equation}
\label{eq:initialcondPnn} \bE\bigl\| \bigl\langle P_{0, T}f, \nu_{0}^{(N)}
\bigr\rangle - \langle P_{0, T}f, \nu _{0}\rangle
\bigr\|^{2}\leq\frac{C_{1}}{N^{d\wedge2}}.
\end{equation}
\end{proposition}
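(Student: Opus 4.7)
The plan is to exploit the specific product structure that any function in $\cC_{R,a}$ has by definition, and that this structure is preserved by the propagator $P_{0,T}$ thanks to Lemma~\ref{lem:propagatorLip}(1). Namely, since $f \in \cC_{R,a}$ is of the form $(\tta,\om,x)\mapsto g(\tta,\om)\chi_{R}(x-a)$, the function $h:= P_{0,T}f$ is still in $\cC_{R,a}$, hence may be written $h(\tta,\om,x)= \tilde g(\tta,\om)\chi_{R}(x-a)$ with $\tilde g$ $(\sqrt{2}e^{\vvvert P\vvvert T})$-Lipschitz and, since $\Ninf{f}\leq1$ and $P_{0,T}$ is an $L^{\infty}$-contraction, bounded by $(2R)^{d}$ on the support of $\chi_{R}$.

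The idea is then a classical split into ``noise'' and ``Riemann-sum'' contributions. Let $m_{\tg}:=\int \tg(\tta,\om)\,\zeta(\dd\tta)\mu(\dd\om)$. Since $\int \chi_{R}(x-a)\dd x = 1$, one writes
\begin{multline*}
\cro{P_{0,T}f}{\nu_{0}^{(N)}-\nu_{0}} = \underbrace{\frac{1}{|\gL_{N}|}\sum_{j\in\gL_{N}}\bigl[\tg(\tta_{j}(0),\om_{j})-m_{\tg}\bigr]\chi_{R}(x_{j}-a)}_{I_{1}}\\
+ \, m_{\tg}\underbrace{\left[\frac{1}{|\gL_{N}|}\sum_{j\in\gL_{N}}\chi_{R}(x_{j}-a) - \int \chi_{R}(x-a)\dd x\right]}_{I_{2}}.
\end{multline*}

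For $I_{1}$, the summands $Y_{j}:=[\tg(\tta_{j}(0),\om_{j})-m_{\tg}]\chi_{R}(x_{j}-a)$ are centered and independent (the $(\tta_{j}(0),\om_{j})$ are \iid and $x_{j}$ is deterministic). Using the uniform bound $|\tg|\leq(2R)^{d}$ on $\Supp(\chi_{R}(\cdot-a))$, one has $|Y_{j}|\leq 2\,\ind{x_{j}\in a+[-R,R]^{d}}$, so that
\begin{equation*}
\bE|I_{1}|^{2} = \frac{1}{|\gL_{N}|^{2}}\sum_{j}\bE|Y_{j}|^{2} \leq \frac{4}{|\gL_{N}|^{2}}\#\{j\in\gL_{N}:x_{j}\in a+[-R,R]^{d}\}\leq \frac{C}{N^{d}},
\end{equation*}
by a volume comparison.

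The term $I_{2}$ is purely deterministic: it is the error of a Riemann sum for the (bounded) indicator $\chi_{R}(\cdot-a)$ on the regular grid $(x_{j})_{j\in\gL_{N}}$ of mesh $1/(2N)$. The standard estimate for Riemann sums of characteristic functions of boxes counts the lattice points lying in a $O(1/N)$-neighborhood of the boundary $\partial(a+[-R,R]^{d})$: there are $O(N^{d-1})$ such points, each contributing $(2R)^{-d}/|\gL_{N}|$ to the error, giving $|I_{2}|\leq C/N$ and hence $|m_{\tg}|^{2}|I_{2}|^{2}\leq C/N^{2}$.

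Combining the two estimates yields the announced rate $C_{1}/N^{d\wedge 2}$, with a constant depending on $R$, $T$, $\Gamma$ and $c$ through $\vvvert P\vvvert$ but uniform in $a$ and in $f$ with $\N{f}_{R,a}\leq 1$, $\Ninf{f}\leq1$. The only subtle point is the lack of smoothness of $\chi_{R}$, which is precisely the reason why the Riemann sum error is only $O(1/N)$ rather than $O(1/N^{2})$, explaining the $d\wedge 2$ exponent.
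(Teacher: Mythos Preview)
Your argument follows the same decomposition as the paper: a variance term $I_1$ from the i.i.d.\ fluctuations of $(\tta_j(0),\om_j)$ and a deterministic Riemann-sum error $m_{\tg}I_2$ for the indicator $\chi_R(\cdot-a)$. The paper estimates the Riemann error by factorizing $\chi_R$ as a product of one-dimensional indicators and computing each explicitly; your boundary-counting argument is a valid alternative yielding the same $O(1/N)$.

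Two remarks. First, the claim $\int\chi_R(x-a)\,\dd x=1$ fails when $a$ lies within distance $R$ of $\partial[-\tfrac12,\tfrac12]^d$; fortunately the identity $\cro{P_{0,T}f}{\nu_0^{(N)}-\nu_0}=I_1+m_{\tg}I_2$ holds regardless (just expand and regroup), so this is harmless. Second, the statement requires $C_1$ to be \emph{independent of $R$}, which your closing sentence contradicts. In fact the $R$-dependence cancels: for $I_1$, the number of lattice points in $a+[-R,R]^d$ is $O((RN)^d)$, whence $\bE|I_1|^2=O(R^d/N^d)\leq O(1/N^d)$; for $I_2$, there are $O((RN)^{d-1})$ boundary cells, each contributing at most $(2R)^{-d}/|\gL_N|$, so $|I_2|=O(R^{-1}/N)$ and therefore $|m_{\tg}I_2|\leq (2R)^d|I_2|=O(R^{d-1}/N)\leq O(1/N)$. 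Note also that the Lipschitz bound on $\tg$ is never used---only $\Ninf{\tg}\leq(2R)^d$ coming from $\Ninf{f}\leq 1$---so $\vvvert P\vvvert$ does not enter the constant.
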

\begin{pf}
Recall that the couples $(\tta_{i}(0), \omega_{i})_{1\leq i\leq N}$ are
supposed to be i.i.d. samples of the law $\zeta(\mathrm{d}\tta
)\otimes\mu
(\mathrm{d}\omega
)$ on $\cX\times\cE$. Let $f\in\cC_{R, a}$: by definition, $f(\tta
, \omega
, x)= g(\tta, \omega) \chi_{R}(x-a)$ so that $P_{0,T}f=\chi(x-a) P_{0,
T}g$. We write $\vphi:=P_{0,T}g$ for simplicity. Then
%
%
\begin{eqnarray}
\label{aux:Pnn} \delta_{N}(f)&:=& \bE\bigl\| \bigl\langle P_{0, T}f
, \nu_{0}^{(N)}\bigr\rangle - \langle P_{0, T}f,
\nu_{0}\rangle \bigr\|^{2}
\nonumber
\\[-2pt ]
&=&\bE\biggl\| \frac
{1}{\llvert \Lambda_{N} \rrvert }\sum_{j} \vphi(
\tta_{j}, \omega_{j})\chi _{R}(x_{j}-a)\nonumber
\\[-2pt ]
&&\hspace*{54pt}{}- \int \vphi(\tta, \omega)\chi _{R}(x-a) \zeta(\mathrm{d}\tta)\mu(
\mathrm{d}\omega )\,\mathrm{d} x\biggr \|^{2}
\nonumber
\\[-2pt ]
&\leq&2\bE\biggl\| \chi_{R}(x_{j}-a)\frac{1}{\llvert \Lambda_{N}
\rrvert }\sum
_{j} \biggl( \vphi(\tta_{j},
\omega_{j}) - \int \vphi (\tta, \omega )\zeta(\mathrm{d}\tta)\mu(
\mathrm{d}\omega ) \biggr) \biggr\|^{2}
\nonumber
\\[-2pt ]
&&{}+2\biggl\| \int\vphi(\tta, \omega)\zeta(\mathrm{d}\tta)\mu(\mathrm {d}\omega )
\biggl(\frac{1}{\llvert \Lambda_{N} \rrvert
}\sum_{j}
\chi_{R}(x_{j}-a) \nonumber\\[-2pt ]
\nonumber\\[-2pt ]
&&\hspace*{170pt}{}- \int \chi _{R}(x-a)\,
\mathrm{d}x \biggr) \biggr\|^{2}
\\[-2pt ]
&\leq&\frac{2}{(2R)^{2d}}\bE\biggl\| \frac{1}{\llvert \Lambda_{N}
\rrvert }\sum
_{j} \biggl(\vphi(\tta_{j},
\omega_{j}) - \int \vphi (\tta, \omega )\zeta(\mathrm{d}\tta)\mu(
\mathrm{d}\omega ) \biggr) \biggr\|^{2}
\nonumber
\\[-2pt ]
&&{}+2\| \vphi \|_{\infty}^{2}\biggl\llvert \frac{1}{\llvert \Lambda_{N}
\rrvert }
\sum_{j} \chi_{R}(x_{j}-a)
- \int\chi_{R}(x-a)\,\mathrm {d}x\biggr\rrvert ^{2}
\nonumber
\\[-2pt ]
&:= &A_{N} +B_{N}.\nonumber
\end{eqnarray}
Since the $(\tta_{i}, \omega_{i})$ are i.i.d. random variables (with law
$\zeta\otimes\mu$), a standard calculation shows
\[
A_{N}= \frac{2}{\llvert \Lambda_{N} \rrvert ^{2}(2R)^{2d}} \sum_{j}
\bE\biggl\| \vphi(\tta_{j}, \omega_{j}) - \int\vphi(\tta, \omega
)\zeta(\mathrm{d} \tta)\mu(\mathrm{d}\omega) \biggr\|^{2}\leq
\frac
{8\| f \|_{\infty}^{2}}{2^{d} N^{d}},
\]
since $\| \vphi \|_{\infty}=\| P_{0, T}g \|_{\infty}=(2R)^{d}\| f \|
_{\infty}$ and $ \llvert
\gL_{N} \rrvert = (2N+1)^{d}\geq(2N)^{d}$.

Let us now turn to the case of the term $B_{N}$ in \eqref{aux:Pnn}. We
place ourselves in the case of nonperiodic boundary condition; recall
Remark~\ref{rem:boundcond}. The periodic case is simpler and left to
the reader. Let $a=  (a_{1}, \ldots, a_{d} )$. One has
%
%
\begin{equation}
\label{eq:prodIa}\qquad \int_{  [-{1}/{2}, {1}/{2} ]^{d}} \chi _{R}(x-a)\,
\mathrm{d}x = \prod_{l=1}^{d} \biggl(
\frac{1}{2R} \int_{- {1}/{2}}^{ {1}/{2}}
\mathbh{1}_{\llvert  x-a_{l} \rrvert \leq R}\,\mathrm{d}x \biggr):= \prod
_{l=1}^{d} \cI(a_{l}).
\end{equation}
In the same way,
\[
\frac{1}{\llvert \Lambda_{N} \rrvert }\sum_{j} \chi_{R}(x_{j}-a)
= \prod_{l=1}^{d} \Biggl(
\frac{1}{2R(2N+1)} \sum_{j=-N}^{N} \mathbh
{1}_{\llvert  x_{j} - a_{l} \rrvert \leq R } \Biggr):= \prod_{l=1}^{d}
\cI_{N}(a_{l}).
\]
Then, from the obvious equality,
\begin{eqnarray*}
&&\prod_{l=1}^{d} \cI_{N}(a_{l})
- \prod_{l=1}^{d} \cI(a_{l}) \\
&&\qquad=
\sum_{k=1}^{d} \cI_{N}(a_{1})
\cdots\cI_{N}(a_{k-1}) \bigl(\cI _{N}(a_{k})
- \cI(a_{k}) \bigr) \cI(a_{k+1})\cdots\cI(a_{d})
\end{eqnarray*}
and a recursion argument, one only needs to consider the case $d=1$ in
order to prove \eqref{eq:initialcondPnn}.
An easy calculation shows the following: for all $a\in [-\frac
{1}{2}, \frac{1}{2} ]$, for all $R\in (0, 1 ]$,
\begin{eqnarray*}
\cI(a)&=& \frac{1}{2R}\int_{- {1}/{2}}^{ {1}/{2}} \mathbh
{1}_{\llvert  x-a \rrvert \leq R}\,\mathrm{d}x\\
& =& %
\cases{\displaystyle \frac{1}{2R}
\biggl(R+ \frac{1}{2} +a \biggr), &\quad $\mbox{if } \displaystyle-\! \frac
{1}{2} \leq a
\leq- \frac{1}{2} +R,$\vspace*{2pt}
\cr
1, &\quad $\mbox{if } \displaystyle- \!\frac{1}{2}
+R \leq a \leq\frac{1}{2} - R,$\vspace*{2pt}
\cr
\displaystyle\frac{1}{2R}
\biggl(R+ \frac{1}{2} -a \biggr), &\quad $\mbox{if } \displaystyle\frac{1}{2} -R \leq a
\leq\frac{1}{2}.$} %
\end{eqnarray*}
Thus, in the one-dimensional case, we need to distinguish three cases,
depending on the position of $a\in [-\frac{1}{2}, \frac
{1}{2}
]$ w.r.t. $R$; we only treat the case $- \frac{1}{2} \leq a \leq-
\frac
{1}{2} +R$, the two\vadjust{\goodbreak} others being similar and left to the reader. In
this case, one has successively
\begin{eqnarray*}
\bigl\llvert \cI_{N}(a) - \cI(a) \bigr\rrvert ^{2} &=&
\frac{1}{4R^{2}} \Biggl\llvert \frac{1}{2N+1} \sum
_{j=-N}^{N} \mathbh{1}_{\llvert  j - 2aN
\rrvert \leq2RN } - \biggl(R+
\frac{1}{2} +a \biggr) \Biggr\rrvert ^{2}
\\
&=& \frac{1}{4R^{2}} \biggl\llvert \frac{1}{2N+1} \bigl(\bigl\lfloor2N(R+a)
\bigr\rfloor+N \bigr)- \biggl(R+ \frac{1}{2} +a \biggr) \biggr\rrvert
^{2}
\\
&\leq&\frac{(R+a)^{2}}{4R^{2}(1+2N)^{2}} \leq\frac
{(2R-1/2)^{2}}{16R^{2}N^{2}}\leq\frac{1}{4N^{2}}.
\end{eqnarray*}

Proposition~\ref{prop:initialcondPnn} is proved.
\end{pf}
We are now in position to prove Theorem~\ref{theo:LLNPnn}:
\begin{pf*}{Proof of Theorem~\ref{theo:LLNPnn}}
\label{proof:theoPnn}
Fix some $a\in [- \frac{1}{2}, \frac{1}{2} ]^{d}$ and some
$f\in
\cC_{R,a}$ such that $\| f \|_{R,a}\leq1$ and $\| f \|_{\infty}\leq
1$. Let us
first give an estimate of the second term in \eqref{eq:estimfnut}.
Recall that $B_{k}$ is a Brownian motion in $\cX=\bR^{m}$ so that
$B_{k}$ may be written as $m$ i.i.d. Brownian motions $
(B_{k}^{(1)}, \ldots, B_{k}^{(m)} )$. Then, using \eqref
{eq:nablafR} (recall Remark~\ref{rem:nablafR}) in \eqref{aux:iniPnn1}
and using \eqref{eq:propLipPnn} (recall Lemma~\ref{lem:propagatorLip})
in \eqref{aux:iniPnn2}
%
%
\begin{eqnarray}
&&\bE\biggl\| \frac{1}{ \llvert \gL_{N} \rrvert } \sum_{k}\int
_{0}^{T} \nabla_{\tta}
(P_{t, T}f ) \bigl(\tta_{k}(t), \omega_{k},
x_{k} \bigr) \cdot\mathrm{d}B_{k}(t)\biggr \|^{2}\nonumber\\
&&\qquad=
\frac{1}{
\llvert \gL_{N}
\rrvert ^{2}} \sum_{k} \sum
_{l=1}^{m}\bE\int_{0}^{T}
\partial _{\tta
^{(l)}} (P_{t, T}f )^{2} \,\mathrm{d}t
\nonumber
\\
&&\qquad\leq\frac{m\| \chi_{R} \|_{\infty}^{2}}{ \llvert \gL_{N}
\rrvert
} \int_{0}^{T} \|
P_{t,T}f \|_{R,a}^{2}\,\mathrm{d}t\label{aux:iniPnn1}
\\
&&\qquad\leq\frac{m\| \chi_{R} \|_{\infty}^{2}}{ \llvert \gL_{N}
\rrvert
} 2\int_{0}^{T}
e^{2|\!|\!|P |\!|\!|(T-t)}\,\mathrm{d}t\label{aux:iniPnn2}
\\
&&\qquad=\frac{m (e^{2|\!|\!|P |\!|\!|T}-1)}{(2R)^{2d} \llvert \gL_{N}
\rrvert }\leq\frac{C_{2}}{N^{d}}\label{eq:martpartPnn},
\end{eqnarray}
where $C_{2}=\frac{m (e^{2|\!|\!|P |\!|\!|T}-1)}{8^{d}R^{2d}}$ and
where $|\!|\!|P |\!|\!|$ is defined by \eqref{eq:propLipPnn}.

Let us now give an estimate of the term $D_{N}$ in \eqref{aux:DN}: by
Definition~\ref{def:spacePnearest}, due to the assumptions made on
$\Gamma$, it is easy to see that for fixed $k$ the function
$f_{k}:=\Gamma (\tta_{k}, \omega_{k}, \cdot, \cdot
)\Psi
(x_{k}, \cdot )$ belongs to $\cC_{R, x_{k}}$ with norm $\|
f_{k} \|_{R, x_{k}}= \| \Gamma \|_{\mathrm{Lip}}$. Consequently, by
construction of
the distance $d_{R}$ (recall Definition~\ref{def:distancePnn}), one has
the following:
\[
\forall t>0\qquad \bE\bigl\| \bigl\langle\Gamma (\tta_{k},
\omega_{k}, \cdot, \cdot )\Psi (x_{k}, \cdot ), \nu
_{t}^{(N)} - \nu _{t}\bigr\rangle
\bigr\|^{2}\leq\| \Gamma \|_{\mathrm
{Lip}}^{2}d_{R}
\bigl(\nu^{(N)}_{t}, \nu_{t}\bigr)^{2}.\vadjust{\goodbreak}
\]
Putting together \eqref{eq:normpK}, \eqref{eq:initialcondPnn} and
\eqref
{eq:martpartPnn}, we obtain finally
\[
\bE\bigl\| \bigl\langle f, \nu_{T}^{(N)}- \nu_{T}
\bigr\rangle \bigr\|^{2} \leq3\frac
{C_{1}}{N^{2\wedge d}} + 3\frac{C_{2}}{N^{d}}+ 3
\frac{e^{2|\!|\!|P
|\!|\!|T}-1}{(2R)^{2d}|\!|\!|P |\!|\!|}\| \Gamma \|_{\mathrm
{Lip}}^{2}\int
_{0}^{T} d_{R}\bigl(
\nu^{(N)}_{t}, \nu_{t}\bigr)^{2} \,
\mathrm{d}t.
\]
Taking the supremum over all functions $f$ in $\bigcup_{a\in
[-1,1]^{d}}\cC_{R, a}$ and applying Gronwall's lemma leads to the
result. Theorem~\ref{theo:LLNPnn} is proved.
\end{pf*}

\section{Law of large numbers in the power-law case}
\label{sec:PLinfd}
We suppose in this section that the weight $\Psi$ satisfies
hypothesis (H2).

Let us begin with a technical lemma that will be of constant use
throughout this part:
%
%
\begin{lemma}
\label{lem:convsumRiem}
There exists a constant $C_{0}>0$ (that only depends on $\beta$), such
that for all $N,K\geq1$, for all $a\in\cD_{K}$:
\begin{longlist}[(1)]
\item[(1)] for all $0<\beta<d$, one has
%
%
\begin{equation}
\label{eq:Riemleq1} \sum_{j; j/N\neq a} \biggl\| \frac{j}{2N} -a
\biggr\|^{-\beta} \leq C_{0} %
\cases{ N^{d}K^{d},
&\quad $\mbox{if } a\notin\cD_{N},$\vspace*{2pt}
\cr
N^{d}, &\quad $\mbox{if } a\in\cD_{N}$;} %
\end{equation}
\item[(2)] for $\beta=d$, one has
%
%
\begin{equation}
\label{eq:Riemeg1} \sum_{j; j/N\neq a}\biggl \| \frac{j}{2N} -a
\biggr\|^{-d} \leq C_{0} %
\cases{ K^{d}N^{d}
\ln N,& \quad$\mbox{if } a\notin\cD_{N},$\vspace*{2pt}
\cr
N^{d}\ln N,
& \quad $\mbox{if } a\in\cD_{N}$;} %
\end{equation}
\item[(3)] for all $\beta>d$, one has
%
%
\begin{equation}
\label{eq:Riemgeq1} \sum_{j; j/N\neq a} \biggl\| \frac{j}{2N} -a
\biggr\|^{-\beta} \leq C_{0} %
\cases{ N^{\beta}K^{\beta},
& \quad $\mbox{if } a\notin\cD_{N},$\vspace*{2pt}
\cr
N^{\beta}, & \quad$\mbox{if } a\in\cD_{N}$.} %
\end{equation}
\end{longlist}
\end{lemma}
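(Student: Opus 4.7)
My approach would combine a lower bound on the minimum distance $r_{0}$ from $a$ to the shifted grid $\{x_{j}:=j/(2N):j\in\gL_{N}\}\setminus\{a\}$ with a dyadic decomposition of the sum into spherical shells around $a$. Writing $a=(p_{1}/(2K),\ldots,p_{d}/(2K))\in\cD_{K}$, the difference $|a_{\ell}-j_{\ell}/(2N)|=|Kj_{\ell}-Np_{\ell}|/(2NK)$ involves a nonzero integer in the numerator in at least one coordinate whenever $x_{j}\neq a$. When $a\in\cD_{N}$, each $Np_{\ell}/K$ is itself an integer and the $K$-factor disappears, yielding $r_{0}\geq 1/(2N)$; when $a\in\cD_{K}\setminus\cD_{N}$, we only obtain $r_{0}\geq 1/(2NK)$.

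Next I would set $r_{k}:=2^{k}r_{0}$ and $S_{k}:=\{j\in\gL_{N}:r_{k}\leq\N{x_{j}-a}<r_{k+1}\}$ for $k=0,\ldots,k_{\max}=O(\log(1/r_{0}))$, and use the standard lattice-packing bound $|S_{k}|\leq C_{d}(1+r_{k+1}N)^{d}$, which is $O(1)$ for $r_{k+1}\leq 1/N$ and $O((r_{k+1}N)^{d})$ otherwise. Using $\N{x_{j}-a}^{-\beta}\leq r_{k}^{-\beta}$ on each $S_{k}$ and splitting the range of $k$ at the index $k_{*}$ where $r_{k_{*}}\sim 1/N$, the total sum separates into a ``near'' part bounded by a geometric series in $r_{k}^{-\beta}$ of total mass $O(r_{0}^{-\beta})=O((NK)^{\beta})$, and a ``far'' part bounded by $C_{d}N^{d}\sum_{k>k_{*}}r_{k}^{d-\beta}$.

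The three announced bounds then correspond to the three regimes of this last geometric series. For $\beta<d$ it is dominated by the largest term $r_{k_{\max}}^{d-\beta}=O(1)$, yielding an $O(N^{d})$ contribution; for $\beta=d$ it becomes harmonic with $k_{\max}-k_{*}=O(\log(NK))$ terms of size one, yielding an extra logarithmic factor; for $\beta>d$ it is dominated by its first term $r_{k_{*}+1}^{d-\beta}\sim N^{\beta-d}$, yielding an $O(N^{\beta})$ contribution. Combining both parts and using the trivial inequalities $N^{\beta}K^{\beta}\leq N^{d}K^{d}$ when $\beta\leq d$ and $N^{d}\leq N^{\beta}K^{\beta}$ when $\beta\geq d$ produces exactly~\eqref{eq:Riemleq1}, \eqref{eq:Riemeg1} and \eqref{eq:Riemgeq1}; when $a\in\cD_{N}$ the near part is empty and the $K$-factors drop out of the final bounds.

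The main obstacle in this plan is really the Step~1 lower bound for $r_{0}$: one has to be careful not to use $1/(2NK)$ uniformly, since that would artificially introduce a spurious $K^{\beta}$ or $K^{d}$ factor in the bound when $a\in\cD_{N}$. Once this dichotomy is isolated, the shell-counting and the case analysis on $d-\beta$ are routine.
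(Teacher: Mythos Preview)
Your proof is correct and takes a genuinely different route from the paper's. The paper proceeds by induction on the dimension~$d$: in~$d=1$ it isolates the two nearest grid points to~$a$ (the ``critical'' ones), bounds their contribution by~$\rho(N,K)^{-\beta}$ with the same gcd-type minimum-distance estimate you use, and handles the rest by a direct comparison with integrals; in higher dimension it stratifies the index set~$\gL_{N}$ by the number~$p$ of critical coordinates of~$j$ and reduces each stratum to a~$(d-p)$-dimensional sum via the induction hypothesis. Your dyadic shell decomposition sidesteps this induction entirely and treats all dimensions at once via the lattice-packing bound~$|S_{k}|\leq C_{d}(1+2Nr_{k+1})^{d}$, splitting the shells at the scale~$r_{k_{*}}\sim 1/N$. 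This is the more standard analytic argument and is arguably cleaner; the paper's approach is more elementary but generates more bookkeeping.

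Two minor remarks. First, your count~$k_{\max}-k_{*}=O(\log(NK))$ in the case~$\beta=d$ is an overestimate: since~$r_{k_{*}}\sim 1/N$ and~$r_{k_{\max}}\sim 1$, one actually has~$k_{\max}-k_{*}=O(\log N)$ regardless of~$K$, so the far part is~$O(N^{d}\log N)$; your weaker bound still suffices because the near part already carries a~$K^{d}$ factor. Second, the sentence ``when~$a\in\cD_{N}$ the near part is empty'' is not literally true: there is still a first shell at scale~$r_{0}=1/(2N)$, contributing~$O(N^{\beta})$, but since then~$k_{*}=O(1)$ this is absorbed into the far-part bound in all three regimes. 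Neither point affects the validity of the argument.
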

%
%
\begin{remark}
The estimates given in Lemma~\ref{lem:convsumRiem} in the case $a\in
\cD
_{N}$ are standard and optimal. The main technical problem of
Lemma~\ref
{lem:convsumRiem} lies in the case of $a\notin\cD_{N}$: in this case,
the point $a$ of the discretization $\cD_{K}$ can be arbitrarily close
to one point $ \frac{j}{2N}$ in the above sum. Those points belong to
the discretization $\cD_{N}$. The minimal distance between $a$ and the
discretization $\cD_{N}$ depends on $K$ (actually it depends on the
greatest common divisor of $K$ and $N$; see the proof of Lemma~\ref
{lem:convsumRiem}). This explains the dependence in $K$ of the
estimations of Lemma~\ref{lem:convsumRiem}.

The proof of Lemma~\ref{lem:convsumRiem} is postponed to the \hyperref
[app]{Appendix}.
Lemma~\ref{lem:convsumRiem} will be at the basis of most of the
estimations in this section.
\end{remark}

Theorem~\ref{theo:LLNpowerinfd} is a consequence of the two following
propositions:
%
%
\begin{proposition}
\label{prop:initialcond}
Let fix $\alpha\in[0, d)$, $\gamma$ and $p$ defined in \eqref
{eq:assgamma} and \eqref{eq:p}, respectively. There exists a constant
$C_{1}>0$ (that only depends on $p$ and $C_{0}$ defined in\vadjust{\goodbreak} Lemma~\ref
{lem:convsumRiem}) such that for all $K\geq1$, $N\geq1$, $a\in\cD_{K}$
and $f\in\cC_{a}$ with $\| f \|_{a}\leq1$,
%
%
\begin{eqnarray}
\label{eq:initialcondK} &&\bE\bigl\| \bigl\langle P_{0, T}f, \nu_{0}^{(N)}
\bigr\rangle - \langle P_{0, T}f, \nu _{0}\rangle
\bigr\|^{p}
\nonumber
\\[-8pt]
\\[-8pt]
\nonumber
&&\qquad\leq C_{1} %
\cases{\displaystyle \biggl(
\frac{K^{d}}{N^{\gamma\wedge1}} \biggr)^{p}, &\quad $\mbox{if } \displaystyle\alpha \in \biggl[0,
\frac{d}{2} \biggr),$\vspace*{2pt}
\cr
\displaystyle\biggl(\frac{K^{d}\ln N}{N^{ {d}/{2}\wedge1}}
\biggr)^{p}, &\quad $\mbox{if }\displaystyle \alpha=\frac{d}{2},$\vspace*{2pt}
\cr
\displaystyle\biggl(\frac{K^{ {3d}/{2}}\ln N}{N^{(d-\alpha)\wedge1}}
 \biggr)^{p}, &\quad
 $\mbox{if } \displaystyle\alpha\in \biggl(
\frac{d}{2}, d \biggr)$.} %
\end{eqnarray}
Moreover, in the case where $a\in\cD_{N}$, the previous estimates are
true for $K=1$.
\end{proposition}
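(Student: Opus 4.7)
Set $\varphi := P_{0,T}f$. By Lemma~\ref{lem:propagatorLip}, $\varphi \in \cC_a$ with $\N{\varphi}_a$ bounded by a constant depending only on $T$, $\Gamma$, $\Psi$ and $c$, which translates into $\N{x-a}^{\alpha}|\varphi(\tta,\om,x)| \leq C$ and into the fact that $h(x) := \N{x-a}^{2\gamma}\bar\varphi(x)$, where $\bar\varphi(x) := \int \varphi(\tta,\om,x)\zeta(\dd\tta)\mu(\dd\om)$, is bounded and $(2\gamma-\alpha)\wedge 1$-H\"older on $\big[-\tfrac{1}{2},\tfrac{1}{2}\big]^{d}$. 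Since the $(\tta_{j}(0),\om_{j})$ are i.i.d.\ under $\zeta\otimes\mu$ and the $x_{j}$ are deterministic, I split
\begin{equation*}
\cro{\varphi}{\nu_{0}^{(N)} - \nu_{0}} = S_{\mathrm{rand}} + S_{\mathrm{det}},
\end{equation*}
with $S_{\mathrm{rand}} := |\gL_{N}|^{-1}\sum_{j}[\varphi(\tta_{j}(0),\om_{j},x_{j}) - \bar\varphi(x_{j})]$ a sum of centred independent random variables, and $S_{\mathrm{det}} := |\gL_{N}|^{-1}\sum_{j}\bar\varphi(x_{j}) - \int \bar\varphi(x)\dd x$ the (deterministic) Riemann-sum error for the singular integrand $\bar\varphi$.

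For the random part, Rosenthal's inequality (reducing to Bienaym\'e when $p=2$) and the pointwise bound $|\varphi(\cdot,\cdot,x_{j})|\leq C\N{x_{j}-a}^{-\alpha}$ give
\begin{equation*}
\bE|S_{\mathrm{rand}}|^{p} \leq C_{p}\!\left[\left(\tfrac{1}{|\gL_{N}|^{2}}\sum_{j}\N{x_{j}-a}^{-2\alpha}\right)^{p/2} + \tfrac{1}{|\gL_{N}|^{p}}\sum_{j}\N{x_{j}-a}^{-p\alpha}\right],
\end{equation*}
and both sums are then controlled using Lemma~\ref{lem:convsumRiem} with $\beta = 2\alpha$ and $\beta = p\alpha$. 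The definition of $p$ in \eqref{eq:p} ensures $q\alpha<d$, which is precisely what guarantees that the two Rosenthal contributions are of comparable order after taking the $p$-th root.

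For $S_{\mathrm{det}}$, write $S_{\mathrm{det}} = \sum_{j}\int_{Q_{j}}[\bar\varphi(x_{j})-\bar\varphi(x)]\dd x$, where $Q_{j}$ is the cube of volume $|\gL_{N}|^{-1}$ centred at $x_{j}$. On the cube $Q_{j_{0}}$ containing $a$, the bare bound $|\bar\varphi|\leq C\N{\cdot-a}^{-\alpha}$ (integrable since $\alpha<d$) yields $\int_{Q_{j_{0}}}|\bar\varphi(x)|\dd x\leq CN^{\alpha-d}$ and $|Q_{j_{0}}|\,|\bar\varphi(x_{j_{0}})|\leq C|\gL_{N}|^{-1}\N{x_{j_{0}}-a}^{-\alpha}$. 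On every other cube, using the factorisation $\bar\varphi=\N{\cdot-a}^{-2\gamma}h$ and the decomposition
\begin{equation*}
\bar\varphi(x)-\bar\varphi(x_{j}) = \N{x-a}^{-2\gamma}[h(x)-h(x_{j})] + h(x_{j})\bigl[\N{x-a}^{-2\gamma}-\N{x_{j}-a}^{-2\gamma}\bigr],
\end{equation*}
the H\"older bound on $h$ and a mean-value estimate for $y\mapsto\N{y-a}^{-2\gamma}$ (valid because $|x-x_{j}|$ is of order $1/N$ while $\N{x_{j}-a}\gtrsim 1/N$ outside $Q_{j_{0}}$) both produce a factor $N^{-(2\gamma-\alpha)\wedge 1}$ times $\N{x_{j}-a}^{-2\gamma}$. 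Summing in $j$ reduces $|S_{\mathrm{det}}|$ to controlling $|\gL_{N}|^{-1}\sum_{j}\N{x_{j}-a}^{-2\gamma}$, which is again the content of Lemma~\ref{lem:convsumRiem} with $\beta=2\gamma$.

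The three regimes in the statement correspond to the three cases $\beta<d$, $\beta=d$, $\beta>d$ of Lemma~\ref{lem:convsumRiem} applied with $\beta=2\gamma$ (resp.\ $\beta=2\alpha,p\alpha$ on the random side): for $\alpha<d/2$ one has $2\gamma<d$ and one uses the freedom $\gamma\in[\alpha,d/2)$ to note that $(2\gamma-\alpha)\wedge 1\geq\gamma\wedge 1$, producing the announced rate $K^{d}/N^{\gamma\wedge 1}$; for $\alpha\geq d/2$ the constraint $\gamma=d/2$ forces the critical case $2\gamma=d$, which is responsible for the $\ln N$ factor and, in the case $\alpha>d/2$, for the worse $K$-exponent coming from combining the random and deterministic contributions. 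When $a\in\cD_{N}$, Lemma~\ref{lem:convsumRiem} loses its $K$-dependence, yielding the claim with $K=1$. The main obstacle is exactly this interplay between the singularity at $a$ and the lattice $\cD_{N}$: when $a\notin\cD_{N}$ the closest grid point $x_{j_{0}}$ may sit at distance as small as $O((KN)^{-1})$ from $a$, and it is this possible proximity that generates the $K$-dependence of Lemma~\ref{lem:convsumRiem}, and hence of the final bound.
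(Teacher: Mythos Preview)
Your proof is correct and follows essentially the same strategy as the paper: split $\langle\varphi,\nu_0^{(N)}-\nu_0\rangle$ into a centred random sum and a deterministic Riemann error, control the former by moment inequalities together with the lattice sums of Lemma~\ref{lem:convsumRiem}, and control the latter via the $(2\gamma-\alpha)\wedge1$-H\"older regularity of $\N{\cdot-a}^{2\gamma}\bar\varphi$ combined again with Lemma~\ref{lem:convsumRiem}.

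There is one genuine difference in the deterministic step worth flagging. The paper does \emph{not} use your product decomposition $\bar\varphi=\N{\cdot-a}^{-2\gamma}h$; it instead writes the symmetric inequality
\[
|\Phi(x)-\Phi(y)|\leq\bigl|\N{x-a}^{-\gamma}-\N{y-a}^{-\gamma}\bigr|\bigl(\N{x-a}^{\gamma-\alpha}+\N{y-a}^{\gamma-\alpha}\bigr)+\frac{\bigl|\N{x-a}^{2\gamma}\Phi(x)-\N{y-a}^{2\gamma}\Phi(y)\bigr|}{\N{x-a}^{\gamma}\N{y-a}^{\gamma}},
\]
which produces three sums with exponents $\beta=\alpha+\gamma$ and $\beta=2\gamma$ in Lemma~\ref{lem:convsumRiem}. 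Your route reduces everything to $\beta=2\gamma$, which is slightly cleaner, but your statement that the mean-value term ``produces a factor $N^{-(2\gamma-\alpha)\wedge1}$ times $\N{x_j-a}^{-2\gamma}$'' is only valid if you use the \emph{sharper} pointwise bound $|h(x_j)|\leq C\N{x_j-a}^{2\gamma-\alpha}$ (from $|\bar\varphi|\leq C\N{\cdot-a}^{-\alpha}$), not merely that $h$ is bounded, together with $\N{x_j-a}\gtrsim N^{-1}$; with only ``$h$ bounded'' the bound $N^{-1}\N{x_j-a}^{-2\gamma-1}$ fails to sum to the claimed rate when $2\gamma+1>d$. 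This is a one-line fix, and once made your argument even yields a slightly better $K$-exponent than the paper's stated $K^{3d/2}$ in the regime $\alpha>d/2$.
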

%
%
\begin{proposition}
\label{prop:martingale}
Let fix $\alpha\in[0, d)$, $\gamma$ and $p$ defined in \eqref
{eq:assgamma} and \eqref{eq:p}, respectively. There exists a constant
$C_{2}>0$ such that for all $K\geq1$, for all $a\in\cD_{K}$, for all
$f\in\cC_{a}$ such that $\| f \|_{a}\leq1$
%
%
\begin{eqnarray}
&&\bE\biggl\| \frac{1}{ \llvert \gL_{N} \rrvert } \sum_{k}\int
_{0}^{T} \nabla_{\tta}
(P_{t, T}f ) \bigl(\tta_{k}(t), \omega_{k},
x_{k} \bigr) \cdot\mathrm{d}B_{k}(t)\biggr \|^{p}
\nonumber
\\[-8pt]
\\[-8pt]
\nonumber
&&\qquad\leq C_{2} %
\cases{ \displaystyle\biggl(\frac{K^{d}}{N^{d}}
\biggr)^{ {p}/{2}}, &\quad $\mbox{if } \displaystyle\alpha\in \biggl[0, \frac{d}{2} \biggr)$,
\vspace*{2pt}
\cr
\displaystyle\biggl(\frac{K^{d}\ln N}{N^{d}} \biggr)^{ {p}/{2}}, & \quad $\mbox{if }
\displaystyle\alpha=\frac{d}{2}$,\vspace*{2pt}
\cr
\displaystyle\biggl(\frac{K^{d}}{N^{d-\alpha}}
\biggr)^{p}, & \quad $\mbox{if } \displaystyle\alpha \in \biggl(\frac{d}{2}, d
\biggr)$. } %
\end{eqnarray}
Moreover, in the particular case where $a\in\cD_{N}$, the previous
estimates are true for $K=1$.
\end{proposition}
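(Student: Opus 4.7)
The plan is to recognise the left-hand side as the $p$-th moment of a continuous scalar-valued martingale, apply Burkholder-Davis-Gundy (BDG), then use the pointwise bound on $\nabla_\tta P_{t,T}f$ provided by Remark~\ref{rem:nablafalpha} combined with Lemma~\ref{lem:propagatorLip}. The essential feature to exploit is that, after majorising the integrand by its supremum in $(\tta,\om)$, the quadratic variation becomes purely deterministic, so the whole problem collapses onto the lattice-sum estimates of Lemma~\ref{lem:convsumRiem}.

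Concretely, set $M_t := \frac{1}{|\gL_{N}|} \sum_k \int_0^t \nabla_\tta(P_{s,T}f)(\tta_k(s), \om_k, x_k) \cdot \sig\, dB_k(s)$ (the dot product is $\bR$-valued and the $B_k$ are independent $\bR^m$-valued Brownian motions, so $M_t$ is a scalar martingale). Its bracket is
\[
\langle M \rangle_T = \frac{1}{|\gL_{N}|^2} \sum_k \int_0^T \|\sig^T \nabla_\tta(P_{s,T}f)(\tta_k(s), \om_k, x_k)\|^2\, ds.
\]
Since Lemma~\ref{lem:propagatorLip} gives $P_{s,T}f \in \cC_a$ with explicit control of $\N{P_{s,T}f}_{a}$, combining \eqref{eq:nablafalpha} with \eqref{eq:propLipPLinfd} and $\N{f}_a \leq 1$ produces $\|\nabla_\tta(P_{s,T}f)(\tta,\om,x_k)\| \leq C e^{C(T-s)}\|x_k - a\|^{-\alpha}$ uniformly in $(\tta,\om)$, with $C$ depending only on the model parameters. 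Substituting and integrating in $s$ yields the deterministic upper bound $\langle M \rangle_T \leq C'\,|\gL_{N}|^{-2} \sum_{k \in \gL_N} \|x_k - a\|^{-2\alpha}$. Since $p \geq 2$, BDG gives $\bE|M_T|^p \leq C_p\, \bE \langle M \rangle_T^{p/2}$; as $\langle M \rangle_T$ is deterministic this simplifies to
\[
\bE|M_T|^p \leq C''_p \left(\frac{1}{|\gL_{N}|^2} \sum_{k \in \gL_N} \|x_k - a\|^{-2\alpha}\right)^{p/2}.
\]

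The final step is to apply Lemma~\ref{lem:convsumRiem} with $\beta = 2\alpha$, noting that $|\gL_N|^2 \asymp N^{2d}$: when $\alpha < d/2$ (so $\beta<d$) the sum is of order $N^d K^d$ and the announced $(K^d/N^d)^{p/2}$ follows; at the critical $\alpha = d/2$ (so $\beta=d$) a logarithmic correction appears, yielding $(K^d \ln N / N^d)^{p/2}$; for $\alpha > d/2$ (so $\beta>d$) the sum is of order $N^{2\alpha} K^{2\alpha}$, producing $(K^\alpha/N^{d-\alpha})^p$, which is absorbed into the stated $(K^d/N^{d-\alpha})^p$ since $\alpha<d$ and $K\geq1$. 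When in addition $a \in \cD_N$, the sharper estimates in Lemma~\ref{lem:convsumRiem} drop the factor $K^d$ (respectively $K^{2\alpha}$), and one can set $K=1$.

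No serious obstacle is anticipated: the independence of the $B_k$ and the uniform-in-$(\tta,\om)$ gradient bound together make BDG immediate, and all the delicate interaction between the singularity at $x_k = a$, the discretisation scale $N$ and the test-function grid $\cD_K$ is encapsulated in Lemma~\ref{lem:convsumRiem}. The only mild cosmetic looseness occurs in the regime $\alpha > d/2$, where the natural $K$-exponent is $\alpha p$ rather than $dp$; this gap is harmless because the Fr\'echet-type distance $d^{(p)}_\infty$ of Definition~\ref{def:distancePLinfd} already carries the compensating weight $e^{-CK^{dp/q}}/K^{2d}$.
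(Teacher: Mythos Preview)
Your proposal is correct and follows essentially the same route as the paper: write the quadratic variation, bound the integrand pointwise via Remark~\ref{rem:nablafalpha} and Lemma~\ref{lem:propagatorLip} so that $\langle M\rangle_T$ becomes deterministic, then invoke Lemma~\ref{lem:convsumRiem} with $\beta=2\alpha$ and conclude by BDG. Your observation that the natural $K$-exponent in the regime $\alpha>d/2$ is $\alpha p$ rather than $dp$ is also exactly what the paper obtains (it writes $K^{2\alpha}/N^{2(d-\alpha)}$ for the quadratic variation) before absorbing it into the stated bound.
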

Let us admit for a moment Propositions \ref{prop:initialcond} and \ref
{prop:martingale}. Then the result of Theorem~\ref{theo:LLNpowerinfd}
is a straightforward consequence of the following proposition:

%
\begin{proposition}
\label{prop:distanceKN}
Under the assumptions made above, there exist constants $C_{3}$ and
$C_{4}$ such that for all $K,N\geq1$, one has
%
%
\begin{equation}
\label{eq:distanceKgeqN} \qquad\sup_{0\leq t\leq T} d_{K}\bigl(
\nu^{(N)}_{t}, \nu_{t}\bigr) \leq C_{3}
\cases{ \displaystyle\frac{1}{N^{\gamma\wedge1}} K^{d} e^{C_{4} K^{d}}, &\quad $
\mbox{if } \displaystyle\alpha \in \biggl[0, \frac{d}{2} \biggr)$,\vspace*{2pt}
\cr
\displaystyle\frac{\ln N}{N^{{d}/{2}\wedge1}}
K^{d}e^{C_{4} K^{2d}}, &\quad $\mbox{if } \displaystyle\alpha=\frac{d}{2}$,
\vspace*{2pt}
\cr
\displaystyle\frac{\ln N}{N^{(d-\alpha)\wedge1}} K^{ {3d}/{2}}e^{C_{4}
 K^{{dp}/{q}}}, &\quad $
\mbox{if } \displaystyle\alpha\in \biggl(\frac{d}{2}, d \biggr)$, } %
\end{equation}
where $q$ in \eqref{eq:distanceKgeqN} is the conjugate of $p$ and where
$C_{3}$ and $C_{4}$ are large enough constants that depend only on $p$,
$T$, $\Gamma$, $\Psi$, $c$ and on the constants $C_{1}$ and $C_{2}$
defined in Propositions \ref{prop:initialcond} and \ref{prop:martingale}.
\end{proposition}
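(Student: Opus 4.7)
The starting point is the master inequality~\eqref{eq:normpK} derived in Section~\ref{sec:propagator}, which bounds $\bE\N{\cro{f}{\nu_T^{(N)}-\nu_T}}^p$ by the sum of an initial-condition term, a martingale term, and the drift-discrepancy term $D_N$ from~\eqref{aux:DN}. I would fix $K\geq 1$, $K'\leq K$, $a\in\cD_{K'}$ and a normalized test function $f\in\cC_a$ with $\N{f}_a\leq 1$, treat each of the three contributions, and take the supremum over such $f$ only at the very end.

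The first two contributions are handled directly by Propositions~\ref{prop:initialcond} and~\ref{prop:martingale}, which deliver precisely the announced rates in $N$ together with their polynomial-in-$K$ prefactors. The heart of the argument is therefore the control of $D_N$. The factor $\frac{1}{|\gL_N|}\sum_k\rho(x_k)^q$ in~\eqref{aux:DN} equals, up to a harmless constant, $\frac{1}{|\gL_N|}\sum_k\N{x_k-a}^{-q\alpha}$; since the $x_k=k/(2N)$ form the lattice $\cD_N$, Lemma~\ref{lem:convsumRiem} applied with $\beta=q\alpha$ (which, by Remark~\ref{rem:p}, is strictly less than $d$ except at the borderline $\alpha=d/2$) bounds this quantity by $O(K^d)$ when $a\notin\cD_N$ and by $O(1)$ when $a\in\cD_N$, with an extra logarithmic factor in the boundary case.

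The decisive structural observation is the following: for each $k$, the function $(\tta,\om,x)\mapsto\Gamma(\tta_k,\om_k,\tta,\om)\Psi(x_k,x)$ belongs to $\cC_{x_k}$ with a seminorm bounded uniformly in $k$ by a constant depending only on $\Gamma$ and on the integrals $\cI_1(\Psi)$, $\cI_2(\Psi)$, $\cI_3(\Psi)$ from Assumption~\ref{ass:psi}, while its singular point $x_k$ lies in $\cD_N$. Consequently, as soon as $K\geq N$, one has $x_k\in\cD_N\subset\bigcup_{K'\leq K}\cD_{K'}$, so (up to a universal multiplicative constant) this function is an admissible test function for the distance $d_K^{(p)}(\cdot,\cdot)$. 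Each inner expectation in~\eqref{aux:DN} is therefore bounded by a constant multiple of $d_K^{(p)}(\nu_t^{(N)},\nu_t)^p$. Taking the supremum in $f$ in the master inequality yields, for every $K\geq N$, an integral inequality of the form
\[
d_K^{(p)}(\nu_T^{(N)},\nu_T)^p\leq A_N(K)+C\,\kappa(K,N)\int_0^T d_K^{(p)}(\nu_t^{(N)},\nu_t)^p\,\dd t,
\]
with $A_N(K)$ controlled by Propositions~\ref{prop:initialcond}--\ref{prop:martingale} and $\kappa(K,N)$ of order $K^{dp/q}$ (or $K^{dp/q}\ln N$ at the boundary). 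A single application of Gronwall's lemma then produces~\eqref{eq:distanceKgeqN} in the regime $K\geq N$; for the remaining range $K<N$, one uses the trivial monotonicity $d_K^{(p)}\leq d_N^{(p)}$ (its supremum runs over a strictly smaller class of test functions) to reduce to the case just treated.

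The main technical obstacle I anticipate is the borderline case $\alpha=d/2$, where $q\alpha=d$ and Lemma~\ref{lem:convsumRiem} produces an extra $\ln N$ in the estimate of $\frac{1}{|\gL_N|}\sum_k\N{x_k-a}^{-q\alpha}$. Fed naively into Gronwall, this logarithm would translate into a spurious $N^{CK^d T}$ factor; the worse exponent $K^{2d}$ appearing in the statement of the proposition at $\alpha=d/2$ suggests that the right strategy is to absorb the logarithm into an inflated polynomial-in-$K$ dependence in the Gronwall exponential (for instance by slightly enlarging the effective integrability exponent, or by splitting the $\N{x_k-a}^{-d}$ sum into a bulk part handled with exponent strictly less than $d$ and a controlled near-diagonal remainder). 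Carrying out this bookkeeping uniformly across the three regimes for $\alpha$, while keeping track of the exact polynomial-in-$K$ prefactors provided by Propositions~\ref{prop:initialcond} and~\ref{prop:martingale}, will be the main source of technical effort.
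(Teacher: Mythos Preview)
Your overall architecture for the regime $K\geq N$ matches the paper's proof: bound the first two terms by Propositions~\ref{prop:initialcond} and~\ref{prop:martingale}, recognize that $\Gamma(\tta_k,\om_k,\cdot,\cdot)\Psi(x_k,\cdot)\in\cC_{x_k}$ with $x_k\in\cD_N\subset\bigcup_{K'\leq K}\cD_{K'}$, close the loop through $d_K^{(p)}$, and apply Gronwall. Two points, however, need correction.

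\medskip
\textbf{The borderline $\alpha=d/2$ is not an obstacle.} You write that $q\alpha<d$ ``except at the borderline $\alpha=d/2$'' and worry about an extra $\ln N$ in the Gronwall coefficient. This misreads the definition~\eqref{eq:p}: $\lceil x\rceil$ is the smallest integer \emph{strictly} larger than $x$, so at $\alpha=d/2$ one has $p=3$, $q=3/2$, and $q\alpha=3d/4<d$. Remark~\ref{rem:p} states explicitly that $q\alpha<d$ holds in every case. Hence Lemma~\ref{lem:convsumRiem} always applies with $\beta=q\alpha<d$, the factor $\bigl(\frac{1}{|\gL_N|}\sum_k\N{x_k-a}^{-q\alpha}\bigr)^{p/q}$ is at most $CK^{dp/q}$ with no logarithmic correction, and the Gronwall exponential is simply $e^{C_4K^{dp/q}}$. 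The larger exponent $K^{2d}$ at $\alpha=d/2$ comes from $dp/q=2d$ (with $p=3$, $q=3/2$), not from any splitting or inflation; the $\ln N$ in the final statement originates solely from Propositions~\ref{prop:initialcond} and~\ref{prop:martingale}.

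\medskip
\textbf{The monotonicity shortcut for $K<N$ does not give the stated bound.} You propose to handle $K<N$ via $d_K^{(p)}\leq d_N^{(p)}$ and the already-proved case $K=N$. But the right-hand side of~\eqref{eq:distanceKgeqN} at $K=N$ carries factors $N^d e^{C_4 N^{dp/q}}$ (or worse), which are much larger than the claimed $K^d e^{C_4K^{dp/q}}$ and in fact swamp the decay in $N$. The paper does \emph{not} use monotonicity here. Instead it runs a separate self-consistent Gronwall argument restricted to test functions with singularity in $\cD_N$: for such $f$ the ``$a\in\cD_N$'' clauses of Propositions~\ref{prop:initialcond}, \ref{prop:martingale} and Lemma~\ref{lem:convsumRiem} allow one to take $K=1$ in all prefactors, yielding the clean estimate~\eqref{eq:supfIN} with no $K$-growth. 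This estimate is then substituted directly for the inner expectation in~\eqref{eq:normpinfd} (no further Gronwall needed), and combined with the $K$-dependent bounds from Propositions~\ref{prop:initialcond} and~\ref{prop:martingale} for the outer test function, which has its singularity in $\cD_{K'}$ with $K'\leq K$. This two-step structure is what produces the correct $K$-dependence for $K\leq N$.
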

\begin{pf}
Let us fix $K\geq1$, $a\in\cD_{K}$ and $f\in\cC_{a}$ with $\| f \|
_{a}\leq1$.
Let us recall the estimate obtained in \eqref{eq:normpK} and \eqref{aux:DN},
%
%
\begin{eqnarray}
\label{eq:normpinfd}&& \bE\bigl\| \bigl\langle f, \nu_{T}^{(N)}-
\nu_{T}\bigr\rangle \bigr\|^{p}\nonumber\\
&&\qquad \leq
3^{p-1}\bE\bigl\| \bigl\langle P_{0, T}f, \nu_{0}^{(N)}-
\nu_{0}\bigr\rangle \bigr\| ^{p} \nonumber\\
&&\qquad\quad{}+ 3^{p-1}\bE\biggl\|
\frac{1}{ \llvert \gL_{N} \rrvert } \sum_{k}\int
_{0}^{T} \nabla_{\tta}
(P_{t, T}f ) \bigl(\tta_{k}(t), \omega_{k},
x_{k} \bigr) \cdot\bigl(\sig\, \mathrm {d}B_{k}(t)\bigr)
\biggr\|^{p}
\\
&&\qquad\quad{}+ 3^{p-1} \biggl(\frac{e^{q|\!|\!|P |\!|\!|T}-1}{q
|\!|\!|P |\!|\!|} \biggr)^{ {p}/{q}} \biggl(
\frac{1}{ \llvert \gL
_{N} \rrvert } \sum_{k} \frac{1}{ \llvert  x_{k} -a \rrvert
^{q\alpha}}
\biggr)^{{p}/{q}}\nonumber\\
&&\quad\qquad{}\times
\int_{0}^{T} \frac{1}{
\llvert \gL_{N} \rrvert } \sum
_{k} \bE\bigl\| \bigl\langle\Gamma (\tta_{k},
\omega_{k}, \cdot, \cdot )\Psi (x_{k}, \cdot ), \nu
_{t}^{(N)} - \nu_{t}\bigr\rangle \bigr\|^{p}
\,\mathrm{d}t.\nonumber
\end{eqnarray}
We understand here the necessity of choosing $p$ (and its conjugate
$q$) different from $2$.
Indeed, the integer $q$ (recall Remark~\ref{rem:p}) has been precisely
chosen such that $q\alpha<d$ which ensures that the term\vspace*{1pt} $ (\frac
{1}{ \llvert \gL_{N} \rrvert } \sum_{k} \frac{1}{ \| x_{k}
-a \|^{q\alpha}}  )^{ {p}/{q}}$ is finite: more precisely, an
application of Lemma~\ref{lem:convsumRiem}, \eqref{eq:Riemleq1} shows
that this quantity is smaller than $K^{{dp}/{q}}$ whenever $a\in
\cD
_{K}$ and smaller than $1$ in the particular case where $a\in\cD_{N}$.

Let us now prove \eqref{eq:distanceKgeqN} in the case where $K>N$.
Notice first that, thanks to the assumptions made on $\Psi$ and
$\Gamma
$ in Section~\ref{subsec:assumptions}, for all $k$ the function
$f_{k}\dvtx(\tta, \omega, x)\mapsto\Gamma (\tta_{k}, \omega
_{k}, \tta
, \omega
)\Psi (x_{k}, x  )$ belongs to the space $\cC_{x_{k}}$ where
$x_{k}\in\cD_{N}$. Indeed [recall the definition of $\cI_{1}(\Psi)$
\eqref{eq:discont}], for all $k$ and $(\tta, \omega, \bar\tta,
\bar\omega, x)$,
\begin{eqnarray*}
&&\| x-x_{k} \|^{\alpha}\Psi (x_{k}, x )\bigl\| \Gamma(
\tta _{k}, \omega _{k}, \tta, \omega) - \Gamma(
\tta_{k}, \omega_{k}, \bar\tta, \bar\omega)\bigr \| \\
&&\qquad\leq
\cI_{1}(\Psi)\| \Gamma \|_{\mathrm{Lip}} \bigl(\| \bar\tta- \tta \| + \|
\bar\omega- \omega \| \bigr)
\end{eqnarray*}
and
\[
\| x-x_{k} \|^{\alpha}\Psi (x_{k}, x )\bigl\| \Gamma(
\tta _{k}, \omega _{k}, \tta, \omega) \bigr\| \leq
\cI_{1}(\Psi)\| \Gamma \| _{\infty}.
\]
As far as condition \eqref{eq:falphaHolder} is concerned, we have
[using \eqref{eq:holderpsi}]
\begin{eqnarray*}
&&\bigl\| \| x-x_{k} \|^{2\gamma} f_{k}(\tta, \omega, x) -
\| y-x_{k} \| ^{2\gamma} f_{k}(\tta, \omega, y)\bigr \|
\\
&&\qquad\leq\| \Gamma \|_{\infty} \bigl\llvert \| x-x_{k}
\|^{2\gamma} \Psi(x_{k}, x) - \| y-x_{k} \|
^{2\gamma} \Psi(x_{k}, y) \bigr\rrvert \\
&&\qquad\leq\cI_{3}(
\Psi)\| \Gamma \|_{\infty} \llvert x-y \rrvert ^{(2\gamma-\alpha)\wedge1}.
\end{eqnarray*}
Therefore, since $K>N$, by definition of the distance
$d^{(p)}_{K}(\cdot
, \cdot)$ (recall Definition~\ref{def:distancePLinfd}), for all $k$,
the following holds:
\[
\bE\bigl\| \bigl\langle\Gamma (\tta_{k}, \omega_{k}, \cdot,
\cdot )\Psi (x_{k}, \cdot ), \nu_{t}^{(N)} - \nu
_{t}\bigr\rangle \bigr\|^{p} \leq\eta _{1}d^{(p)}_{K}
\bigl(\nu_{t}^{(N)}, \nu_{t}\bigr)^{p}
\]
for the constant $\eta_{1}:=\max (\cI_{1}(\Psi)\| \Gamma \|
_{\mathrm{Lip}}, \cI_{1}(\Psi), \cI_{3}(\Psi)\| \Gamma \|_{\infty
} )^{p}$. Using this\break
estimate in \eqref{eq:normpinfd} and taking the supremum over all
functions $f$ in\break  $\bigcup_{a\in\cD_{L}, 1\leq L\leq K}\cC
_{a}$, one obtains
\begin{eqnarray*}
&& d^{(p)}_{K}\bigl(\nu^{(N)}_{T},
\nu_{T}\bigr)^{p}\\
&&\qquad \leq 3^{p-1}\sup
_{f}\bE\bigl\| \bigl\langle P_{0, T}f,
\nu_{0}^{(N)}\bigr\rangle - \langle P_{0, T}f, \nu
_{0}\rangle\bigr \|^{p}
\\
&&\qquad\quad{}+ 3^{p-1}\sup_{f}\bE\biggl\| \frac{1}{ \llvert \gL_{N} \rrvert } \sum
_{k}\int_{0}^{T}
\nabla_{\tta} (P_{t, T}f ) \bigl(\tta_{k}(t),
\omega_{k}, x_{k} \bigr) \cdot\bigl(\sig\,\mathrm
{d}B_{k}(t)\bigr) \biggr\|^{p}
\\
&&\qquad\quad{}+ 3^{p-1} \eta _{2} K^{ {dp}/{q}} \int
_{0}^{T} d^{(p)}_{K}\bigl(
\nu^{(N)}_{t}, \nu _{t}\bigr)^{p} \,
\mathrm{d}t
\end{eqnarray*}
for $\eta_{2}:= \eta_{1}  (\frac{e^{q|\!|\!|P |\!|\!|T}-1}{q
|\!|\!|P |\!|\!|} )^{ {p}/{q}}$. The results of
Propositions \ref{prop:initialcond} and \ref{prop:martingale} together
with an application of Gronwall's lemma leads to the estimate \eqref
{eq:distanceKgeqN} in the case where $K>N$. Note that one can choose in
this case the constants $C_{3}:= 3^{{(p-1)}/{p}}  (2\max
(C_{1}, C_{2} ) )^{{1}/{p}}$ (where $C_{1}$ and $C_{2}$
come from Propositions \ref{prop:initialcond} and~\ref
{prop:martingale}) and $C_{4}:= \frac{3^{p-1}}{p} T\eta_{2}$.

Let us now turn to the case where $K\leq N$. In this situation,
we\break
cannot use Gronwall's inequality in order to obtain an analogous
estimate\break  on $d^{(p)}_{K}(\nu^{(N)}, \nu)$, since the function $f_{k}$
($k\in\gL_{N}$) defined at the beginning\break  of this proof has not the
sufficient regularity ($f_{k}$ belongs to $\cC_{x_{k}}$ where\break
$x_{k}\in
\cD_{N}$ and hence may not belong to
$\bigcup_{a\in\cD_{K'},
1\leq K'\leq K}\cC_{a}$ for $K<N$). Non\-etheless, one can bound the
term $\frac{1}{\eta_{1}}\bE\| \langle\Gamma (\tta_{k},
\omega_{k}, \cdot, \cdot )\Psi (x_{k}, \cdot ),
\nu_{t}^{(N)} - \nu _{t}\rangle \|^{p}$ by\break  $\sup_{f} \bE\| \langle
f, \nu^{(N)}_{t}\rangle - \langle f, \nu _{t}\rangle \|^{p}$,
where the supremum is taken over functions $f$ in $\bigcup_{a\in\cD_{N}}\cC_{a}$ with $\| f \|_{a}\leq1$. Using this estimate in
\eqref{eq:normpinfd} and a calculation similar to the previous one
gives the following estimate:
%
%
\begin{eqnarray}
\label{eq:supfIN}&& \sup_{0\leq t\leq T}
\sup_{f\in\bigcup_{a\in\cD_{N}}\cC_{a}}
\bigl(\bE\bigl\| \bigl\langle f, \nu^{(N)}_{t}\bigr\rangle -
\langle f, \nu _{t}\rangle\bigr \|^{p} \bigr)
\nonumber
\\[-8pt]
\\[-8pt]
\nonumber
&&\qquad\leq
\bigl(C_{3}e^{C_{4}} \bigr)^{p} %
\cases{
\displaystyle\biggl(\frac{1}{N^{\gamma\wedge1}} \biggr)^{p},
&\quad $\mbox{if } \displaystyle\alpha\in \biggl[0,
\frac{d}{2} \biggr)$,\vspace*{2pt}
\cr
\displaystyle\biggl(\frac{\ln N}{N^{ {d}/{2}\wedge1}}
\biggr)^{p}, &\quad $\mbox {if }\displaystyle \alpha=\frac{d}{2}$,\vspace*{2pt}
\cr
\displaystyle\biggl(\frac{1}{N^{(d-\alpha)\wedge1}} \biggr)^{p}, &\quad
$\mbox{if }\displaystyle \alpha \in \biggl(
\frac{d}{2}, d \biggr)$. } %
\end{eqnarray}
But then, for instance in the case $\alpha\in [0, \frac
{d}{2} )$
(we let the two other cases to the reader), for all $K\leq N$, for all
$f\in\bigcup_{a\in\cD_{K'}}\cC_{a}$ for $K'\leq K$,\ inserting directly
\eqref{eq:supfIN} into \eqref{eq:normpinfd} and using again
Propositions \ref{prop:initialcond} and \ref{prop:martingale} leads to
\begin{eqnarray*}
\bE\bigl\| \bigl\langle f, \nu_{t}^{(N)}\bigr\rangle - \langle f
, \nu _{T}\rangle \bigr\|^{p} &\leq&3^{p-1}
C_{1} \biggl( \frac{K^{d}}{N^{\gamma\wedge1}} \biggr)^{p} +
3^{p-1} C_{2} \biggl( \frac{K^{d/2}}{N^{d/2}}
\biggr)^{p}
\\
&&{}+ 3^{p-1} \biggl(\frac
{e^{q|\!|\!|
P |\!|\!|T}-1}{q |\!|\!|P |\!|\!|} \biggr)^{{p}/{q}} T
\bigl(C_{3} e^{C_{4}} \bigr)^{p} \biggl(
\frac{K^{ {d}/{q}}}{N^{\gamma
\wedge1}} \biggr)^{p}.
\end{eqnarray*}
Up to a change in the constant $C_{3}$, this term is anyway smaller
than $  (\frac{C_{3}}{N^{\gamma\wedge1}} K^{d}\times e^{C_{4}
K^{d}}
)^{p}$. Taking the supremum over all $f$ in $\bigcup_{a\in
\cD
_{K'}, K'\leq K}\cC_{a}$, one obtains the result.
\end{pf}
The rest of this part is devoted to the proofs of Propositions \ref
{prop:initialcond} and \ref{prop:martingale}:
\begin{pf*}{Proof of Proposition~\ref{prop:initialcond}}
Recall that the couples $(\tta_{i}(0), \omega_{i})_{1\leq i\leq N}$ are
supposed to be chosen i.i.d. according to the law $\zeta(\mathrm
{d}\tta
)\otimes
\mu(\mathrm{d}\omega)$ on $\cX\times\cE$. Fix $a= \frac{l}{K}\in
\cD
_{K}$, $f\in\cC
_{a}$ with $\| f \|_{a}\leq1$ as well as $\alpha\in(0, d)$ and the
integer $p\geq2$ defined in \eqref{eq:p}. Write again $\vphi:=P_{0,T}f$
for simplicity. Then
\begin{eqnarray*}
\delta_{N}(f)&:=& \bE\bigl\| \bigl\langle P_{0, T}f,
\nu_{0}^{(N)}\bigr\rangle - \langle P_{0, T}f,
\nu_{0}\rangle\bigr \|^{p}
\\
&=&\bE\biggl\| \frac{1}{\llvert \Lambda_{N} \rrvert  }\sum_{j} \vphi(
\tta_{j}, \omega _{j}, x_{j}) - \int\vphi(
\tta, \omega, x) \zeta(\mathrm{d}\tta )\mu(\mathrm{d}\omega)\,\mathrm{d}x
\biggr\|^{p}
\\
&\leq&2^{p-1}\bE\biggl\| \frac{1}{\llvert \Lambda_{N} \rrvert
}\sum_{j}
\vphi(\tta_{j}, \omega_{j}, x_{j}) -
\frac{1}{\llvert \Lambda _{N} \rrvert }\sum_{j}\int\vphi(\tta, \omega,
x_{j}) \zeta(\mathrm{d}\tta )\mu(\mathrm{d}\omega ) \biggr\|^{p}
\\
&&{}+2^{p-1}\biggl\| \frac{1}{\llvert \Lambda_{N} \rrvert }\sum_{j}
\int \vphi(\tta, \omega, x_{j}) \zeta(\mathrm{d}\tta)\mu (\mathrm{d}
\omega)\\
&&\hspace*{82pt}{}- \int\vphi(\tta, \omega, x) \zeta(\mathrm {d}\tta)
\mu(\mathrm{d}\omega)\,
\mathrm{d}x\biggr \|^{p}
\\
&:=& A_{N}+ B_{N}.
\end{eqnarray*}
For simplicity, let us write $X_{j}:= \vphi(\tta_{j}, \omega_{j},
x_{j}) -
\int\vphi(\tta, \omega, x_{j})\zeta(\mathrm{d}\tta)\mu(\mathrm
{d}\omega
)$; note that $\bE
X_{j}=0$ for all $j$. Since the $(\tta_{i}, \omega_{i})$ are i.i.d. random
variables with law $\zeta\otimes\mu$, the first term $A_{N}$ becomes
%
%
\begin{eqnarray}
\label{aux:AN1} A_{N}&=& \frac{1}{\llvert \Lambda_{N} \rrvert ^{p}} \sum
_{l=1}^{\lfloor p/2\rfloor}\sum_{(k_{1}+\cdots+k_{l}=\lfloor
p/2\rfloor
)}
\sum_{j_{1}, \ldots, j_{l}} \bE \bigl(X_{j_{1}}^{2k_{1}}
\cdots X_{j_{l}}^{2k_{l}} \bigr)
\nonumber
\\[-8pt]
\\[-8pt]
\nonumber
&\leq&\frac{2^{2\lfloor p/2\rfloor}}{\llvert \Lambda_{N} \rrvert
^{p}} \sum_{l=1}^{\lfloor p/2\rfloor}
\sum_{(k_{1}+\cdots
+k_{l}=\lfloor
p/2\rfloor)}\sum_{j_{1}, \ldots, j_{l}}
\frac{1}{\| x_{j_{1}}-a \|
^{2\alpha k_{1}}}\cdots\frac{1}{\| x_{j_{l}}-a \|^{2\alpha
k_{l}}},
\end{eqnarray}
where we used $\| f \|_{a}\leq1$ and assumption \eqref{eq:falphabounded}
in \eqref{aux:AN1}. Let us concentrate on the contribution of $l=1$ to
the sum in \eqref{aux:AN1}, that we call $\tA_{N}$ (where $\tp=2
\lfloor p/2\rfloor$)
\[
\tA_{N}= \frac{2^{\tp}}{\llvert \Lambda_{N} \rrvert ^{p}} \sum_{j}
\frac{1}{\| x_{j}-a \|^{2\tp\alpha}}.
\]

Here, one has to distinguish two cases, depending on the value of
$\alpha\in[0,d)$:
\begin{longlist}[(1)]
\item[(1)] If $0\leq\alpha< \frac{d}{2}$, then by definition $p=2$ and
$p\alpha<d$ so that an application of Lemma~\ref{lem:convsumRiem},
\eqref{eq:Riemleq1} leads to
%
%
\begin{equation}
\label{eq:estimANleq12} \tA_{N}\leq\frac{1}{N^{2d}} C_{0}
\cdot K^{d} N^{d}= C_{0} \frac
{K^{d}}{N^{d}}.
\end{equation}
\item[(2)] If $\alpha\geq\frac{d}{2}$, then $p$ is chosen such that
$p> \frac{d}{d-\alpha}$ so that $p\alpha> d$. Then Lemma~\ref
{lem:convsumRiem}, \eqref{eq:Riemgeq1} leads to
%
%
\begin{equation}
\label{eq:estimANeq12} \tA_{N}\leq\frac{1}{N^{pd}} C_{0}
\cdot K^{p\alpha} N^{p\alpha} = C_{0}\frac{K^{p\alpha}}{N^{p(d-\alpha)}}.
\end{equation}
It is also easy to see that the other terms in \eqref{aux:AN1} are
negligible w.r.t. $\tA_{N}$ as $N\to\infty$.
\end{longlist}
Let us now turn to the second term $B_{N}$: $(B_{N})^{{1}/{p}}$ is
the difference between the Riemann sum of the function $\Phi:=x\mapsto
\int\vphi(\tta, \omega, x) \zeta(\mathrm{d}\tta)\mu(\mathrm
{d}\omega)$
and its integral,
so that it should be small with $N$. But one has to be careful since
$\vphi$ as a discontinuity ($\vphi$ belongs to some $\cC_{a}$ for some
$a$) and since we want to have a result uniformly in the function
$\vphi$,
%
%
\begin{eqnarray}
\label{eq:exprBN} \frac{1}{2^{p-1}}B_{N}&=&\biggl\| \frac{1}{ \llvert \gL_{N} \rrvert }\sum
_{j}\Phi(x_{j}) - \int\Phi(x)
\,\mathrm{d}x \biggr\|^{p}
\nonumber
\\[-8pt]
\\[-8pt]
\nonumber
&\leq& \biggl\llvert \sum_{j}
\int_{\Delta_{j}} \bigl\| \Phi(x_{j}) - \Phi(x) \bigr\|
\,\mathrm{d}x \biggr\rrvert ^{p},
\end{eqnarray}
where $\Delta_{j}:= \{z\in [- \frac{1}{2}, \frac{1}{2}
]^{d}; \forall k=1,\dots, d, j_{k}\leq z_{k}< j_{k}+ \frac{1}{2N}\}$ is
the infinitesimal subdomain of $\gL_{N}$ of size $ \frac{1}{2N}$ of
corner $j$.
Let us begin with the following straightforward inequality:
%
%
\begin{eqnarray}
\label{eq:inegtriangPhi}&&\bigl \| \Phi(x) - \Phi(y) \bigr\|\nonumber\\
&&\qquad\leq
\bigl\| \| x-a \|^{-\gamma} - \|
y-a \| ^{-\gamma} \bigr\| \bigl\| \| x-a \|^{\gamma} \Phi(x) + \| y-a
\|^{\gamma} \Phi(y) \bigr\|
\\
&&\qquad\quad{}+ \frac{1}{ \| x-a \|^{\gamma}\| y-a \|^{\gamma}} \bigl\| \Phi(x) \| x-a \|^{2\gamma} - \Phi(y) \| y-a
\|^{2\gamma} \bigr\|.\nonumber
\end{eqnarray}
Using the assumptions made on $f$, we deduce in particular from \eqref
{eq:falphabounded} and $\| f \|_{a}\leq1$ that $\| x-a \|^{\gamma}
\Phi(x)$
is bounded by $\| x-a \|^{\gamma-\alpha}$. Using also \eqref
{eq:falphaHolder}, it is then immediate to see that
%
%
\begin{eqnarray}
\label{eq:estimPhi}
 \bigl\| \Phi(x) - \Phi(y) \bigr\| &\leq&\frac{ \| x-y \|^{\gamma}}{ \| x-a \|
^{\gamma}
\| y-a \|^{\gamma}} \bigl( \| x-a
\|^{\gamma-\alpha}+ \| y-a \| ^{\gamma
-\alpha
} \bigr) \nonumber\\
&&{}+ \frac{\| x-y \|^{(2\gamma-\alpha)\wedge1}}{ \| x-a \|
^{\gamma}\| y-a \|^{\gamma}}
\nonumber
\\[-8pt]
\\[-8pt]
\nonumber
&=& \frac{ \| x-y \|^{\gamma}}{ \| x-a \|^{\alpha} \| y-a \|^{\gamma}} + \frac
{ \| x-y \|^{\gamma}}{ \| x-a \|^{\gamma} \| y-a \|^{\alpha}} \\
&&{}+ \frac
{\| x-y \|^{(2\gamma-\alpha)\wedge1}}{ \| x-a \|^{\gamma}\| y-a \|
^{\gamma}}.\nonumber
\end{eqnarray}
Using \eqref{eq:estimPhi} in \eqref{eq:exprBN}, one obtains that
%
%
\begin{eqnarray}
\label{eq:estimBN} B_{N}&\leq&2^{p-1}  \biggl(\sum
_{j} \int_{\Delta_{j}} \frac{\|
x-x_{j} \|^{\gamma}}{\| x-a \|^{\alpha}\| x_{j}-a \|^{\gamma
}}\,
\mathrm{d}x \nonumber\\
&&\hspace*{26pt}{}+ \sum_{j} \int_{\Delta_{j}}
\frac{\| x-x_{j} \|^{\gamma}}{\| x-a \|
^{\gamma}\| x_{j}-a \|^{\alpha}}\,\mathrm{d}x
\nonumber
\\[-8pt]
\\[-8pt]
\nonumber
&&\hspace*{26pt}{} + \sum_{j} \int_{\Delta_{j}}
\frac{\| x-x_{j} \|^{(2\gamma-\alpha)\wedge1}}{\| x-a \|^{\gamma
}\| x_{j}-a \|^{\gamma
}}\,\mathrm{d}x \biggr)^{p}\\
&:=&2^{p-1}
\bigl(S_{N}^{(1)}+ S_{N}^{(2)} +
S_{N}^{(3)} \bigr)^{p}.
\nonumber
\end{eqnarray}
The first of the three sums in \eqref{eq:estimBN} can be bounded by the
following quantity:
\begin{eqnarray*}
S_{N}^{(1)}&\leq&\sum_{j}
\frac{1}{{\min (\| x_{j-1}-a \|
^{\alpha}, \| x_{j}-a \|^{\alpha} )\| x_{j}-a \|^{\gamma
}}}\int_{\Delta_{j}} \| x-x_{j}
\|^{\gamma}\,\mathrm{d}x
\\
&=& \frac{1}{N^{d+\gamma}}\sum_{j}
\frac{1}{{\min (\|
x_{j-1}-a \|^{\alpha}, \| x_{j}-a \|^{\alpha} )\| x_{j}-a \|
^{\gamma}}}.
\end{eqnarray*}
Let us once again distinguish three cases, depending on the value of
$\alpha$:
\begin{longlist}[(1)]
\item[(1)]
if $\alpha\in [0, \frac{d}{2} )$, then $\alpha+\gamma<d$ [recall
\eqref{eq:assgamma}], so that an application of Lemma~\ref
{lem:convsumRiem}, \eqref{eq:Riemleq1} leads to
%
%
\begin{equation}
\label{eq:SN1leq12} S_{N}^{(1)}\leq C_{0}
\frac{K^{d}}{N^{\gamma}};
\end{equation}
\item[(2)]
if $\alpha= \frac{d}{2}$, then $\alpha+\gamma=d$ [recall \eqref
{eq:assgamma}], so that Lemma~\ref{lem:convsumRiem}, \eqref
{eq:Riemeg1} gives
%
%
\begin{equation}
\label{eq:SN1eq12} S_{N}^{(1)}\leq C_{0}
\frac{K^{d}\ln N}{N^{{d}/{2}}};
\end{equation}
\item[(3)]
if $\alpha\in (\frac{d}{2}, d )$, then $\alpha+\gamma>d$, so
that Lemma~\ref{lem:convsumRiem}, \eqref{eq:Riemgeq1} gives
%
%
\begin{equation}
\label{eq:SN1geq12} S_{N}^{(1)}\leq C_{0}
\frac{K^{\alpha+\gamma}}{N^{d-\alpha}}\leq C_{0}
\frac{K^{ {3d}/{2}}}{N^{d-\alpha}}.
\end{equation}
\end{longlist}
The same calculation leads to the same estimates for the second term
$S_{N}^{(2)}$ in \eqref{eq:estimBN}. A very similar calculation also
leads to the following estimate for the last term $S_{N}^{(3)}$:
%
%
\begin{equation}
\label{eq:SN3} S_{N}^{(3)}\leq C_{0}
\cases{\displaystyle\frac{K^{d}}{N^{(2\gamma-\alpha)\wedge1}}, &
 \quad$\mbox{if } \displaystyle\alpha\in \biggl[0,
\frac{d}{2} \biggr)$,\vspace*{2pt}
\cr
\displaystyle\frac{K^{d}\ln N}{N^{(d-\alpha)\wedge1}}, & \quad$\mbox{if }\displaystyle
 \alpha\in
\biggl[\frac{d}{2}, d \biggr)$. } %
\end{equation}
Combining estimations \eqref{eq:SN3} and \eqref{eq:estimANleq12} [resp.,
\eqref{eq:estimANeq12}] and \eqref{eq:SN1leq12} [resp., \eqref
{eq:SN1eq12} or~\eqref{eq:SN1geq12}] leads to the desired estimation
\eqref{eq:initialcondK}.
The proof of the case where $a\in\cD_{N}$ is analogous and uses the
estimates for $a\in\cD_{N}$ in Lemma~\ref{lem:convsumRiem}.
Proposition~\ref{prop:initialcond} is proved.
\end{pf*}
It remains to prove Proposition~\ref{prop:martingale}, whose purpose is
to control the martingale term in \eqref{eq:estimfnut}:
\begin{pf*}{Proof of Proposition~\ref{prop:martingale}}
Fix some $K\geq1$, $a\in\cD_{K}$ and $f\in\cC_{a}$ such that $\| f
\|_{a}\leq1$.
The martingale $M_{t}^{N}:=\frac{1}{ \llvert \gL_{N} \rrvert }
\sum_{k}\int_{0}^{T} \nabla_{\tta}  (P_{t, T}f )
(\tta
_{k}(t), \omega_{k}, x_{k} ) \cdot\mathrm{d}B_{k}(t)$ may be
written as
$M_{t}^{N}=\frac{1}{ \llvert \gL_{N} \rrvert } \sum_{k}\sum_{l=1}^{m}\int_{0}^{T} \partial_{\tta^{(l)}}  (P_{t, T}f
)
(\tta_{k}(t), \omega_{k},\break  x_{k} ) \,\mathrm{d}B_{k}^{(l)}(t)$, where
for all
$k$, $B_{k}=(B_{k}^{(1)}, \ldots, B_{k}^{(m)})$. Consequently, its
quad\-ratic variation process is given by
\[
\bigl\langle M^{N} \bigr\rangle_{t}= \frac{1}{\llvert \gL_{N} \rrvert ^{2}}
\sum_{k}\sum_{l=1}^{m}
\int_{0}^{T}\bigl \| \partial_{\tta^{(l)}}
P_{t,
T}f \bigl(\tta_{k}(t), \omega_{k},
x_{k} \bigr)\bigr \|^{2}\,\mathrm{d}t.
\]
Applying Remark~\ref{rem:nablafalpha} and Lemma~\ref
{lem:propagatorLip}, we have almost surely that
\[
\bigl\langle M^{N} \bigr\rangle_{t}\leq
\frac{m|\!|\!|P |\!|\!|^{2}}{\llvert
\gL_{N} \rrvert ^{2}}\sum_{k} \frac{1}{ \llvert  x_{k} -a
\rrvert ^{2\alpha}}
\int_{0}^{T} e^{2|\!|\!|P |\!|\!|(T-t)}\,\mathrm{d}t.
\]
An argument repeatedly used in this work shows that one can bound the
quadratic variation by $C \frac{K^{d}}{N^{d}}$ (resp., $C \frac
{K^{d}\ln
N}{N^{d}}$ and $ C \frac{K^{2\alpha}}{N^{2(d-\alpha)}}$) when
$\alpha<
\frac{d}{2}$ (resp., $\alpha= \frac{d}{2}$ and $\alpha> \frac{d}{2}$),
for some constant $C>0$. Then the Burkholder--Davis--Gundy inequality
$\bE (\| M^{N}_{t} \|^{p} )\leq C_{p}\bE (\langle M^{N}
\rangle_{t}^{ {p}/{2}} )$
gives the result. Proposition~\ref{prop:martingale} is proved.
\end{pf*}

\section{The case of a locally Lipschitz dynamics \texorpdfstring{$c(\cdot)$}{c(.)}}
\label{sec:loclip}
One of the key arguments of the proofs of Theorems \ref{theo:LLNPnn}
and \ref{theo:LLNpowerinfd} is the fact that one can derive a
Kolmogorov equation [recall \eqref{eq:backKolm}] for the propagator
$P_{s, t}f$ defined in \eqref{eq:propagator}. Under Assumption~\ref
{ass:Gammac} on the dynamics $c(\cdot)$ (one-sided Lipschitz condition
and absence of \emph{global} Lispchitz continuity), deriving such a
Kolmogorov equation appears to be problematic; see, in particular,
\cite
{MR1311478,1209.6035}. Even if such a result existed, we could not
find a proper reference in the literature.

One can bypass this technical difficulty and prove nevertheless
Theorems~\ref{theo:LLNPnn} and \ref{theo:LLNpowerinfd} by an
approximation argument. We will suppose throughout this section that
$c$ satisfies only Assumption~\ref{ass:Gammac}.

\subsection{Yosida approximation}
Let us denote for all $(\tta, \omega)$, $\tc(\tta, \omega):=
c(\tta, \omega) -
L\tta$, where we recall that $L$ is the constant appearing in the
one-sided Lipschitz continuity assumption \eqref{eq:cgrowthcond}. In
terms of $\tc$, \eqref{eq:cgrowthcond} reads
%
%
\begin{equation}
\label{eq:dissipative} \forall(\tta, \omega), (\bar\tta, \bar\omega)\qquad
 \bigl\langle\tta
-\bar\tta, \tc(\tta, \omega)-\tc(\bar\tta, \bar\omega )\bigr\rangle \leq0,
\end{equation}
and, for example, the mean field evolution \eqref{eq:odemeanfield} reads
%
%
\begin{eqnarray}
\label{eq:odemeanfieldtilde} \mathrm{d}\tta(t)&= \tc\bigl(\tta(t), \omega\bigr)\,\mathrm{d}t +
\tv\bigl(t, \tta(t), \omega, x\bigr)\,\mathrm{d}t+ \sig \cdot\mathrm{d}B(t),
\end{eqnarray}
where $\tv(t, \tta(t), \omega, x):= v(t, \tta(t), \omega, x) +
L \tta(t)$.

For all $\lambda>0$, consider $\tc_{\lambda}$ the Yosida approximation
of $\tc$ (see \cite{MR1840644}, Appendix A, for a review of the basic
properties of Yosida approximations),
%
%
\begin{equation}
\label{eq:yosida} \forall(\tta, \omega)\qquad\tc_{\lambda}(\tta, \omega):= \tc
\bigl(R_{\lambda} (\lambda\tta), \omega\bigr)
\end{equation}
for
%
%
\begin{equation}
\forall(\tta, \omega)\qquad R_{\lambda}(\tta, \omega):= \bigl(\lambda- \tc(
\cdot, \omega) \bigr)^{-1}(\tta).
\end{equation}
Consider now the solution $\tta_{\lambda}$ of the following SDE [with
the same initial condition and driven by the same Brownian motion $B$
as in \eqref{eq:odemeanfieldtilde}]:
%
%
\begin{eqnarray}
\label{eq:odemeanfieldlambda} \mathrm{d}\tta_{\lambda}(t)&= \tc_{\lambda}\bigl(
\tta_{\lambda}(t), \omega \bigr)\,\mathrm{d}t + \tv \bigl(t,
\tta_{\lambda}(t), \omega, x\bigr)\,\mathrm{d}t+ \sig\cdot\mathrm{d}B(t),
\end{eqnarray}
that is, the analog of \eqref{eq:odemeanfieldtilde} where $\tc$ has
been replaced by its Yosida approximation. Note that one can proceed
exactly in the same way for microscopic system \eqref{eq:odegene}.
From now on, whatever $X$ may be, the subscript notation $X_{\lambda}$
will refer to the analog of $X$ when the dynamics has been replaced by
its Yosida approximation. Note that we will, most of the time, drop the
dependencies of the functions in $\omega$, for simplicity of notation.

It is easy to see that $\tc$ and $\tc_{\lambda}$ have the same
regularity in
$\tta$; see, for example, \cite{MR1840644}, page 304. Moreover, $\tc
_{\lambda}$
has the supplementary property to be uniformly Lipschitz continuous. In
other words, $\tc_{\lambda}$ satisfies Assumption~\ref{ass:Gammac}
as well as
Assumption~\ref{ass:cGlobal}, so that everything that has been done
before is applicable: Theorems \ref{theo:LLNPnn} and \ref
{theo:LLNpowerinfd} are true in the case of an interaction ruled by
$\tc_{\lambda}$
%
%
\begin{equation}
\label{eq:convdlambda} \sup_{t\in[0, T]}d \bigl(\nu_{t, \lambda}^{(N)},
\nu_{t, \lambda
} \bigr) \leq C N^{-\beta}
\end{equation}
for $d$ either equal to $d_{R}(\cdot, \cdot)$ or $d^{(p)}_{\infty
}(\cdot
, \cdot)$ and $\beta$ one of the appropriate exponent appearing in the
formulation of Theorems \ref{theo:LLNPnn} and \ref{theo:LLNpowerinfd}.
Note that the constant $C$ in \eqref{eq:convdlambda} \emph{does not
depend on $\lambda$}. Indeed, the assumption made in Section~\ref
{sec:propagator} about the global Lipschitz continuity of $c$ was made
only to ensure the existence of the Kolmogorov equation. In particular,
the modulus of continuity of $c$ did not enter into the calculation
made in Section~\ref{sec:propagator}: the only dependence in the
dynamics $c$ was in its one-sided Lipschitz constant $L$ (recall
Lemma~\ref{lem:propagatorLip}), which is conserved by the Yosida
approximation. In other words, every constant estimates made upon
evolution \eqref{eq:odemeanfieldlambda} is independent on $\lambda$.

Now, Theorems \ref{theo:LLNPnn} and \ref{theo:LLNpowerinfd} in our
general framework are an easy consequence of the triangular inequality
and the following proposition:
%
%
\begin{proposition}
\label{prop:distancelambda}
For all $N\geq1$,
%
%
\begin{eqnarray}
\label{eq:distnulambN}\sup_{t\in[0, T]}d \bigl(\nu_{t, \lambda}^{(N)},
\nu_{t}^{(N)} \bigr) &\mathop{\longrightarrow}\limits^{\lambda\to\infty}& 0,
\\
\label{eq:distnulamb}\sup_{t\in[0, T]}d (\nu_{t, \lambda}, \nu
_{t} ) &\mathop{\longrightarrow}\limits^{\lambda\to\infty}& 0.
\end{eqnarray}
\end{proposition}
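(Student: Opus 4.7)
The plan is to prove both items via \emph{synchronous coupling}: I build $\tta_{k,\lambda}$ together with $\tta_k$ (respectively $\tta_\lambda^{\om,x}$ together with $\tta^{\om,x}$ for the nonlinear flows \eqref{eq:odemeanfield}--\eqref{eq:odemeanfieldlambda}) on the same probability space, with a common initial condition and driven by the same Brownian motion, so that the stochastic integrals in Ito's formula applied to the squared distances cancel exactly. The target is then to show, uniformly in $t\in[0,T]$,
\begin{equation*}
\eta_{N,\lambda}(t):=\frac{1}{|\gL_N|}\sum_{k\in\gL_N}\bE\N{\tta_{k}(t)-\tta_{k,\lambda}(t)}^{2}\xrightarrow[\lambda\to\infty]{}0
\end{equation*}
and the nonlinear analogue $\bar\eta_\lambda(t):=\int\bE\N{\tta^{\om,x}(t)-\tta_\lambda^{\om,x}(t)}^{p}\mu(\dd\om)\dd x\to 0$ (with $p$ as in Definition~\ref{def:distancePLinfd}).

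A preliminary ingredient is the uniform-in-$\lambda$ moment bound $\sup_{\lambda>0}\sup_{k}\sup_{t\leq T}\bE\N{\tta_{k,\lambda}(t)}^{2\kappa}<\infty$. Two structural properties of the Yosida regularization are crucial: first, $\tcl$ inherits the one-sided Lipschitz constant $L$ of $\tc$, so that the drift $c_\lambda=\tcl+L\,\mathrm{Id}$ of \eqref{eq:odemeanfieldlambda} still satisfies \eqref{eq:cgrowthcond} with the \emph{same} constant $L$; second, the polynomial bound $\N{c_\lambda(\tta,\om)}\leq\Nn{c}(1+\N{\tta}^{\kappa}+\N{\om}^{\iota})$ holds with the same $\Nn{c}$ as in \eqref{eq:polgrowthc}, since $c_\lambda(\tta,\om)=c(\Rl(\lambda\tta),\om)+L(\tta-\Rl(\lambda\tta))$ and $\tta\mapsto\Rl(\lambda\tta)$ is non-expansive. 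These same two facts yield the pointwise convergence $c_\lambda(\tta,\om)\to c(\tta,\om)$ as $\lambda\to\infty$. The moment bound is then a classical application of Ito and Gronwall, using \eqref{eq:cgrowthcond}, \eqref{eq:assmu} and \eqref{eq:asszeta}.

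Applying Ito to $\N{\tta_k(t)-\tta_{k,\lambda}(t)}^{2}$ and inserting the splitting $c(\tta_k,\om_k)-c_\lambda(\tta_{k,\lambda},\om_k)=[c(\tta_k,\om_k)-c(\tta_{k,\lambda},\om_k)]+[c(\tta_{k,\lambda},\om_k)-c_\lambda(\tta_{k,\lambda},\om_k)]$, one bounds the first bracket by the one-sided Lipschitz condition, the second by Young's inequality $2ab\leq a^{2}+b^{2}$, and the interaction term by Lipschitz continuity of $\Gamma$ together with the uniform bound $S(\Psi)<\infty$. Summing over $k$ and averaging yields an inequality of the form
\begin{equation*}
\eta_{N,\lambda}(t)\leq C\int_0^t\eta_{N,\lambda}(s)\dd s+C\int_0^T\frac{1}{|\gL_N|}\sum_k\bE\N{c(\tta_{k,\lambda}(s),\om_k)-c_\lambda(\tta_{k,\lambda}(s),\om_k)}^{2}\dd s,
\end{equation*}
where $C$ depends on $L$, $\N{\Gamma}_{Lip}$, $T$ and $S(\Psi)$. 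Gronwall's lemma reduces the problem to the vanishing of the residual, which follows from dominated convergence: $c_\lambda\to c$ pointwise, and the polynomial majorant $\N{c-c_\lambda}^{2}\leq C(1+\N{\tta}^{2\kappa}+\N{\om}^{2\iota})$ combined with the uniform moment bound of the previous paragraph provides the dominating function. The nonlinear analogue is treated identically, with the additional discrepancy $v_{t,\lambda}(\tta_\lambda)-v_t(\tta_\lambda)$ between the two interaction measures absorbed in the Gronwall loop exactly as in \eqref{aux:GammaTheta}--\eqref{aux:Theta1}, via the elementary bound $\delta_t(\nu,\nu_\lambda)^{p}\leq\bar\eta_\lambda(t)$ coming from the coupling.

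Finally, every $f$ in $\cC_{R,a}$ (respectively $\cC_a$) with unit norm satisfies the pointwise estimate
\begin{equation*}
\N{\cro{f}{\nu_t^{(N)}}-\cro{f}{\nu_{t,\lambda}^{(N)}}}\leq\frac{1}{|\gL_N|}\sum_k\rho(x_k-a)\N{\tta_k(t)-\tta_{k,\lambda}(t)},
\end{equation*}
with $\rho(\cdot)=\chi_R(\cdot)$ (bounded) in the $P$-nearest case and $\rho(\cdot)=\N{\cdot}^{-\alpha}$ in the power-law case. Raising to the power $p$ and applying H\"older in $k$ with conjugate exponent $q$ exactly as in~\eqref{aux:DN} --- noting that the weight factor $\bigl(\frac{1}{|\gL_N|}\sum_k\rho(x_k-a)^{q}\bigr)^{p/q}$ is bounded uniformly for $a\in\cD_K$ at each fixed $K$ by Lemma~\ref{lem:convsumRiem} and $q\alpha<d$ --- one obtains $d_R(\nu_t^{(N)},\nu_{t,\lambda}^{(N)})^{2}\leq C\,\eta_{N,\lambda}(t)$ and $d_K^{(p)}(\nu_t^{(N)},\nu_{t,\lambda}^{(N)})^{p}\leq C_K\,\eta_{N,\lambda}(t)^{p/2}$ for each $K\geq1$. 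The passage from $d_K^{(p)}$ to $d_\infty^{(p)}$ is then dominated convergence on the defining series: each summand is bounded by $1$ while the weights $2^{-K}e^{-CK^{dp/q}}K^{-2d}$ are summable. The argument for \eqref{eq:distnulamb} is strictly parallel, with $\bar\eta_\lambda$ replacing $\eta_{N,\lambda}$. The principal obstacle in the whole scheme is the dominated convergence step of the preceding paragraph: it works \emph{only} because the one-sided Lipschitz constant $L$ is preserved under Yosida regularization (whereas any modulus of continuity of $c$ would deteriorate with $\lambda$), which is precisely why all the estimates in Sections~\ref{sec:propagator}--\ref{sec:PLinfd} were tailored to depend on $L$ alone.
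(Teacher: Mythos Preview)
Your coupling set-up, the Ito/Gronwall skeleton, and the final passage from $\eta_{N,\lambda}$ (resp.\ $\bar\eta_\lambda$) to the distances $d_R$, $d_K^{(p)}$, $d_\infty^{(p)}$ are all sound and match the paper. The gap is in the step you flag yourself as ``the principal obstacle'': the dominated convergence argument for the residual
\[
\int_0^T\frac{1}{|\gL_N|}\sum_k\bE\N{c(\tta_{k,\lambda}(s),\om_k)-c_\lambda(\tta_{k,\lambda}(s),\om_k)}^{2}\dd s\;\xrightarrow[\lambda\to\infty]{}\;0.
\]
Two things go wrong. First, the integrand is $(c-c_\lambda)$ evaluated at $\tta_{k,\lambda}(s,\omega)$, which itself moves with $\lambda$; the pointwise convergence $c_\lambda(\tta)\to c(\tta)$ at a \emph{fixed} $\tta$ therefore says nothing about pointwise convergence of the integrand. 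Second, your ``dominating function'' $C(1+\N{\tta_{k,\lambda}(s)}^{2\kappa}+\N{\om_k}^{2\iota})$ depends on $\lambda$ through $\tta_{k,\lambda}$ and is not admissible for DCT. Replacing it by the $\lambda$-free bound $C(1+M(\omega)^{2\kappa}+\N{\om_k}^{2\iota})$ with $M(\omega)=\sup_\lambda\sup_s\N{\tta_{k,\lambda}(s,\omega)}$ would require $\bE M^{2\kappa}<\infty$, hence moments of order $2\kappa$ on $\zeta$ (and $2\iota$ on $\mu$), which Assumption~\ref{ass:muzeta} does not provide. For the same reason your preliminary bound $\sup_\lambda\bE\N{\tta_{k,\lambda}(t)}^{2\kappa}<\infty$ already oversteps the hypotheses: the standard Ito/Gronwall derivation of a $2\kappa$-moment needs $\int\N{\tta}^{2\kappa}\zeta(\dd\tta)<\infty$.

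The paper circumvents this by working pathwise first. Using only the dissipativity of $\tc$ (no moments), Proposition~\ref{prop:thetalambda} shows that for $\lambda<\mu$ the algebraic identity
\[
2\Big\langle \tcm(\ttam)-\tcl(\ttal),\;\tfrac{1}{\mu}\tcm(\ttam)-\tfrac{1}{\lambda}\tcl(\ttal)\Big\rangle_H
=\Big(\tfrac{1}{\mu}+\tfrac{1}{\lambda}\Big)\N{\tcm(\ttam)-\tcl(\ttal)}_H^{2}
+\Big(\tfrac{1}{\mu}-\tfrac{1}{\lambda}\Big)\big(\N{\tcm(\ttam)}_H^{2}-\N{\tcl(\ttal)}_H^{2}\big)
\]
in $H=L^2([0,T],e^{-2Cs}\dd s;\cX)$ forces $\lambda\mapsto\N{\tcl(\ttal)}_H$ to be increasing, hence (by the almost-sure bound \eqref{eq:ttalinteg}) convergent, and then $(\tcl(\ttal))_\lambda$ Cauchy in $H$. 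This yields the almost-sure uniform convergence $\sup_t\N{\ttal(t)-\tta(t)}\to 0$. The passage to $L^1$ convergence then needs only the \emph{second}-moment bound \eqref{eq:ttalmoment} (available under Assumption~\ref{ass:muzeta}) for uniform integrability. In short: your $L^2$/DCT route would work under strengthened moment hypotheses, but under the paper's assumptions the monotone-operator Cauchy argument is what closes the loop.
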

The rest of this section is devoted to prove Proposition~\ref
{prop:distancelambda}. Let us begin with some a priori estimates:
%
%
\begin{lemma}
\label{lem:apriori}
We have the following a priori estimates:
%
%
\begin{equation}
\label{eq:ttalmoment} \sup_{\lambda>0} \bE \Bigl(\sup
_{t\in[0, T]} \bigl\| \tta_{\lambda} (t) \bigr\|^{2} \Bigr)<
\infty
\end{equation}
and
%
%
\begin{equation}
\label{eq:ttalinteg} \bP \biggl(\sup_{\lambda>0} \int
_{0}^{T} \bigl\| \tc_{\lambda}\bigl(\tta
_{\lambda}(s)\bigr)\bigr \|^{2} \,\mathrm{d} s<\infty \biggr)=1.
\end{equation}
\end{lemma}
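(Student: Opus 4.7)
The plan is to apply It\^o's formula to $\N{\ttal(t)}^{2}$ and exploit the dissipativity that the Yosida approximation inherits from $\tc$. Writing $J_{\lambda}(\tta):=R_{\lambda}(\lambda\tta)$ for the resolvent, we have $\tcl(\tta)=\tc(J_{\lambda}\tta)$ and $J_{\lambda}\tta-\tta=\tcl(\tta)/\lambda$ from \eqref{eq:yosida}. The preliminary observation is that from $J_{\lambda}\tta-J_{\lambda}\bar\tta=(\tta-\bar\tta)+\lambda^{-1}(\tcl(\tta)-\tcl(\bar\tta))$ and \eqref{eq:dissipative}, for fixed $\om$,
\begin{equation*}
\cro{\tta-\bar\tta}{\tcl(\tta)-\tcl(\bar\tta)}\leq -\lambda^{-1}\N{\tcl(\tta)-\tcl(\bar\tta)}^{2}\leq 0,
\end{equation*}
so $\tcl(\cdot,\om)$ is itself dissipative (giving $\cro{\ttal}{\tcl(\ttal)}\leq\cro{\ttal}{\tcl(0,\om)}$) and $J_{\lambda}$ is $1$-Lipschitz. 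Moreover $\tcl(0,\om)$ is bounded \emph{uniformly in $\lambda$}: from $J_{\lambda}0=\tcl(0)/\lambda$ and \eqref{eq:dissipative} applied at $(J_{\lambda}0,0)$, one obtains $\lambda\N{J_{\lambda}0}^{2}\leq\N{J_{\lambda}0}\N{\tc(0,\om)}$, hence $\N{\tcl(0,\om)}=\lambda\N{J_{\lambda}0}\leq\N{c(0,\om)}\leq\Nn{c}(1+\N{\om}^{\iota})$ by \eqref{eq:polgrowthc}.

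For the first estimate \eqref{eq:ttalmoment}, apply It\^o's formula to $\N{\ttal(t)}^{2}$ and combine the dissipativity bound above with the linear-growth estimate $\N{\tv(t,\tta,\om,x)}\leq L\N{\tta}+\Ninf{\Gamma}S(\Psi)$, which follows from the boundedness of $\Gamma$ and the finiteness of $S(\Psi)$ from \eqref{eq:SPsi} under Assumption~\ref{ass:psi}. Using Young's inequality to absorb the products against $\ttal$ yields
\begin{equation*}
\N{\ttal(t)}^{2}\leq\N{\ttal(0)}^{2}+C\int_{0}^{t}\bigl(1+\N{\ttal(s)}^{2}+\N{\om}^{2\iota}\bigr)\dd s+M_{\lambda}(t)+Ct,
\end{equation*}
where $M_{\lambda}(t)=2\int_{0}^{t}\cro{\ttal(s)}{\sig\dd B(s)}$ satisfies $\langle M_{\lambda}\rangle_{t}\leq C\int_{0}^{t}\N{\ttal(s)}^{2}\dd s$. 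Taking $\sup_{t\in[0,T]}$ and expectation, applying Burkholder-Davis-Gundy to $M_{\lambda}$, absorbing the martingale via Young's inequality, and closing by Gronwall's lemma yields $\bE\sup_{t\in[0,T]}\N{\ttal(t)}^{2}\leq C_{T}$ with a constant independent of $\lambda$, thanks to \eqref{eq:asszeta}--\eqref{eq:assmu}.

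For the second estimate \eqref{eq:ttalinteg}, combining the $1$-Lipschitz continuity of $J_{\lambda}$ with $\N{J_{\lambda}0}\leq\Nn{c}(1+\N{\om}^{\iota})/\lambda$ gives $\N{J_{\lambda}\ttal(s)}\leq\N{\ttal(s)}+C(1+\N{\om}^{\iota})$, and inserting into the polynomial growth \eqref{eq:polgrowthc} yields
\begin{equation*}
\N{\tcl(\ttal(s))}^{2}=\N{\tc(J_{\lambda}\ttal(s),\om)}^{2}\leq C\bigl(1+\N{\ttal(s)}^{2\kappa}+\N{\om}^{2\iota}\bigr)
\end{equation*}
uniformly in $\lambda\geq 1$, hence $\int_{0}^{T}\N{\tcl(\ttal(s))}^{2}\dd s\leq CT\bigl(1+\sup_{s\in[0,T]}\N{\ttal(s)}^{2\kappa}+\N{\om}^{2\iota}\bigr)$. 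The main obstacle is to upgrade Estimate~1 to a \emph{pathwise} uniform-in-$\lambda$ bound on $\sup_{t}\N{\ttal(t)}$, which is strictly stronger than the uniform moment bound. I would first extend the It\^o--Gronwall--BDG argument to the higher moment $2\kappa$, obtaining $\sup_{\lambda}\bE\sup_{t}\N{\ttal(t)}^{2\kappa}<\infty$, and then invoke the almost-sure uniform-on-$[0,T]$ Cauchy property of the family $(\ttal)_{\lambda\geq 1}$ driven by the same Brownian motion, a standard consequence of applying \eqref{eq:dissipative} to the difference $\ttal-\ttam$: this ensures $\sup_{\lambda\geq 1}\sup_{t\in[0,T]}\N{\ttal(t,\omega)}<\infty$ for almost every $\omega$, from which \eqref{eq:ttalinteg} follows.
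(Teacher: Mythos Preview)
Your argument for \eqref{eq:ttalmoment} is correct and matches the paper's: It\^o's formula, the dissipativity of $\tcl$ (together with the uniform bound $\N{\tcl(0,\om)}\leq\N{\tc(0,\om)}$, which you justify carefully), BDG and Gronwall. Nothing to add there.

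For \eqref{eq:ttalinteg} there is a genuine gap. You correctly reduce the problem to showing that $\sup_{\lambda\geq 1}\sup_{t\in[0,T]}\N{\ttal(t)}<\infty$ almost surely, but neither of your two proposed routes closes. The higher-moment estimate $\sup_{\lambda}\bE\sup_{t}\N{\ttal(t)}^{2\kappa}<\infty$ controls each $\lambda$ in expectation, yet gives no almost-sure control of the supremum over the \emph{uncountable} family $(\ttal)_{\lambda\geq 1}$; a uniform moment bound simply does not imply a pathwise uniform bound. The second route, invoking the almost-sure Cauchy property of $(\ttal)_{\lambda}$, is circular: in the paper this Cauchy property is the content of Proposition~\ref{prop:thetalambda}, and its proof (via the monotonicity estimate \eqref{est:7}) hinges precisely on knowing that $\sup_{\lambda}\N{\tcl(\ttal)}_{H}^{2}<\infty$, i.e.\ on \eqref{eq:ttalinteg} itself. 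Applying dissipativity directly to $\ttal-\ttam$ does not avoid this, because $\tcl$ and $\tcm$ are different functions and the cross term $\cro{\ttal-\ttam}{\tcl(\ttam)-\tcm(\ttam)}$ cannot be controlled without quantitative information on $\tcl(\ttal)$.

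The paper resolves this with a trick you are missing: set $Y_{\lambda}(t):=\ttal(t)-\sig\cdot B(t)$. Then $Y_{\lambda}$ solves, \emph{pathwise}, the random ODE $\dot Y_{\lambda}=\tcl(Y_{\lambda}+\sig B)+\tv(\cdot,Y_{\lambda}+\sig B,\om,x)$, so one can run a Gronwall argument \emph{$\omega$ by $\omega$}. Using dissipativity in the form $\cro{Y_{\lambda}}{\tcl(Y_{\lambda}+\sig B)}\leq\cro{Y_{\lambda}}{\tcl(\sig B)}$ together with the pointwise bound $\N{\tcl(\sig B(s))}\leq\N{\tc(\sig B(s))}$ (finite a.s.\ since $\tc$ is locally bounded and $B$ has continuous paths), one obtains $\sup_{\lambda}\sup_{t\leq T}\N{Y_{\lambda}(t)}<\infty$ almost surely, hence the same for $\ttal$, and then your polynomial-growth bound finishes the job.
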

\begin{pf}
Let us first prove the first estimate \eqref{eq:ttalmoment}: applying
It\^o's formula,
\begin{eqnarray*}
\bigl\| \tta_{\lambda}(t) \bigr\|^{2}&=&\bigl\| \tta_{\lambda}(0)
\bigr\|^{2} + 2\int_{0}^{t} \bigl\langle
\tta_{\lambda} (s), \tc_{\lambda} \bigl(\tta_{\lambda
}(s)\bigr)
+ \tv\bigl(s, \tta_{\lambda}(s), \omega, x\bigr)\bigr\rangle \,\mathrm{d}s
\\
&&{}+ 2\int_{0}^{t}\bigl\langle
\tta_{\lambda}(s), \mathrm{d}B(s)\bigr\rangle + \operatorname{tr}\bigl(\sig
\sig^{T}\bigr)t
\\
&\leq&\bigl\| \tta_{\lambda}(0) \bigr\|^{2} + 2 \bigl(\bigl\| \tc(0)\bigr \|
 + L+ \|
\Gamma \|_{\infty}S(\Psi ) \bigr)\int_{0}^{t}
\bigl\| \tta_{\lambda}(s) \bigr\|^{2} \,\mathrm{d}s
\\
&&{}+ 2\int_{0}^{t}\bigl\langle
\tta_{\lambda} (s), \,\mathrm{d}B(s)\bigr\rangle +\operatorname{tr}\bigl(\sig
\sig^{T}\bigr)T.
\end{eqnarray*}
Taking expectations and using the Burkholder--Davis--Gundy inequality,
we obtain that for some constant $C>0$ (independent of $\lambda$),
\begin{eqnarray*}
\bE \Bigl(\sup_{s\leq t}\bigl\| \tta_{\lambda}(s)
\bigr\|^{2} \Bigr)&\leq& \bE \bigl(\bigl\| \tta (0) \bigr\|^{2} \bigr) + \operatorname{tr}
\bigl(\sig\sig^{T}\bigr)T +2C \int_{0}^{t}
\bE \Bigl(\sup_{u\leq s}\bigl\| \tta_{\lambda}(u)
\bigr\|^{2} \Bigr)\,\mathrm{d}s
\\
&&{} + 6 \operatorname{tr}\bigl(\sig\sig^{T}\bigr)^{1/2} \bE \biggl( \biggl(
\int_{0}^{t} \bigl\| \tta_{\lambda}(u)
\bigr\|^{2}\,\mathrm {d}u \biggr)^{
{1}/{2}} \biggr)
\\
&\leq&\bE \bigl(\bigl\| \tta(0) \bigr\|^{2} \bigr) + \operatorname{tr}\bigl(\sig
\sig^{T}\bigr)T +2C \int_{0}^{t} \bE
\Bigl(\sup_{u\leq s}\bigl\| \tta_{\lambda}(u) \bigr\|^{2}
\Bigr)\,\mathrm{d}s
\\
&&{} + 18 \operatorname{tr}\bigl(\sig\sig^{T}\bigr)T + \frac{1}{2}\bE \Bigl( \sup
_{u\leq t}\bigl\| \tta _{\lambda} (u)\bigr \|^{2} \Bigr),
\end{eqnarray*}
which implies
\[
\bE \Bigl(\sup_{s\leq t}\bigl\| \tta_{\lambda}(s)
\bigr\|^{2} \Bigr)\leq2 \bigl(\bE \bigl(\bigl\| \tta(0)\bigr \|^{2} \bigr)
+ 19 \operatorname{tr}\bigl(\sig\sig^{T}\bigr)T \bigr) +4C \int_{0}^{t}
\bE \Bigl(\sup_{u\leq s}\bigl\| \tta_{\lambda}(u)
\bigr\|^{2} \Bigr)\,\mathrm{d}s,
\]
and Gronwall's lemma leads to the result.

Let us now turn to the second estimate \eqref{eq:ttalinteg}: define
$Y_{\lambda}
(t):=\tta_{\lambda}(t) - \sig\cdot B(t)$. Then $Y_{\lambda}$ satisfies
%
%
\begin{equation}
\mathrm{d}Y_{\lambda}(t)= \bigl(\tc_{\lambda}\bigl(Y_{\lambda
}(t)+B(t),
\omega\bigr) + \tv\bigl(t, Y_{\lambda} (t)+B(t), \omega, x\bigr) \bigr)\,
\mathrm{d}t.
\end{equation}
Clearly,
\begin{eqnarray*}
&&\bigl\| Y_{\lambda}(t) \bigr\|^{2}\\
&&\quad= \bigl\| Y_{\lambda}(0)
\bigr\|^{2} + 2\int_{0}^{t} \bigl\langle
Y_{\lambda} (s), \tc_{\lambda}\bigl(Y_{\lambda} (s)+\sig\cdot
B(s)\bigr)\bigr\rangle \,\mathrm{d}s
\\
&&\qquad{}+ 2\int_{0}^{t} \bigl\langle
Y_{\lambda}(s), \tv\bigl(s, Y_{\lambda} (s)+\sig\cdot B(s)\bigr),
\omega, x\bigr\rangle \,\mathrm {d}s
\\
&&\quad\leq\bigl\| Y_{\lambda}(0) \bigr\|^{2} + 2 \bigl(\bigl\| \tc(0) \bigr\| + L+
\|
\Gamma \|_{\infty}S(\Psi ) \bigr)\int_{0}^{t}
\bigl\| Y_{\lambda}(s) \bigr\|^{2}\,\mathrm{d}s
\\
&&\qquad{}+ 2\int_{0}^{t} \bigl\langle Y_{\lambda}
(s), \tc_{\lambda}\bigl(\sig\cdot B(s)\bigr)\bigr\rangle \,\mathrm{d}s
\\
&&\quad\leq\bigl\| Y_{\lambda}(0) \bigr\|^{2} \\
&&\qquad{}+ 2 \biggl(\bigl\| \tc(0) \bigr\| + L+ \|
\Gamma \|_{\infty}S(\Psi) + \int_{0}^{t}
\bigl\| \tc_{\lambda}\bigl(\sig\cdot B(s)\bigr) \bigr\|^{2}\,\mathrm {d}s
\biggr)\int_{0}^{t} \bigl\| Y_{\lambda} (s)
\bigr\|^{2}\,\mathrm{d}s,
\end{eqnarray*}
taking the supremum in $\lambda$ and using $Y_{\lambda}(0)=\tta
_{\lambda}(0)=\tta
(0)$, we have
\begin{eqnarray*}
\sup_{\lambda}\bigl\| Y_{\lambda}(t) \bigr\|^{2}&\leq&\bigl\|
\tta(0) \bigr\|^{2} + 2 \biggl(C + \int_{0}^{t}
\bigl\| \tc_{\lambda}\bigl(\sig\cdot B(s)\bigr) \bigr\|^{2}\,\mathrm{d}s
\biggr)\int_{0}^{t} \sup_{\lambda
}
\bigl\| Y_{\lambda}(s) \bigr\|^{2}\,\mathrm{d}s
\\
&\leq&\bigl\| \tta(0) \bigr\|^{2} + 2 \biggl(C + \int_{0}^{t}
\bigl\| \tc\bigl(\sig \cdot B(s)\bigr) \bigr\|^{2}\,\mathrm{d}s \biggr)\int
_{0}^{t} \sup_{\lambda}\bigl\|
Y_{\lambda} (s) \bigr\|^{2}\,\mathrm{d}s,
\end{eqnarray*}
where we used the pointwise estimate $\| \tc_{\lambda}(\tta) \|\leq
\| \tc (\tta) \|$.
Gronwall's lemma gives
\[
\sup_{\lambda}\bigl\| Y_{\lambda}(t) \bigr\|^{2} \leq\bigl\|
\tta(0) \bigr\|^{2} \exp \biggl(2 \biggl(C + \int_{0}^{T}
\bigl\| \tc\bigl(\sig\cdot B(s)\bigr) \bigr\|^{2}\,\mathrm{d}s \biggr)T \biggr)
\]
that is almost surely finite, since $\tc$ is locally bounded, and the
trajectories of $B$ are almost surely bounded. Consequently,
\[
\sup_{\lambda}\sup_{t\leq T} \bigl\|
\tta_{\lambda}(t)\bigr \|^{2}\leq\sup_{\lambda
}\sup
_{t\leq T}\bigl\| Y_{\lambda}(t) \bigr\|^{2} + \sup
_{t\leq T}\bigl\| B(t) \bigr\| ^{2}<\infty\qquad \mbox{a.s.}
\]
Since $\tc$ is polynomially bounded, this implies now that
\[
\sup_{\lambda} \int_{0}^{T} \bigl\|
\tc_{\lambda}\bigl(\tta_{\lambda}(t)\bigr) \bigr\|^{2}\,
\mathrm{d}t<\infty \qquad\mbox{a.s.},
\]
which is the result.
\end{pf}
The key estimate of this section is the following:
%
%
\begin{proposition}
\label{prop:thetalambda}
Almost surely, the following holds:
%
%
\begin{equation}
\label{eq:supthetalambda} \limsup_{\lambda\to\infty} \sup_{t\in[0, T]}
\bigl\| \tta(t)- \tta _{\lambda} (t) \bigr\| =0.
\end{equation}
\end{proposition}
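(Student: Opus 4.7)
My plan is to apply Ito's formula to $\N{\tta(t)-\ttal(t)}^{2}$: since both processes share the same initial condition and Brownian motion, the stochastic integral cancels and only the drift difference remains. The goal is to produce a Gronwall-type inequality whose right-hand side is of order $1/\gl$ times an a.s.\ finite random quantity, uniformly in~$\gl$. The key identity, which follows at once from the definition $\Rl=(\gl-\tc)^{-1}$, is
\[
\Rl(\gl\ttal)=\ttal+\frac{1}{\gl}\tcl(\ttal),
\]
showing that $\Rl(\gl\ttal)$ differs from $\ttal$ by a term of size $\gl^{-1}\N{\tcl(\ttal)}$, itself dominated by $\gl^{-1}\N{\tc(\ttal)}$ via the standard pointwise estimate $\N{\tcl}\leq\N{\tc}$.

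The main step is to split the critical cross-term as
\[
\cro{\tta-\ttal}{\tc(\tta)-\tcl(\ttal)}=\cro{\tta-\Rl(\gl\ttal)}{\tc(\tta)-\tc(\Rl(\gl\ttal))}+\frac{1}{\gl}\cro{\tcl(\ttal)}{\tc(\tta)-\tcl(\ttal)}.
\]
By the dissipativity \eqref{eq:dissipative} of $\tc$, the first bracket is non-positive. The second is controlled, via Cauchy-Schwarz and $\N{\tcl}\leq\N{\tc}$, by $\frac{1}{2\gl}\bigl(\N{\tc(\tta)}^{2}+\N{\tc(\ttal)}^{2}\bigr)$. For the interaction drift, the global Lipschitz-continuity of $\tv$ in $\tta$ (with constant $L+\N{\Gamma}_{Lip}S(\Psi)$, inherited from the Lipschitz property of $\Gamma$ in Assumption~\ref{ass:Gammac}) gives $\cro{\tta-\ttal}{\tv(s,\tta,\om,x)-\tv(s,\ttal,\om,x)}\leq C\N{\tta-\ttal}^{2}$ in the usual way.

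Putting these estimates together and applying Gronwall's lemma then yields
\[
\sup_{t\leq T}\N{\tta(t)-\ttal(t)}^{2}\leq \frac{e^{CT}}{\gl}\int_{0}^{T}\left(\N{\tc(\tta(s))}^{2}+\N{\tc(\ttal(s))}^{2}\right)\dd s.
\]
By the second part of Lemma~\ref{lem:apriori}, together with its analogue for the limit process $\tta$ (obtained by the same argument, using the a.s.\ boundedness of trajectories and the polynomial growth \eqref{eq:polgrowthc} of $\tc$), the integral on the right-hand side is almost surely finite and bounded uniformly in~$\gl$. Letting $\gl\to\infty$ then gives \eqref{eq:supthetalambda}.

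The main obstacle I anticipate is precisely the absence of global Lipschitz-continuity of $c$, which rules out a direct Gronwall estimate on the difference $\tc(\tta)-\tcl(\ttal)$. The resolvent identity combined with the dissipative structure is the specific device that bypasses this: it converts an a priori unbounded drift difference into an explicit $O(1/\gl)$ correction multiplied by an a.s.\ integrable random factor, whose control is then exactly what Lemma~\ref{lem:apriori} was designed to provide.
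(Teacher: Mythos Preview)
Your argument is correct and in fact more direct than the paper's. The paper does not compare $\tta$ and $\ttal$ directly; instead it fixes $\gl<\mu$, estimates $\N{\ttam-\ttal}$ via the same resolvent identity you use, and shows that $(\tcl(\ttal))_{\gl}$ is Cauchy in $L^{2}([0,T],e^{-2Cs}\dd s;\cX)$ by exploiting the monotonicity of $\gl\mapsto\N{\tcl(\ttal)}_{H}^{2}$. From this it deduces that $\ttal$ converges uniformly a.s.\ to some $\bar\tta$, and only then identifies $\bar\tta=\tta$ by passing to the limit in the equation and invoking uniqueness. Your approach instead plugs the limit process $\tta$ directly into the comparison, uses the splitting $\tta-\ttal=(\tta-\Rl(\gl\ttal))+\frac{1}{\gl}\tcl(\ttal)$ so that dissipativity kills the dangerous part, and extracts an explicit rate $\sup_{t\leq T}\N{\tta(t)-\ttal(t)}^{2}=O(1/\gl)$ via Gronwall. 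This is shorter, avoids the Cauchy-plus-identification detour, and yields quantitative information the paper's proof does not. The only extra ingredient you need is the a.s.\ finiteness of $\int_{0}^{T}\N{\tc(\tta(s))}^{2}\dd s$ and of $\sup_{\gl}\int_{0}^{T}\N{\tc(\ttal(s))}^{2}\dd s$; as you correctly note, both follow from the proof of Lemma~\ref{lem:apriori}, which actually establishes the stronger statement $\sup_{\gl}\sup_{t\leq T}\N{\ttal(t)}<\infty$ a.s.\ (combined with the polynomial bound~\eqref{eq:polgrowthc}). The paper's Cauchy approach has the abstract advantage of not requiring the limit $\tta$ to be constructed beforehand, but since $\tta$ is already given here, your route is preferable.
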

\begin{pf}
Let us fix $\lambda<\mu$. Since the Brownian motion is the same, one
has successively [for a constant $C= L + \| \Gamma \|_{\mathrm
{Lip}}S(\Psi)$]
\begin{eqnarray*}
&&\frac{\mathrm{d}}{\mathrm{d}t} e^{-2Ct} \bigl\| \tta_{\mu}(t)-\tta
_{\lambda}(t) \bigr\|^{2}\\
&&\qquad= -2C e^{-2Ct} \bigl\|
\tta_{\mu}(t)-\tta_{\lambda}(t) \bigr\|^{2}
\\
&&\qquad\quad{}+ 2e^{-2Ct} \bigl\langle\tta_{\mu}(t)-\tta_{\lambda}
(t), \tc_{\mu} \bigl(\tta_{\mu
}(t)\bigr)-
\tc_{\lambda}\bigl(\tta_{\lambda}(t)\bigr)\bigr\rangle
\\
&&\qquad\quad{}+ 2e^{-2Ct} \bigl\langle\tta_{\mu}(t)-\tta_{\lambda}(t)
, \tv\bigl(t, \tta _{\mu}(t), \omega, x\bigr)- \tv\bigl(t,
\tta_{\lambda}(t), \omega, x\bigr)\bigr\rangle
\\
&&\qquad\leq-2C e^{-2Ct}\bigl \| \tta_{\mu}(t)-\tta_{\lambda}(t)
\bigr\|^{2}
\\
&&\qquad\quad{}+ 2e^{-2Ct} \bigl\langle\tta_{\mu} (t)-\tta_{\lambda}(t)
, \tc_{\mu
}\bigl(\tta_{\mu}(t)\bigr)- \tc_{\lambda}
\bigl(\tta_{\lambda}(t)\bigr)\bigr\rangle
\\
&&\quad\qquad{}+ 2e^{-2Ct} \bigl(L + \| \Gamma \|_{\mathrm{Lip}}S(\Psi) \bigr) \bigl\|
\tta_{\mu} (t)-\tta_{\lambda} (t) \bigr\|^{2}
\\
&&\qquad\leq2e^{-2Ct} \bigl\langle\tta_{\mu}(t)-
\tta_{\lambda}(t), \tc_{\mu
}\bigl(\tta_{\mu}(t)\bigr)-
\tc_{\lambda} \bigl(\tta_{\lambda} (t)\bigr)\bigr\rangle
\\
&&\qquad= 2e^{-2Ct} \biggl\langle \biggl(R_{\mu}\bigl(\mu
\tta_{\mu}(t)\bigr) - \frac
{1}{\mu} \tc\bigl(R_{\mu}
\bigl(\mu\tta_{\mu}(t)\bigr)\bigr) \biggr)
\\
&&\hspace*{36pt}\qquad\quad{}- \biggl(R_{\lambda}\bigl(\lambda\tta _{\lambda}(t)\bigr) -
\frac
{1}{\lambda} \tc \bigl(R_{\lambda}\bigl(\lambda\tta_{\lambda}(t)
\bigr)\bigr) \biggr),\\
&&\hspace*{90pt}\tc\bigl(R_{\mu}\bigl(\mu \tta_{\mu}(t)
\bigr)\bigr)- \tc\bigl(R_{\lambda} \bigl(\lambda\tta_{\lambda} (t)
\bigr)\bigr) \biggr\rangle
\\
&&\qquad\leq-2e^{-2Ct} \biggl\langle\frac{1}{\mu} \tc_{\mu}
\bigl(\tta_{\mu
}(t)\bigr)-\frac {1}{\lambda} \tc_{\lambda} \bigl(
\tta_{\lambda}(t)\bigr), \tc _{\mu}\bigl(\tta_{\mu}(t)
\bigr)- \tc_{\lambda}\bigl(\tta_{\lambda}(t)\bigr)\biggr\rangle.
\end{eqnarray*}
Integrating this inequality gives (since the initial condition is the same)
\begin{eqnarray*}
&&\frac{1}{2} e^{-2CT} \bigl\| (\tta_{\mu}-
\tta_{\lambda}) (T) \bigr\|^{2}\\
&&\qquad\leq - \int_{0}^{T}e^{-2Ct}
\biggl\langle\frac{1}{\mu} \tc_{\mu}\bigl(\tta_{\mu
}(t)
\bigr)-\frac {1}{\lambda} \tc_{\lambda} \bigl(\tta_{\lambda}(t)\bigr)
, \tc _{\mu}\bigl(\tta_{\mu}(t)\bigr)- \tc_{\lambda}
\bigl(\tta_{\lambda}(t)\bigr)\biggr\rangle \,\mathrm{d}t.
\end{eqnarray*}
This gives in particular that
\[
\int_{0}^{T}e^{-2Ct} \biggl\langle
\frac{1}{\mu} \tc_{\mu}\bigl(\tta_{\mu
}(t)\bigr)-
\frac {1}{\lambda} \tc_{\lambda}\bigl(\tta_{\lambda}(t)\bigr), \tc
_{\mu}\bigl(\tta_{\mu}(t)\bigr)- \tc_{\lambda}\bigl(
\tta_{\lambda}(t)\bigr)\biggr\rangle \,\mathrm{d}t\leq0.
\]
Let us denote as $\| \cdot \|_{H}$ the Hilbert norm in $H:= L^{2}([0, T],
e^{-2Cs}\,\mathrm{d}s; \cX)$. Then, from the identity
\begin{eqnarray*}
&&2 \biggl\langle\tc_{\mu}(\tta_{\mu})- \tc_{\lambda}(
\tta_{\lambda}), \frac{1}{\mu} \tc_{\mu}(
\tta_{\mu} )-\frac{1}{\lambda } \tc _{\lambda}(
\tta_{\lambda})\biggr\rangle_{H}\\
&&\qquad= \biggl(\frac{1}{\mu} +
\frac{1}{\lambda
} \biggr)\bigl \| \tc_{\mu} (\tta_{\mu})-
\tc_{\lambda}(\tta_{\lambda
}) \bigr\|_{H}^{2}
\\
&&\qquad\quad{}+ \biggl(\frac{1}{\mu} - \frac
{1}{\lambda
} \biggr) \bigl(\bigl\|
\tc_{\mu}(\tta_{\mu}) \bigr\|_{H}^{2} - \bigl\|
\tc _{\lambda}(\tta_{\lambda} ) \bigr\|_{H}^{2}
\bigr),
\end{eqnarray*}
one obtains that
%
%
\begin{equation}
\label{est:7} \qquad\quad\biggl(\frac{1}{\mu} + \frac{1}{\lambda} \biggr) \bigl\|
\tc_{\mu
}(\tta_{\mu})- \tc_{\lambda}(
\tta_{\lambda} ) \bigr\|_{H}^{2}\leq \biggl(
\frac{1}{\lambda} - \frac{1}{\mu} \biggr) \bigl(\bigl\| \tc_{\mu} (
\tta_{\mu}) \bigr\|_{H}^{2} - \bigl\| \tc_{\lambda
}(
\tta_{\lambda}) \bigr\|_{H}^{2} \bigr),
\end{equation}
which gives in particular that $\lambda\mapsto\| \tc_{\lambda}(\tta
_{\lambda} ) \|_{H}^{2}$ is
increasing and by \eqref{eq:ttalinteg} is bounded and thus convergent.
The same inequality \eqref{est:7} shows also that $\| \tc_{\mu}(\tta
_{\mu})- \tc_{\lambda} (\tta_{\lambda}) \|_{H}^{2} \to_{\lambda,
\mu\to\infty} 0$, so that $(\tc_{\lambda}
(\tta_{\lambda})(t))$
converges in $H$ to some~$c_{\infty}(t)$.

Going back to the first inequality of the proof, one has
\begin{eqnarray*}
&&\frac{1}{2} \sup_{t\in[0, T]}e^{-2Ct} \bigl\|
\tta_{\mu}(t)-\tta _{\lambda} (t)\bigr \|^{2}\\
&&\qquad\leq \int
_{0}^{T} e^{-2Ct} \bigl\langle
\tta_{\mu}(t)-\tta_{\lambda}(t), \tc_{\mu}\bigl(
\tta_{\mu}(t)\bigr)- \tc_{\lambda}\bigl(\tta_{\lambda} (t)
\bigr)\bigr\rangle\,\mathrm{d}t
\\
&&\qquad\leq\frac{1}{4T} \int_{0}^{T}
e^{-2Ct} \bigl\| \tta_{\mu}(t)-\tta _{\lambda} (t)
\bigr\|^{2}\,\mathrm{d} t
\\
&&\qquad\quad{}+ T \int_{0}^{T} e^{-2Ct} \bigl\|
\tc_{\mu}\bigl(\tta_{\mu}(t)\bigr)- \tc _{\lambda}\bigl(
\tta_{\lambda} (t)\bigr) \bigr\|^{2}\,\mathrm{d}t
\\
&&\qquad\leq\frac{1}{4} \sup_{t\in[0, T]} e^{-2Ct} \bigl\|
\tta_{\mu}(t)-\tta _{\lambda} (t) \bigr\|^{2}
\\
&&\qquad\quad{}+ T \int_{0}^{T} e^{-2Ct} \bigl\|
\tc_{\mu
}\bigl(\tta_{\mu}(t)\bigr)- \tc_{\lambda} \bigl(
\tta_{\lambda} (t)\bigr) \bigr\| ^{2}\,\mathrm{d}t.
\end{eqnarray*}
Hence
\[
\sup_{t\in[0, T]}e^{-2Ct} \bigl\| \tta_{\mu}(t)-
\tta_{\lambda}(t) \bigr\| ^{2} \leq4T \int_{0}^{T}
e^{-2Ct} \bigl\| \tc_{\mu}\bigl(\tta_{\mu}(t)\bigr)-
\tc_{\lambda}\bigl(\tta _{\lambda}(t)\bigr)\bigr \|^{2}\,
\mathrm{d}t,
\]
which goes to $0$ as $\lambda, \mu\to\infty$. This implies that there
exists an adapted process $\bar\tta$ with continuous trajectories such
that $\lim_{\lambda\to\infty} \tta_{\lambda}= \bar\tta$,
uniformly and almost
surely. Clearly, for all $t$, the strong continuity $\lim_{\lambda\to
\infty
} R_{\lambda}(\lambda\bar\tta(t))= \bar\tta(t)$ of the resolvent
and the
uniform Lipschitz continuity $\| R_{\lambda}(\lambda\tta_{\lambda
}(t)) - R_{\lambda} (\lambda\bar\tta (t)) \|\leq\| \tta_{\lambda
}(t)-\tta(t) \|$ implies that $\lim_{\lambda}R_{\lambda}
(\lambda\tta_{\lambda}
(t))=\bar\tta(t)$. Finally, continuity of $\tc$ gives $\lim_{\lambda\to
\infty}\tc_{\lambda}(\tta_{\lambda}(t))= \tc(R_{\lambda}(\lambda
\tta_{\lambda}(t)))=\tc(\bar\tta(t))$.
Consequently, we have that, almost surely $\tc(\bar\tta
_{t})=c_{\infty
}(t)$, so that $\bar\tta$ solves equation \eqref{eq:odemeanfieldtilde},
so that by uniqueness $\bar\tta=\tta$ almost surely.
\end{pf}

We are now in position to prove Proposition~\ref{prop:distancelambda}:
\begin{pf*}{Proof of Proposition~\ref{prop:distancelambda}}
We only prove \eqref{eq:distnulamb}, the proof of \eqref
{eq:distnulambN} follows from analogous estimates with the microscopic
equation \eqref{eq:odegene}.
We only treat the (more complicated) case of the power-law interaction.
Fix any $f$ in $\cC_{a}$ for some $a$ with $\| f \|_{a}\leq1$. Then, by
Lispchitz continuity of $f$ in the variable $\tta$
\begin{eqnarray*}
\bigl\llvert \langle f, \nu_{t, \lambda}\rangle - \langle f, \nu
_{t}\rangle\bigr\rrvert &\leq S(\Psi) \bE_{B}\bigl \|
\tta_{\lambda}(t) - \tta(t) \bigr\|.
\end{eqnarray*}
Taking the supremum in $f$ and in $t$ leads to
\[
\sup_{t\in[0, T]} d (\nu_{t, \lambda}, \nu_{t} )\leq
S(\Psi) \bE _{B} \sup_{t\in[0, T]}\bigl\|
\tta_{\lambda}(t) - \tta(t) \bigr\|.
\]
By \eqref{eq:supthetalambda} we have the almost sure convergence to $0$
of $\sup_{t\in[0, T]}\| \tta_{\lambda}(t) - \tta(t) \|$ and \eqref
{eq:ttalmoment}
gives the boundedness in $L^{2}$ implying uniform integrability. The
result follows.
\end{pf*}

\begin{appendix}\label{app}
\section*{Appendix: Proof of a technical lemma}

\begin{pf*}{Proof of Lemma~\ref{lem:convsumRiem}}
Let us proceed by induction on the dimension $d$. Let us fix $d=1$:
\begin{itemize}
\item
Let us begin with the case where $a\notin\cD_{N}$: let $J$ be the
integer such that $ \frac{J}{2N}< a< \frac{J+1}{2N}$. Then an easy
comparison with integrals shows the following:
\begin{eqnarray*}
&&\sum_{j} \biggl\llvert \frac{j}{2N} -a
\biggr\rrvert ^{-\beta} \\[-2pt ]
&&\qquad\leq 2^{\beta
}N^{\beta} \biggl( \int
_{0}^{J} \llvert 2aN - t \rrvert ^{-\beta}
\,\mathrm{d}t + \llvert 2aN- J \rrvert ^{-\beta}\\[-2pt ]
&&\hspace*{30pt}\qquad\quad{} + \bigl\llvert 2aN- (J+1)
\bigr\rrvert ^{-\beta}+ \int_{J+1}^{N} \llvert t- 2aN \rrvert
^{-\beta} \,\mathrm{d}t \biggr)
\\[-2pt ]
&&\qquad=2^{\beta}N^{\beta} \int_{0}^{J}
\llvert 2aN - t \rrvert ^{-\beta
} \,\mathrm{d}t + 2^{\beta}N^{\beta}
\int_{J+1}^{N} \llvert t- aN \rrvert
^{-\beta} \,\mathrm{d}t
\\[-2pt ]
&&\qquad\quad{}+ \biggl\llvert a- \frac{J}{2N} \biggr\rrvert ^{-\beta} + \biggl
\llvert a- \frac{J+1}{2N} \biggr\rrvert ^{-\beta}.
\end{eqnarray*}
It is straightforward to see that the two first integral terms are
smaller than $ \frac{N}{d-\beta}$ whereas each of the two remaining
terms is smaller than $\rho(N, K)^{-\beta}$, where $\rho(N, K):=
\inf_{\vert j\vert\leq N , \vert l\vert\leq
K, j/N\neq l/K} \llvert \frac{j}{2N} - \frac{l}{2K}\rrvert = \frac{\gcd(K,
N)}{2KN}\geq\frac{1}{2KN}$. Consequently, since $K\geq1$ and $\beta<1$,
\[
\sum_{j} \biggl\llvert \frac{j}{N} -a
\biggr\rrvert ^{-\beta} \leq \frac
{2N}{d-\beta} + 2 K^{\beta}
N^{\beta}\leq C_{0}NK.
\]
\item
The case where $a\in\cD_{N}$ is easier: in this case, $a= \frac{k}{2N}$
for some $k$. Then, once again by comparison with integrals,
\begin{eqnarray*}
\sum_{j;j/N\neq a} \biggl\llvert \frac{j}{2N} -a
\biggr\rrvert ^{-\beta}&=& 2^{\beta}N^{\beta}\sum
_{j\neq k} \llvert j -k \rrvert ^{-\beta
}\\[-2pt ]
&\leq&
\frac{N^{\beta}}{1-\beta} \bigl((N+k)^{1-\beta} + (N-k)^{1-\beta
} \bigr)\\[-2pt ]
&\leq&
\frac{2^{2-\beta}N}{1-\beta}.
\end{eqnarray*}
\end{itemize}
The other cases ($\beta=1$ and $\beta>1$) are similar and left to the
reader. Lemma~\ref{lem:convsumRiem} is proved in the particular case
of $d=1$.

The case of higher dimension is nothing but a technical complication of
the previous case $d=1$. Let us fix $d>1$, $a=(a_{1}, \ldots,
a_{d})\in
\cD_{K}$ and denote by $j=(j_{1}, \ldots, j_{d})$ any element of $\bZ^{d}$.

Let us begin with the case where $a\notin\cD_{K}$. Let $(J_{1},
\ldots,
J_{d})$ the $d$ integers between $-N$ and $N$ such that for all $l=1,
\ldots, d$, $J_{l}\leq2a_{l}N \leq J_{l}+1$, with at least one
inequality that is strict. The coordinates $J_{l}$ and $J_{l}+1$ are by
construction the closest integers to $2a_{l}N$ in $-N, \ldots, N$. For
the rest of this proof, we will refer to them as \emph{critical}
coordinates. Then one can decompose the sum $\sum_{j} \| \frac{j}{2N}
-a \|^{-\beta}$ according to the number $p$ of critical coordinates among
$(j_{1}, \ldots, j_{d})=j$, where $j$ is a typical index,
%
%
\setcounter{equation}{0}
\begin{equation}
\label{eq:sumajp} \sum_{j} \biggl\| \frac{j}{2N} -a
\biggr\|^{-\beta}= \sum_{p=0}^{d} \sum
_{(i_{1},
\ldots, i_{p})}\sum_{j\in\cJ_{(i_{1}, \ldots, i_{p})}}\biggl\|
\frac
{j}{2N} -a \biggr\|^{-\beta},
\end{equation}
where the second sum is taken over all the vectors $(i_{1}, \ldots,
i_{p})$ with strictly increasing indices taken among $1, \ldots, d$ and
where $\cJ(i_{1}, \ldots, i_{p})$ is a notation for the set of vectors
$j=(j_{1}, \ldots, j_{d})$ such that $j_{i_{l}}$ is critical for every
$l=1, \ldots, p$.

In the sum \eqref{eq:sumajp}, let us treat the cases $p=0$ and $p>0$
separately. Let us first focus on the case $p=0$: it corresponds to
vectors $j$ without critical coordinates, which means that we restrict
ourselves to $j$ such that for every $k=1, \ldots, d$, either
$j_{k}<J_{k}$ (in such case $\llvert  j_{k} - 2a_{k}N \rrvert =
2a_{k}N - j_{k}$) or either $j_{k}>J_{k}+1$ (in such case $\llvert
j_{k} - 2a_{k}N \rrvert = j_{k}- 2a_{k}N$). In particular, this sum
can be divided into $2^{d}$ sums $\sum_{j\in D}\| \frac{j}{2N} -a \|
^{-\beta}$ where $D$ is a connected subdomain of $[-1/2, 1/2]^{d}$,
which is defined by this binary choice for each $j_{k}$. For
simplicity, we only treat the case of $D_{0}:= \{j=(j_{1}, \ldots,
j_{d}); \forall k=1, \ldots, d, j_{k}<J_{k}\}$. The case of the other
$2^{d}-1$ subdomains can be treated in a similar way.

We have successively,
%
%
\begin{eqnarray}
\sum_{j\in D_{0}}\biggl\| \frac{j}{2N} -a
\biggr\|^{-\beta}&=& 2^{\beta}N^{\beta
}\mathop{\sum
_{j_{k}<J_{k}-1}}\Biggl\llvert \sum_{l=1}^{d}
(2a_{l}N- j_{l})^{2} \Biggr\rrvert
^{-\beta/2}\hspace*{-35pt}
\\
&\leq&2^{\beta}N^{\beta}\int_{-N}^{J_{1}}
\cdots\int_{-N}^{J_{d}} \Biggl\llvert \sum
_{l=1}^{d} (2a_{l}N- t_{l})^{2}
\Biggr\rrvert ^{-\beta
/2}\,\mathrm{d} t_{1}\cdots
\mathrm{d}t_{d}\hspace*{-35pt}
\\
&=& 2^{\beta}N^{\beta}\int_{2a_{1}N - J_{1}}^{N+2a_{1}N}
\cdots\int_{2a_{d}N - J_{d}}^{N+2a_{d}N} \Biggl\llvert \sum
_{l=1}^{d} u_{l}^{2} \Biggr
\rrvert ^{-\beta/2}\,\mathrm{d}u_{1}\cdots\,\mathrm{d}u_{d}\hspace*{-35pt}
\\
&\leq& CN^{\beta}\int_{w_{N}}^{2N}
\frac{1}{r^{\beta}}r^{d-1}\,\mathrm{d}r,\hspace*{-35pt}
\end{eqnarray}
where $w_{N}>0$ is the distance to $0$ of the point of coordinates
$(2a_{1}N-J_{1},\ldots, 2a_{d}-J_{d})$. The estimates found in
Lemma~\ref{lem:convsumRiem} are then straightforward: for example, in
the case $\beta<d$, an upper bound for the last quantity is $C
N^{\beta
} N^{d-\beta}= CN^{d}$. The other cases are treated in the same manner
and lead to the same desired estimate.

As far as the case $0<p\leq d$ is concerned, the particular case $p=d$
is a bit special: it corresponds to vectors $j$ with only critical
coordinates. Since in this case, each $ \llvert \frac{j_{k}}{2N} -
a_{k}\rrvert $ is either equal to $\llvert \frac{J_{k}}{2N} -
a_{k} \rrvert $ or $\llvert \frac{J_{k} +1}{2N} - a_{k}
\rrvert $ and is anyway larger than $\rho_{N, K}\geq\frac{1}{2NK}$ (where
the quantity $\rho_{N, K}$ has been defined in the beginning of this
proof), the contribution of this case to the whole sum can be bounded
by $2^{d}\cdot\frac{1}{(d \rho_{N, K}^{2})^{\beta/2}}\leq\frac{2^{d}
2^{\beta}}{d^{\beta/2}}N^{\beta}K^{\beta}= C N^{\beta}K^{\beta}$.

Let us now concentrate on the case $0<p<d$:
Then for a fixed choice of indices $(i_{1}, \ldots, i_{p})$, we have
\begin{eqnarray*}
&&\sum_{j\in\cJ_{(i_{1}, \ldots, i_{p})}}\biggl\| \frac{j}{2N} -a \biggr\|
^{-\beta
}\\
&&\qquad=\sum_{j\in\cJ_{(i_{1}, \ldots, i_{p})}}\biggl\llvert \sum
_{i=i_{1},
\ldots, i_{p}} \biggl(\frac{j_{i}}{2N} -a_{i}
\biggr)^{2} + \sum_{i\neq
i_{1}, \ldots, i_{p}} \biggl(
\frac{j_{i}}{2N} -a_{i} \biggr)^{2} \biggr\rrvert
^{-\beta/2}
\\
&&\qquad\leq\sum_{j\in\cJ_{(i_{1}, \ldots, i_{p})}}\biggl\llvert \sum
_{i\neq
i_{1}, \ldots, i_{p}} \biggl(\frac{j_{i}}{2N} -a_{i}
\biggr)^{2} \biggr\rrvert ^{-\beta/2}.
\end{eqnarray*}
But this last sum is nothing else than $\sum_{\bar j}\| \frac{\bar
j}{2N} -\bar a \|^{-\beta}$, where $\bar a$ (resp., $\bar j$) is the
vector in $[-1, 1]^{d-p}$, built upon the vector $a$ (resp., $j$) with
all its coordinates of index in $\{i_{1}, \ldots, i_{p}\}$ removed.
Since $p>0$, we see that, by induction hypothesis, that the previous
sum can be bounded by
\[
\cases{ C N^{d-p} K^{d-p}\ln N, &\quad $\mbox{if }
\beta\leq d-p,$\vspace*{2pt}
\cr
C N^{\beta},& \quad$\mbox{if } \beta> d-p$. }
\]
In particular, if $\beta\geq d$, then the contribution to \eqref
{eq:sumajp} of the sum over $0<p<d$ can be bounded by $C N^{d-p}
K^{d-p}\ln N \leq\min (C K^{d} N^{d} \ln N, C N^{\beta} )$.
If $\beta<d$, it is also straightforward to see that this contribution
is also smaller than $CN^{d}K^{d}$. The proof of Lemma~\ref
{lem:convsumRiem} follows, by induction.
\end{pf*}
\end{appendix}
\section*{Acknowledgment}
We would like to thank the referee for useful comments and suggestions.
%
%



\printaddresses


\begin{thebibliography}{40}

\bibitem{Acebron2005}
%
\begin{barticle}[auto:STB|2014/02/12|12:18:25]
\bauthor{\bsnm{Acebr{\'o}n},~\bfnm{J.~A.}\binits{J.~A.}},
\bauthor{\bsnm{Bonilla},~\bfnm{L.~L.}\binits{L.~L.}},
\bauthor{\bsnm{P{\'e}rez Vicente},~\bfnm{C.~J.}\binits{C.~J.}},
\bauthor{\bsnm{Ritort},~\bfnm{F.}\binits{F.}} \AND
\bauthor{\bsnm{Spigler},~\bfnm{R.}\binits{R.}}
(\byear{2005}).
\btitle{The Kuramoto model: A simple paradigm for synchronization phenomena}.
\bjournal{Rev. Modern Phys.}
\bvolume{77}
\bpages{137--185}.
\end{barticle}
%
\bptok{imsref}%
\endbibitem

\bibitem{22657695}
%
\begin{barticle}[mr]
\bauthor{\bsnm{Baladron},~\bfnm{Javier}\binits{J.}},
\bauthor{\bsnm{Fasoli},~\bfnm{Diego}\binits{D.}},
\bauthor{\bsnm{Faugeras},~\bfnm{Olivier}\binits{O.}} \AND
\bauthor{\bsnm{Touboul},~\bfnm{Jonathan}\binits{J.}}
(\byear{2012}).
\btitle{Mean-field description and propagation of chaos in networks of
{H}odgkin--{H}uxley and {F}itz{H}ugh--{N}agumo neurons}.
\bjournal{J. Math. Neurosci.}
\bvolume{2}
\bpages{Art. 10, 50}.
\bid{doi={10.1186/2190-8567-2-10}, issn={2190-8567}, mr={2974499}}
\end{barticle}
%
\bptok{imsref}%
\endbibitem

\bibitem{1209.4537}
%
\begin{bmisc}[auto:STB|2014/02/12|12:18:25]
\bauthor{\bsnm{Bertini},~\bfnm{L.}\binits{L.}},
\bauthor{\bsnm{Giacomin},~\bfnm{G.}\binits{G.}} \AND
\bauthor{\bsnm{Poquet},~\bfnm{C.}\binits{C.}}
(\byear{2012}).
\bhowpublished{Synchronization and random long time
dynamics for mean-field plane rotators.
Available at \arxivurl{arXiv:1209.4537}.}
\end{bmisc}
%
\bptok{imsref}%
\endbibitem

\bibitem{MR2731396}
%
\begin{barticle}[mr]
\bauthor{\bsnm{Bolley},~\bfnm{Fran{\c{c}}ois}\binits{F.}},
\bauthor{\bsnm{Guillin},~\bfnm{Arnaud}\binits{A.}} \AND
\bauthor{\bsnm{Malrieu},~\bfnm{Florent}\binits{F.}}
(\byear{2010}).
\btitle{Trend to equilibrium and particle approximation
for a weakly selfconsistent {V}lasov--{F}okker--{P}lanck equation}.
\bjournal{M2AN Math. Model. Numer. Anal.}
\bvolume{44}
\bpages{867--884}.
\bid{doi={10.1051/m2an/2010045}, issn={0764-583X}, mr={2731396}}
\end{barticle}
%
\bptok{imsref}%
\endbibitem

\bibitem{MR2280433}
%
\begin{barticle}[mr]
\bauthor{\bsnm{Bolley},~\bfnm{Fran{\c{c}}ois}\binits{F.}},
\bauthor{\bsnm{Guillin},~\bfnm{Arnaud}\binits{A.}} \AND
\bauthor{\bsnm{Villani},~\bfnm{C{\'e}dric}\binits{C.}}
(\byear{2007}).
\btitle{Quantitative concentration inequalities for empirical measures
on non-compact spaces}.
\bjournal{Probab. Theory Related Fields}
\bvolume{137}
\bpages{541--593}.
\bid{doi={10.1007/s00440-006-0004-7}, issn={0178-8051}, mr={2280433}}
\end{barticle}
%
\bptok{imsref}%
\endbibitem

\bibitem{MR2834721}
%
\begin{barticle}[mr]
\bauthor{\bsnm{Bossy},~\bfnm{Mireille}\binits{M.}},
\bauthor{\bsnm{Jabir},~\bfnm{Jean-Fran{\c{c}}ois}\binits{J.-F.}}
\AND
\bauthor{\bsnm{Talay},~\bfnm{Denis}\binits{D.}}
(\byear{2011}).
\btitle{On conditional {M}c{K}ean {L}agrangian stochastic models}.
\bjournal{Probab. Theory Related Fields}
\bvolume{151}
\bpages{319--351}.
\bid{doi={10.1007/s00440-010-0301-z}, issn={0178-8051}, mr={2834721}}
\end{barticle}
%
\bptok{imsref}%
\endbibitem

\bibitem{MR1410117}
%
\begin{barticle}[mr]
\bauthor{\bsnm{Bossy},~\bfnm{Mireille}\binits{M.}} \AND
\bauthor{\bsnm{Talay},~\bfnm{Denis}\binits{D.}}
(\byear{1996}).
\btitle{Convergence rate for the approximation of the limit law of
weakly interacting particles: Application to the {B}urgers equation}.
\bjournal{Ann. Appl. Probab.}
\bvolume{6}
\bpages{818--861}.
\bid{doi={10.1214/aoap/1034968229}, issn={1050-5164}, mr={1410117}}
\end{barticle}
%
\bptok{imsref}%
\endbibitem

\bibitem{MR1840644}
%
\begin{bbook}[mr]
\bauthor{\bsnm{Cerrai},~\bfnm{Sandra}\binits{S.}}
(\byear{2001}).
\btitle{Second Order {PDE}'s in Finite and Infinite Dimension: A
Probabilistic Approach}.
\bseries{Lecture Notes in Math.}
\bvolume{1762}.
\bpublisher{Springer},
\blocation{Berlin}.
\bid{doi={10.1007/b80743}, mr={1840644}}
\end{bbook}
%
\bptok{imsref}%
\endbibitem

\bibitem{PhysRevE.82.016205}
%
\begin{barticle}[auto:STB|2014/02/12|12:18:25]
\bauthor{\bsnm{Chowdhury},~\bfnm{D.}\binits{D.}} \AND
\bauthor{\bsnm{Cross},~\bfnm{M.~C.}\binits{M.~C.}}
(\byear{2010}).
\btitle{Synchronization of oscillators with long-range power law interactions}.
\bjournal{Phys. Rev. E (3)}
\bvolume{82}
\bpages{016205}.
\end{barticle}
%
\bptok{imsref}%
\endbibitem

\bibitem{daiPra96}
%
\begin{barticle}[mr]
\bauthor{\bsnm{Dai Pra},~\bfnm{Paolo}\binits{P.}} \AND
\bauthor{\bsnm{den Hollander},~\bfnm{Frank}\binits{F.}}
(\byear{1996}).
\btitle{Mc{K}ean--{V}lasov limit for interacting random processes in
random media}.
\bjournal{J. Stat. Phys.}
\bvolume{84}
\bpages{735--772}.
\bid{doi={10.1007/BF02179656}, issn={0022-4715}, mr={1400186}}
\end{barticle}
%
\bptok{imsref}%
\endbibitem

\bibitem{doi10.108007362999808809576}
%
\begin{barticle}[mr]
\bauthor{\bsnm{Da Prato},~\bfnm{Giuseppe}\binits{G.}} \AND
\bauthor{\bsnm{Tubaro},~\bfnm{Luciano}\binits{L.}}
(\byear{1998}).
\btitle{Some remarks about backward {I}t\^o formula and applications}.
\bjournal{Stoch. Anal. Appl.}
\bvolume{16}
\bpages{993--1003}.
\bid{doi={10.1080/07362999808809576}, issn={0736-2994}, mr={1650287}}
\end{barticle}
%
\bptok{imsref}%
\endbibitem

\bibitem{1211.0299}
%
\begin{bmisc}[auto:STB|2014/02/12|12:18:25]
\bauthor{\bsnm{Delarue},~\bfnm{F.}\binits{F.}},
\bauthor{\bsnm{Inglis},~\bfnm{J.}\binits{J.}},
\bauthor{\bsnm{Rubenthaler},~\bfnm{S.}\binits{S.}} \AND
\bauthor{\bsnm{Tanr{\'e}},~\bfnm{E.}\binits{E.}}
(\byear{2012}).
\bhowpublished{Global solvability of a networked integrate-and-fire
model of McKean--Vlasov type.
Available at \arxivurl{arXiv:1211.0299}.}
\end{bmisc}
%
\bptok{imsref}%
\endbibitem

\bibitem{MR1741805}
%
\begin{barticle}[mr]
\bauthor{\bsnm{Del Moral},~\bfnm{P.}\binits{P.}} \AND
\bauthor{\bsnm{Miclo},~\bfnm{L.}\binits{L.}}
(\byear{2000}).
\btitle{A {M}oran particle system approximation of {F}eynman--{K}ac formulae}.
\bjournal{Stochastic Process. Appl.}
\bvolume{86}
\bpages{193--216}.
\bid{doi={10.1016/S0304-4149(99)00094-0}, issn={0304-4149}, mr={1741805}}
\end{barticle}
%
\bptok{imsref}%
\endbibitem

\bibitem{MR1932358}
%
\begin{bbook}[mr]
\bauthor{\bsnm{Dudley},~\bfnm{R.~M.}\binits{R.~M.}}
(\byear{2002}).
\btitle{Real Analysis and Probability}.
\bseries{Cambridge Studies in Advanced Mathematics}
\bvolume{74}.
\bpublisher{Cambridge Univ. Press},
\blocation{Cambridge}.
\bid{doi={10.1017/CBO9780511755347}, mr={1932358}}
\end{bbook}
%
\bptok{imsref}%
\endbibitem

\bibitem{MR2674516}
%
\begin{bbook}[mr]
\bauthor{\bsnm{Ermentrout},~\bfnm{G.~Bard}\binits{G.~B.}} \AND
\bauthor{\bsnm{Terman},~\bfnm{David~H.}\binits{D.~H.}}
(\byear{2010}).
\btitle{Mathematical Foundations of Neuroscience}.
\bseries{Interdisciplinary Applied Mathematics}
\bvolume{35}.
\bpublisher{Springer},
\blocation{New York}.
\bid{doi={10.1007/978-0-387-87708-2}, mr={2674516}}
\end{bbook}
%
\bptok{imsref}%
\endbibitem

\bibitem{Gartner}
%
\begin{barticle}[mr]
\bauthor{\bsnm{G{\"a}rtner},~\bfnm{J{\"u}rgen}\binits{J.}}
(\byear{1988}).
\btitle{On the {M}c{K}ean--{V}lasov limit for interacting diffusions}.
\bjournal{Math. Nachr.}
\bvolume{137}
\bpages{197--248}.
\bid{doi={10.1002/mana.19881370116}, issn={0025-584X}, mr={0968996}}
\end{barticle}
%
\bptok{imsref}%
\endbibitem

\bibitem{Gelfand1964}
%
\begin{bbook}[mr]
\bauthor{\bsnm{Gel'fand},~\bfnm{I.~M.}\binits{I.~M.}} \AND
\bauthor{\bsnm{Vilenkin},~\bfnm{N.~Y.}\binits{N.~Y.}}
(\byear{1964}).
\btitle{Generalized Functions. {V}ol. 4: Applications of Harmonic Analysis}.
\bpublisher{Academic Press},
\blocation{New York}.
\bid{mr={0435834}}
\bptnote{check year}%
\end{bbook}
%
\bptok{imsref}%
\endbibitem

\bibitem{GLP2011}
%
\begin{bmisc}[auto:STB|2014/02/12|12:18:25]
\bauthor{\bsnm{Giacomin},~\bfnm{G.}\binits{G.}},
\bauthor{\bsnm{Lu{\c{c}}on},~\bfnm{E.}\binits{E.}} \AND
\bauthor{\bsnm{Poquet},~\bfnm{C.}\binits{C.}}
(\byear{2011}).
\bhowpublished{Coherence stability and effect of random natural
frequencies in population of coupled oscillators.
Available at \arxivurl{arXiv:1111.3581}}.
\end{bmisc}
%
\bptok{imsref}%
\endbibitem

\bibitem{GPP2012}
%
\begin{barticle}[mr]
\bauthor{\bsnm{Giacomin},~\bfnm{Giambattista}\binits{G.}},
\bauthor{\bsnm{Pakdaman},~\bfnm{Khashayar}\binits{K.}} \AND
\bauthor{\bsnm{Pellegrin},~\bfnm{Xavier}\binits{X.}}
(\byear{2012}).
\btitle{Global attractor and asymptotic dynamics in the {K}uramoto
model for coupled noisy phase oscillators}.
\bjournal{Nonlinearity}
\bvolume{25}
\bpages{1247--1273}.
\bid{doi={10.1088/0951-7715/25/5/1247}, issn={0951-7715}, mr={2914138}}
\end{barticle}
%
\bptok{imsref}%
\endbibitem

\bibitem{PhysRevE.85.066201}
%
\begin{barticle}[auto:STB|2014/02/12|12:18:25]
\bauthor{\bsnm{Gupta},~\bfnm{S.}\binits{S.}},
\bauthor{\bsnm{Potters},~\bfnm{M.}\binits{M.}} \AND
\bauthor{\bsnm{Ruffo},~\bfnm{S.}\binits{S.}}
(\byear{2012}).
\btitle{One-dimensional lattice of oscillators coupled through
power-law interactions: Continuum limit and dynamics of spatial Fourier modes}.
\bjournal{Phys. Rev. E (3)}
\bvolume{85}
\bpages{066201}.
\end{barticle}
%
\bptok{imsref}%
\endbibitem

\bibitem{1209.6035}
%
\begin{bmisc}[auto:STB|2014/02/12|12:18:25]
\bauthor{\bsnm{Hairer},~\bfnm{M.}\binits{M.}},
\bauthor{\bsnm{Hutzenthaler},~\bfnm{M.}\binits{M.}} \AND
\bauthor{\bsnm{Jentzen},~\bfnm{A.}\binits{A.}}
(\byear{2012}).
\bhowpublished{Loss of regularity for Kolmogorov equations.
Available at \arxivurl{arXiv:1209.6035}}.
\end{bmisc}
%
\bptok{imsref}%
\endbibitem

\bibitem{Jourdain1998}
%
\begin{barticle}[mr]
\bauthor{\bsnm{Jourdain},~\bfnm{B.}\binits{B.}} \AND
\bauthor{\bsnm{M{\'e}l{\'e}ard},~\bfnm{S.}\binits{S.}}
(\byear{1998}).
\btitle{Propagation of chaos and fluctuations for a moderate model
with smooth initial data}.
\bjournal{Ann. Inst. Henri Poincar\'e Probab. Stat.}
\bvolume{34}
\bpages{727--766}.
\bid{doi={10.1016/S0246-0203(99)80002-8}, issn={0246-0203}, mr={1653393}}
\end{barticle}
%
\bptok{imsref}%
\endbibitem

\bibitem{MR1311478}
%
\begin{bbook}[mr]
\bauthor{\bsnm{Krylov},~\bfnm{N.~V.}\binits{N.~V.}}
(\byear{1995}).
\btitle{Introduction to the Theory of Diffusion Processes}.
\bseries{Translations of Mathematical Monographs}
\bvolume{142}.
\bpublisher{Amer. Math. Soc.},
\blocation{Providence, RI}.
\bid{mr={1311478}}
\end{bbook}
%
\bptok{imsref}%
\endbibitem

\bibitem{Lucon2011}
%
\begin{barticle}[mr]
\bauthor{\bsnm{Lu{\c{c}}on},~\bfnm{Eric}\binits{E.}}
(\byear{2011}).
\btitle{Quenched limits and fluctuations of the empirical measure for
plane rotators in random media}.
\bjournal{Electron. J. Probab.}
\bvolume{16}
\bpages{792--829}.
\bid{doi={10.1214/EJP.v16-874}, issn={1083-6489}, mr={2793244}}
\end{barticle}
%
\bptok{imsref}%
\endbibitem

\bibitem{Malrieu2003}
%
\begin{barticle}[mr]
\bauthor{\bsnm{Malrieu},~\bfnm{Florent}\binits{F.}}
(\byear{2003}).
\btitle{Convergence to equilibrium for granular media equations and
their {E}uler schemes}.
\bjournal{Ann. Appl. Probab.}
\bvolume{13}
\bpages{540--560}.
\bid{doi={10.1214/aoap/1050689593}, issn={1050-5164}, mr={1970276}}
\end{barticle}
%
\bptok{imsref}%
\endbibitem

\bibitem{PhysRevE.66.011109}
%
\begin{barticle}[auto:STB|2014/02/12|12:18:25]
\bauthor{\bsnm{Mar{\'o}di},~\bfnm{M.}\binits{M.}},
\bauthor{\bsnm{d'Ovidio},~\bfnm{F.}\binits{F.}} \AND
\bauthor{\bsnm{Vicsek},~\bfnm{T.}\binits{T.}}
(\byear{2002}).
\btitle{Synchronization of oscillators with long range interaction:
Phase transition and anomalous finite size effects}.
\bjournal{Phys. Rev. E (3)}
\bvolume{66}
\bpages{011109}.
\end{barticle}
%
\bptok{imsref}%
\endbibitem

\bibitem{McKean1967}
%
\begin{bincollection}[mr]
\bauthor{\bsnm{McKean},~\bfnm{H.~P.}\binits{H.~P.} \bsuffix{Jr.}}
(\byear{1967}).
\btitle{Propagation of chaos for a class of non-linear parabolic equations.}
In \bbooktitle{Stochastic {D}ifferential {E}quations ({L}ecture
{S}eries in {D}ifferential {E}quations, {S}ession 7, {C}atholic
{U}niv., 1967)}
\bpages{41--57}.
\bpublisher{Air Force Office Sci. Res.},
\blocation{Arlington, VA}.
\bid{mr={0233437}}
\end{bincollection}
%
\bptok{imsref}%
\endbibitem

\bibitem{Meleard1987}
%
\begin{barticle}[mr]
\bauthor{\bsnm{M{\'e}l{\'e}ard},~\bfnm{Sylvie}\binits{S.}} \AND
\bauthor{\bsnm{Roelly-Coppoletta},~\bfnm{Sylvie}\binits{S.}}
(\byear{1987}).
\btitle{A propagation of chaos result for a system of particles with
moderate interaction}.
\bjournal{Stochastic Process. Appl.}
\bvolume{26}
\bpages{317--332}.
\bid{doi={10.1016/0304-4149(87)90184-0}, issn={0304-4149}, mr={0923112}}
\end{barticle}
%
\bptok{imsref}%
\endbibitem

\bibitem{Oelsch1984}
%
\begin{barticle}[mr]
\bauthor{\bsnm{Oelschl{\"a}ger},~\bfnm{Karl}\binits{K.}}
(\byear{1984}).
\btitle{A martingale approach to the law of large numbers for weakly
interacting stochastic processes}.
\bjournal{Ann. Probab.}
\bvolume{12}
\bpages{458--479}.
\bid{issn={0091-1798}, mr={0735849}}
\end{barticle}
%
\bptok{imsref}%
\endbibitem

\bibitem{MR779460}
%
\begin{barticle}[mr]
\bauthor{\bsnm{Oelschl{\"a}ger},~\bfnm{Karl}\binits{K.}}
(\byear{1985}).
\btitle{A law of large numbers for moderately interacting diffusion processes}.
\bjournal{Z. Wahrsch. Verw. Gebiete}
\bvolume{69}
\bpages{279--322}.
\bid{doi={10.1007/BF02450284}, issn={0044-3719}, mr={0779460}}
\end{barticle}
%
\bptok{imsref}%
\endbibitem

\bibitem{PhysRevLett.106.234102}
%
\begin{barticle}[auto:STB|2014/02/12|12:18:25]
\bauthor{\bsnm{Omelchenko},~\bfnm{I.}\binits{I.}},
\bauthor{\bsnm{Maistrenko},~\bfnm{Y.}\binits{Y.}},
\bauthor{\bsnm{H{\"o}vel},~\bfnm{P.}\binits{P.}} \AND
\bauthor{\bsnm{Sch{\"o}ll},~\bfnm{E.}\binits{E.}}
(\byear{2011}).
\btitle{Loss of coherence in dynamical networks:
Spatial chaos and chimera states}.
\bjournal{Phys. Rev. Lett.}
\bvolume{106}
\bpages{234102}.
\end{barticle}
%
\bptok{imsref}%
\endbibitem

\bibitem{PhysRevE.85.026212}
%
\begin{barticle}[auto:STB|2014/02/12|12:18:25]
\bauthor{\bsnm{Omelchenko},~\bfnm{I.}\binits{I.}},
\bauthor{\bsnm{Riemenschneider},~\bfnm{B.}\binits{B.}},
\bauthor{\bsnm{H{\"o}vel},~\bfnm{P.}\binits{P.}},
\bauthor{\bsnm{Maistrenko},~\bfnm{Y.}\binits{Y.}} \AND
\bauthor{\bsnm{Sch{\"o}ll},~\bfnm{E.}\binits{E.}}
(\byear{2012}).
\btitle{Transition from spatial coherence to incoherence in
coupled chaotic systems}.
\bjournal{Phys. Rev. E (3)}
\bvolume{85}
\bpages{026212}.
\end{barticle}
%
\bptok{imsref}%
\endbibitem

\bibitem{PhysRevE.54.R2193}
%
\begin{barticle}[auto:STB|2014/02/12|12:18:25]
\bauthor{\bsnm{Rogers},~\bfnm{J.~L.}\binits{J.~L.}} \AND
\bauthor{\bsnm{Wille},~\bfnm{L.~T.}\binits{L.~T.}}
(\byear{1996}).
\btitle{Phase transitions in nonlinear oscillator chains}.
\bjournal{Phys. Rev. E (3)}
\bvolume{54}
\bpages{R2193--R2196}.
\end{barticle}
%
\bptok{imsref}%
\endbibitem

\bibitem{Strogatz1991}
%
\begin{barticle}[mr]
\bauthor{\bsnm{Strogatz},~\bfnm{Steven~H.}\binits{S.~H.}} \AND
\bauthor{\bsnm{Mirollo},~\bfnm{Renato~E.}\binits{R.~E.}}
(\byear{1991}).
\btitle{Stability of incoherence in a population of coupled oscillators}.
\bjournal{J. Stat. Phys.}
\bvolume{63}
\bpages{613--635}.
\bid{doi={10.1007/BF01029202}, issn={0022-4715}, mr={1115806}}
\end{barticle}
%
\bptok{imsref}%
\endbibitem

\bibitem{Sznit84}
%
\begin{barticle}[mr]
\bauthor{\bsnm{Sznitman},~\bfnm{Alain-Sol}\binits{A.-S.}}
(\byear{1984}).
\btitle{Nonlinear reflecting diffusion process, and the propagation of
chaos and fluctuations associated}.
\bjournal{J. Funct. Anal.}
\bvolume{56}
\bpages{311--336}.
\bid{doi={10.1016/0022-1236(84)90080-6}, issn={0022-1236}, mr={0743844}}
\end{barticle}
%
\bptok{imsref}%
\endbibitem

\bibitem{SznitSflour}
%
\begin{bincollection}[mr]
\bauthor{\bsnm{Sznitman},~\bfnm{Alain-Sol}\binits{A.-S.}}
(\byear{1991}).
\btitle{Topics in propagation of chaos}.
In \bbooktitle{\'{E}cole D'\'{E}t\'e de {P}robabilit\'es de
{S}aint-{F}lour {XIX}---1989}.
\bseries{Lecture Notes in Math.}
\bvolume{1464}
\bpages{165--251}.
\bpublisher{Springer},
\blocation{Berlin}.
\bid{doi={10.1007/BFb0085169}, mr={1108185}}
\end{bincollection}
%
\bptok{imsref}%
\endbibitem

\bibitem{1108.2414}
%
\begin{bmisc}[auto:STB|2014/02/12|12:18:25]
\bauthor{\bsnm{Touboul},~\bfnm{J.}\binits{J.}}
(\byear{2011}).
\bhowpublished{Propagation of chaos in neural fields.
Available at \arxivurl{arXiv:1108.2414}}.
\end{bmisc}
%
\bptok{imsref}%
\endbibitem

\bibitem{MR2998591}
%
\begin{barticle}[mr]
\bauthor{\bsnm{Touboul},~\bfnm{Jonathan}\binits{J.}}
(\byear{2012}).
\btitle{Limits and dynamics of stochastic neuronal networks with
random heterogeneous delays}.
\bjournal{J. Stat. Phys.}
\bvolume{149}
\bpages{569--597}.
\bid{doi={10.1007/s10955-012-0607-6}, issn={0022-4715}, mr={2998591}}
\end{barticle}
%
\bptok{imsref}%
\endbibitem

\bibitem{MR2459454}
%
\begin{bbook}[mr]
\bauthor{\bsnm{Villani},~\bfnm{C{\'e}dric}\binits{C.}}
(\byear{2009}).
\btitle{Optimal Transport: Old and New}.
\bseries{Grundlehren der Mathematischen Wissenschaften}
\bvolume{338}.
\bpublisher{Springer},
\blocation{Berlin}.
\bid{doi={10.1007/978-3-540-71050-9}, mr={2459454}}
\end{bbook}
%
\bptok{imsref}%
\endbibitem

\bibitem{PhysRevLett.110.118101}
%
\begin{barticle}[auto:STB|2014/02/12|12:18:25]
\bauthor{\bsnm{Wainrib},~\bfnm{G.}\binits{G.}} \AND
\bauthor{\bsnm{Touboul},~\bfnm{J.}\binits{J.}}
(\byear{2013}).
\btitle{Topological and dynamical complexity of random neural networks}.
\bjournal{Phys. Rev. Lett.}
\bvolume{110}
\bpages{118101}.
\end{barticle}
%
\bptok{imsref}%
\endbibitem

\end{thebibliography}
\end{document}